\newcolumntype{L}{>{$}l<{$}} 
\newcolumntype{C}{>{$}c<{$}}
\newtheorem{theorem}{Theorem}[section]
\newtheorem{lemma}[theorem]{Lemma}
\newtheorem{cor}[theorem]{Corollary}
\newtheorem{prop}[theorem]{Proposition}
\newtheorem{setup}[theorem]{Setup}
\theoremstyle{definition}
\newtheorem{definition}[theorem]{Definition}
\newtheorem{example}[theorem]{Example}
\newtheorem{obs}[theorem]{Observation}
\newtheorem{notation}[theorem]{Notation}
\theoremstyle{remark}
\newtheorem{remark}[theorem]{Remark}
\newtheorem{the context}[theorem]{The Context}
\newtheorem{question}[theorem]{Question}
\numberwithin{equation}{theorem}
\numberwithin{equation}{section}
\newcommand{\cat}[1]{\mathcal{#1}}
\newcommand{\rank}{\operatorname{rank}}
\newcommand{\grade}{\operatorname{grade}}
\newcommand{\im}{\operatorname{Im}}
\newcommand{\Ker}{\operatorname{Ker}}
\newcommand{\ass}{\operatorname{Ass}}
\newcommand{\supp}{\operatorname{Supp}}
\newcommand{\bbz}{\mathbb{Z}}
\newcommand{\bbp}{\mathbb{P}}
\renewcommand{\geq}{\geqslant}
\renewcommand{\leq}{\leqslant}
\renewcommand{\ker}{\Ker}
\renewcommand{\hom}{\Hom}
\newcommand{\Hom}{\operatorname{Hom}}
\newcommand{\maps}[5]{\xymatrix{#1 \ar[r]^-{#3} & #2 \\
#4 \ar@{|->}[r] & #5 \\}}
\newcommand{\mfa}{\mathfrak{a}}
\newcommand{\lcm}{\textrm{lcm}}
\def\w{\wedge}
\def\im{\operatorname{im}}
\newcommand{\sgn}{\operatorname{sgn}}
\newcommand{\po}{\operatorname{po}}
\newcommand{\cX}{\mathcal{X}}
\newcommand{\cP}{\mathcal{P}}
\newcommand{\ZZ}{\mathbb{Z}}
\newcommand{\clique}{\Delta^{\textrm{clique}}}
\renewcommand{\aa}{\textbf{a}}
\newcommand{\bb}{\textbf{b}}
\newcommand{\lin}{\operatorname{lin}}
\newcommand{\rain}{\operatorname{rain}}
\newcommand{\mdeg}{\operatorname{mdeg}}
\newcommand{\inn}{\textrm{in}}
\begin{document}
\title{Linear Strands Supported on Regular CW Complexes}

\author{Keller VandeBogert }
\date{\today}

\maketitle

\begin{abstract}
In this paper, we study ideals $I$ whose linear strand can be supported on a regular CW complex. We provide a sufficient condition for the linear strand of an arbitrary subideal of $I$ to remain supported on an easily described subcomplex. In particular, we prove that a certain class of rainbow monomial ideals always have linear strand supported on a regular CW complex, including any initial ideal of the ideal of maximal minors of a generic matrix.  We also provide a sufficient condition for these ideals to have linear resolution, which is also an equivalence under mild assumptions. We then employ a result of Almousa, Fl\o ystad, and Lohne to apply these results to polarizations of Artinian monomial ideals. We conclude with further questions relating to cellularity of certain classes of squarefree monomial ideals and the relationship between initial ideals of maximal minors and algebra structures on certain resolutions.
\end{abstract}

\section{Introduction}

A common theme in the study of monomial ideals is the association of some kind of combinatorial object $P$ that can recover the ideal of interest $I$. It is then often the case that properties desirable from a combinatorial perspective for $P$ translate into properties that are desirable from an algebraic perspective for $I$. This kind of correspondence has been used to great effect in, for instance, Stanley-Reisner theory, where any squarefree monomial ideal has a bijective correspondence with a simplicial complex. This correspondence yields nontrivial consequences, such as Hochster's formula or Reisner's criterion for Cohen-Macaulayness.

From a homological perspective, the object that best captures the behavior of a monomial ideal $I$ is a free resolution. Thus, a similar line of thinking suggests that the association of a free resolution $F_\bullet$ of $I$ to some combinatorial object $P$ will yield a dictionary for translating properties of $P$ into properties of $F_\bullet$. This was first explored by Bayer, Peeva, and Sturmfels \cite{bayer1998monomial}, where free resolutions that could be supported on simplicial complexes were studied. This means that the differential of $F_\bullet$ and the natural differential on the simplicial complex induce each other, and there is a bijective correspondence between basis elements in a given homological degree and faces of a given dimension. Examples of well-known complexes supported on simplicial complexes include the Taylor resolution and the Scarf complex. However, simplicial complexes turn out to be too restrictive in general to support more than the most well-behaved complexes.

This leads to the idea of considering \emph{cellular or CW resolutions}, that is, resolutions supported on so-called cell/regular CW complexes, introduced by Bayer and Sturmfels in \cite{bayer1998cellular}. Cellular and CW resolutions have turned out to occupy a ``goldilocks" zone of generality; they are general enough to support minimal free resolutions of many large classes of monomial ideals, including the well-known Eliahou-Kervaire resolution (see \cite{mermin2010eliaiiou}), but not so general as to be combinatorially incomprehensible. There are many classes of ideals whose minimal free resolutions can be supported on cell or regular CW complexes; see, for instance \cite{craw2012cellular}, \cite{batzies2002discrete}, \cite{floystad2009cellular}, \cite{dochtermann2012cellular}, \cite{dochtermann2012tropical}, \cite{dochtermann2014cellular}, or \cite{develin2007tropical}. 

It turns out that not \emph{all} monomial ideals have cellular resolutions; indeed, there are monomial ideals whose minimal free resolution cannot even be supported on a CW-complex (this is due to Velasco in \cite{velasco2008minimal}). Reiner and Welker (see \cite{reiner2001linear}) have also constructed a relatively simple monomial ideal with linear resolution, but such that no change of basis has differentials with coefficients equal to $0$ or $\pm 1$. Minimal free resolutions of arbitrary monomial ideals are not completely devoid of structure, however; a result of Clark and Tchernev (see \cite{clark2019minimal}) shows that every monomial ideal has minimal free resolution that can be supported on a so-called hcw-poset. 

In \cite{NR09}, a specific cell complex called the \emph{complex of boxes} was used to support what is now called the \emph{box polarization} of squarefree strongly stable ideals. In general, polarizations are a method of associating to any monomial ideal $I$ a squarefree monomial $\widetilde{I}$ in a larger polynomial ring that is homologically indistinguishable. There are \emph{many} different ways to polarize a monomial ideal; indeed, Almousa, Fl\o ystad, and Lohne in \cite{polarizations} describe all such polarizations for powers of the graded maximal ideal in terms of spanning trees of certain maximal ``down triangles" associated to the graph of linear syzygies. Moreover, they also show that the study of polarizations of an Artinian monomial ideals is equivalent via Alexander duality to the study of ideals generated by \emph{rainbow monomials} with linear resolution. Rainbow monomial ideals also appear as defining ideals for sets of points in multiprojective spaces, where each color class corresponds to a distinct copy of $\bbp^n$; see the work by Favacchio, Guardo, and Migliore \cite{favacchio2018arithmetically} or \cite{favacchio2019multiprojective} for rainbow monomials from this perspective.

In this paper, we use these connections as motivation for studying a proper subclass of all equigenerated rainbow monomial ideals, which we call \emph{rainbow determinantal facet ideals} (or rainbow DFIs). These ideals are inspired by \emph{determinantal facet ideals} (DFIs), a generalization of binomial edge ideals (see \cite{DES98}, \cite{ohtani2011}, \cite{herzog2010binomial}, and \cite{madani16-BEIsurvey}); the main difference is that rainbow DFIs depend on both a term order $<$ \emph{and} a parametrizing simplicial complex $\Delta$. We prove that the linear strand of \emph{any} rainbow DFI is supported on a regular CW complex; in particular, any rainbow DFI with linear resolution (hence Alexander dual to a polarization) has a CW resolution. Moreover, we formulate a sufficient condition for any rainbow DFI to have linear resolution; if the Alexander dual $\Delta^\vee$ of the associated simplicial complex has nonmaximal overlap, this becomes an equivalence.  

The techniques involved in proving these statements are of independent interest and involve the formulation of more general statements on the structure of the minimal free resolution of any initial ideal of the ideal of maximal minors, the construction of linear strands of arbitrary modules, and CW-ness of complexes whose associated poset is CW. Moreover, we examine conditions guaranteeing that a sequence that is regular on $R/I$ is also regular on $R/J$ for any fixed $J \subset I$; this is used for computing the graded Betti numbers of rainbow DFIs (and is later used to give a characterization of Cohen-Macaulayness). 

The paper is organized as follows. In Section \ref{sec:backgroundEN}, we first introduce some necessary background and notation relating to (rainbow) DFIs, the Eagon-Northcott complex, and arbitrary initial ideals of ideals of maximal minors. We perform a more detailed study of so-called \emph{sparse} Eagon-Northcott complexes, including an explicit description of the differentials and the multidegrees appearing in each homological degree.

In Section \ref{sec:linStrands}, we consider the construction of linear strands in general. We first prove, using the machinery of \emph{iterated trimming complexes} (introduced in \cite{vandebogert2020trimming}), that the linear strand of any subideal with $G(J) \subseteq G(I)$ is obtained by taking an appropriate subcomplex of the linear strand of $I$. We then apply this result to the case that the linear strands in question are supported on cellular or CW complexes; we introduce the notion of \emph{support chains} centered at a vertex $v$ of a regular CW complex to construct a morphism of complexes. Taking the kernel of this morphism of complexes will yield the linear strand upon deleting of the generator corresponding to the vertex $v$. In the case that the CW complex is \emph{linearly connected}, the resulting linear strand is obtained by simply restricting to the subcomplex induced by deleting $v$. 

In Section \ref{sec:latticesAndCell}, we study conditions guaranteeing that certain posets associated to a complex $F_\bullet$ give rise to a regular CW complex supporting $F_\bullet$. Using results of Bj\"orner, CW-ness can be checked by showing that the associated poset is CW. We use this result to prove that any initial ideal of the ideal of maximal minors has CW resolution. Even more generally, we combine this with the results of Section \ref{sec:linStrands} to prove that the linear strand of every rainbow DFI is supported on a regular CW complex.

In Section \ref{sec:regandLinearity}, we study conditions on the simplicial complex $\Delta$ associated to a rainbow DFI implying linearity of the minimal free resolution. This condition is formulated in terms of the existence of an ordering on the elements of $\Delta^\vee$ such that they form a so-called \emph{free sequence}; moreover, if elements of $\Delta^\vee$ have nonmaximal overlap, then every ordering yields a free-sequence and this condition is also an equivalence. In this latter situation, we also compute the Betti numbers explicitly; we already know by the result of Section \ref{sec:latticesAndCell} that these ideals have CW resolution.

Finally, in Section \ref{sec:polarizationsandConc}, we apply our results to polarizations of Artinian monomial ideals. Taking Alexander duals, all of the conditions guaranteeing linearity of Section \ref{sec:regandLinearity} directly translate to the property of being the polarization of an Artinian monomial ideal. Moreover, assuming that all elements of $\Delta^\vee$ have nonmaximal overlap, we show that rainbow DFIs are Alexander dual to a polarization of a power of the maximal ideal if and only if $|\Delta^\vee| = 0$. We conclude with a variety of questions: we first ask about whether or not the admission of an order for which the elements of $\Delta^\vee$ form a free sequence is a necessary condition for the associated rainbow DFI to have linear resolution. We also ask about the existence of term orders for which a pre-selected set of generators are free vertices of the regular CW complex supporting the resolution of the initial ideal of maximal minors. Lastly, we pose a curious potential connection between the selection of a multigraded algebra structure on the minimal resolution of the ideal of all squarefree monomials, and an arbitrary initial ideal of an ideal of maximal minors.

\section{Background and Sparse Eagon-Northcott Complexes}\label{sec:backgroundEN}

In this section, we introduce some notation related to (rainbow) determinantal facet ideals that will be in play for the rest of the paper. We then delve into so-called \emph{sparse} Eagon-Northcott complexes, which arise by homogenizing the classical Eagon-Northcott complex with respect to a variable $t$ and term order $<$, then setting $t=0$. It turns out that one can say \emph{precisely} what the differentials and multidegrees must look like for any given term order. These complexes will be used in later sections for proving cellularity/CW-ness and establishing criteria for having linear resolution. 

Throughout this paper, all complexes will be assumed to be nontrivial only in nonnegative homological degrees. 

\begin{notation}
Let $R = k[x_{ij} \mid 1 \leq i \leq n, \ 1 \leq j \leq m]$ be a polynomial ring over an arbitrary field $k$. Let $M$ be an $n \times m$ matrix of variables in $R$ where $n\leq m$.
For indices $\aa = \{a_1,\ldots, a_r\}$ and $\bb = \{b_1, \ldots, b_r\}$ such that $1\leq a_1 < \ldots < a_r\leq n$ and $1\leq b_1 < \cdots < b_r\leq m$, set
$$
[\aa\vert\bb] = [a_1,\ldots, a_r\vert b_1,\ldots, b_r] = \det \left( \begin{array}{ccc}
    x_{a_1,b_1} & \cdots & x_{a_1,b_r} \\
    \vdots & \ddots & \vdots\\
    x_{a_r,b_1} & \cdots & x_{a_r,b_r}\\
\end{array} \right)
$$
where $[\aa\vert\bb]=0$ if $r>n$. When $r=n$, use the simplified notation $[\aa]$ = $[1,\ldots, n\vert \aa]$. The ideal generated by the $r$-minors of $M$ is denoted $I_r(M)$.
\end{notation}

\begin{definition}\label{def:cliques}
For a simplicial complex $\Delta$ and an integer $i$, the $i$-th skeleton $\Delta^{(i)}$ of $\Delta$ is the subcomplex of $\Delta$ whose faces are those faces of $\Delta$ with dimension at most $i$. Let $\cat{S}$ denote the set of simplices $\Gamma$ with vertices in $[m]$ with $\dim (\Gamma) \geq n-1$ and $\Gamma^{(n-1)} \subset \Delta$. 

Let $\Gamma_1 , \dotsc , \Gamma_c$ be maximal elements in $\cat{S}$ with respect to inclusion, and let $\Delta_i := \Gamma^{(n-1)}_i$. Each $\Gamma_i$ is called a \emph{maximal clique}, and any induced subcomplex of $\Gamma_i$ is a \emph{clique}. The simplicial complex $\clique$ whose facets are the maximal cliques of $\Delta$ is called the \emph{clique complex} associated to $\Delta$. The decomposition $\Delta = \Delta_1 \cup \cdots \cup \Delta_c$ is called the \emph{maximal clique decomposition} of $\Delta$.
\end{definition}

\begin{definition}\label{def: DFI}
Let $\Delta$ be a pure $(n-1)$-dimensional simplicial complex on the vertex set $[m]$. Let $R = k[x_{ij} \mid 1 \leq i \leq n, \ 1 \leq j \leq m]$ be a polynomial ring over an arbitrary field $k$. Let $M$ be an $n \times m$ matrix of variables in $R$. The \emph{determinantal facet ideal} (or \emph{DFI}) $J_\Delta\subseteq R$ associated to $\Delta$ is the ideal generated by determinants of the form $[\aa ]$ where $\aa$ supports an $(n-1)$-facet of $\Delta$; that is, the columns of $[\aa ]$ correspond to the vertices of some facet of $ \Delta$.

Let $<$ be an arbitrary term order. The \emph{rainbow DFI} $\rain_< (J_\Delta)$ associated to $\Delta$ is the monomial ideal generated by all monomials of the form $\inn_< ([\aa ])$, where $\aa$ supports and $(n-1)$-facet of $\Delta$. 
\end{definition}

Observe that the maximal clique decomposition of the associated simplicial complex $\Delta$ is telling us the largest submatrices of $M$ from which \emph{all} minors are taken. That is, $J_\Delta = J_{\Delta_1} + \cdots + J_{\Delta_r}$.

\begin{notation}
Let $\Delta$ be a pure $(n-1)$-dimensional simplicial complex on the vertex set $[m]$. The notation $\Delta^\vee$ will denote the Alexander dual of $\Delta$; that is, $\Delta^\vee$ is the unique $(n-1)$-pure simplicial complex with $\Delta \cup \Delta^\vee$ equal to all $n$-subsets of $[m]$.
\end{notation}

Recall that the set of maximal minors forms a universal Gr\"obner basis for $I_n (M)$ by a result of Sturmfels and Zelevinsky (see \cite{SturmfelsZelevinsky93}). This immediately yields the following:

\begin{obs}
Let $<$ be an arbitrary term order. For any $(n-1)$-pure simplicial complex $\Delta$,
$$\rain_< (J_\Delta) + \rain_< (J_{\Delta^\vee} ) = \inn_< I_n (M).$$
\end{obs}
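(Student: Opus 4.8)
The plan is to show that both sides of the claimed equality are generated, as monomial ideals, by exactly the same set of initial forms of maximal minors, namely $\{\inn_<([\aa]) : \aa \in \binom{[m]}{n}\}$. The two ingredients are the definition of the Alexander dual recorded just above the statement and the universal Gr\"obner basis property of Sturmfels--Zelevinsky recalled immediately before it.

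First I would unpack the right-hand side. Since the maximal minors $\{[\aa]\}_{\aa}$ form a \emph{universal} Gr\"obner basis of $I_n(M)$, their initial forms generate the initial ideal for every term order; in particular, for our fixed $<$ we have $\inn_< I_n(M) = (\inn_<([\aa]) : \aa \in \binom{[m]}{n})$. This is the only place the hypothesis ``for an arbitrary term order'' enters, and it is precisely what the word \emph{universal} buys us: the generation of the initial ideal by initial forms of the $[\aa]$ is guaranteed no matter which $<$ we picked.

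Next I would unpack the left-hand side. By Definition \ref{def: DFI}, $\rain_<(J_\Delta)$ is the monomial ideal generated by $\{\inn_<([\aa]) : \aa \text{ a facet of } \Delta\}$, and likewise $\rain_<(J_{\Delta^\vee})$ by $\{\inn_<([\aa]) : \aa \text{ a facet of } \Delta^\vee\}$; here I would observe that, since $\Delta$ is pure of dimension $n-1$, its facets are exactly the $n$-subsets of $[m]$ lying in $\Delta$, so this indexing is unambiguous. By the defining property of the Alexander dual, every $n$-subset $\aa$ of $[m]$ is a facet of $\Delta$ or of $\Delta^\vee$. Hence the generating set of the sum $\rain_<(J_\Delta) + \rain_<(J_{\Delta^\vee})$ is $\{\inn_<([\aa]) : \aa \in \binom{[m]}{n}\}$, which coincides with the generators of $\inn_< I_n(M)$ produced in the previous step, and the equality of ideals follows.

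There is no serious obstacle here: the argument is essentially bookkeeping once the definitions are in hand, and the substance has been offloaded onto Sturmfels--Zelevinsky. If anything warrants care it is the combinatorial bookkeeping of the third paragraph --- that the facets of $\Delta$ and of $\Delta^\vee$ \emph{jointly} exhaust $\binom{[m]}{n}$ (rather than merely covering faces up to some skeleton) --- together with the harmless remark that the conclusion holds verbatim whether or not the two facet sets overlap, since we only need each monomial $\inn_<([\aa])$ to be contributed by at least one of the two summands.
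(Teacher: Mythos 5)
Your proof is correct and is exactly the argument the paper intends: the Observation is stated as an immediate consequence of the Sturmfels--Zelevinsky universal Gr\"obner basis property, combined with the paper's definition of $\Delta^\vee$ (the facets of $\Delta$ and $\Delta^\vee$ jointly exhaust all $n$-subsets of $[m]$). You have simply written out the bookkeeping that the paper leaves implicit, so there is nothing to correct.
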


Next we define the \emph{linear strand}, which is one of the central objects of study in this paper. These will be dealt with more intensively in Section \ref{sec:linStrands}.

\begin{definition}\label{def:linStrand}
Let $F_\bullet$ be a minimal graded $R$-free complex with $F_0$ having initial degree $d$. Then the \emph{linear strand} of $F_\bullet$, denoted $F_\bullet^{\lin}$, is the complex obtained by restricting $d_i^F$ to $(F_i)_{d+i}$ for each $i \geq 1$.
\end{definition}

\begin{remark}
Observe that the minimality assumption in Definition \ref{def:linStrand} ensures that the linear strand is well defined. Choosing bases, the linear strand can be obtained by restricting to the columns where only linear entries occur in the matrix representation of each differential.
\end{remark}

The following result, due to Boocher, shows that with respect to any term order $<$, the ideal $\inn_< I_n (M)$ specializes to the ideal of all squarefree monomials of degree $n$ in $m$ variables. 

\begin{theorem}[{\cite[Proof of Theorem 3.1]{boocher2012}}]\label{thm:boocherBnos}
For any term order $<$, the sequence of variable differences
    $$\{ x_{11} - x_{21} , \dots, x_{11} - x_{n1} \} \cup \cdots \cup \{ x_{1m} - x_{2m} , \dotsc , x_{1m} - x_{nm} \}$$
    forms a regular sequence on $R/ \inn_< I_n (M)$. In particular,
    $$\beta_{ij} (R / I_n(M)) = \beta_{ij} (R/ \inn_< I_n (M)) \ \textrm{for all} \ i,j.$$
\end{theorem}

The following definition is the definition of the well-known Eagon-Northcot complex. Recall that the Eagon-Northcott complex is a minimal free resolution of the quotient ring defined by the ideal of maximal minors $I_n (M)$; it will turn out to be the basis from which all resolutions of $R/\inn_< I_n (M)$ can be built.

\begin{definition}[Eagon-Northcott complex]\label{def:EN}
Let $\phi : F \to G$ be a homomorphism of free modules of ranks $n$ and $m$, respectively, with $n \geq m$. Let $c_\phi$ be the image of $\phi$ under the isomorphism $\hom_R (F,G) \xrightarrow{\cong} F^* \otimes G $. The \emph{Eagon-Northcott complex} is the complex
$$0 \to D_{m-n} (G^*) \otimes \bigwedge^m F \to D_{m-n-1} (G^*) \otimes \bigwedge^{m-1} F \to \cdots \to G^* \otimes \bigwedge^{n+1} F \to \bigwedge^n F \to \bigwedge^n G$$
with differentials in homological degree $\geq 2$ induced by multiplication by the element $c_\phi \in F^* \otimes G$, and the map $\bigwedge^g F \to \bigwedge^g G$ is $\bigwedge^g \phi$. 
\end{definition}

\begin{notation}\label{not:ENgrading}
Let $E_\bullet$ denote the Eagon-Northcott complex of Definition \ref{def:EN}. If $F$ has basis $f_1 , \dots , f_m$ and $G$ has basis $g_1 , \dots , g_n$, then define
$$g^{*(\alpha)} \otimes f_I := g_1^{*(\alpha_1)} \cdots g_n^{*(\alpha_n)} \otimes f_{i_1} \w \cdots \w f_{i_{n+\ell}},$$
where $\alpha = (\alpha_1 , \dots , \alpha_n)$ and $I = (i_1 < \cdots < i_{n+\ell})$. Observe that $E_\bullet$ inherits a $\ZZ^n \times \ZZ^m$-grading by setting
$$\mdeg (g^{* (\alpha)} \otimes f_I ) = (1 + \alpha_1 \epsilon_1 + \cdots + \alpha_n \epsilon_n , \epsilon_{i_1} + \cdots \epsilon_{i_{n+\ell}}),$$
where $\epsilon_k$ denotes the appropriately sized vector with $1$ in the $i$th spot and $0$ elsewhere, and $1$ denotes a length $n$ vector of $1$s.
\end{notation}

\begin{cor}\label{cor:Bnos0or1}
Let $F_\bullet$ denote a multigraded resolution of $\inn_< I_n(M)$. Then for every multidegree $\alpha$,
$$\beta_{\alpha} (R / \inn_< I_n(M)) \leq 1.$$
\end{cor}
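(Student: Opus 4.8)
The plan is to prove the stronger statement that a single multidegree supports at most one basis element of the resolution \emph{in total} (across all homological degrees), first in the coarse $\ZZ^n\times\ZZ^m$-grading of Notation~\ref{not:ENgrading} for $R/I_n(M)$, and then to push this through the Gr\"obner degeneration and refine it to the fine grading carried by the monomial quotient $R/\inn_< I_n(M)$.

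The engine is that the Eagon-Northcott complex $E_\bullet$ has all of its basis elements in pairwise distinct $\ZZ^n\times\ZZ^m$-multidegrees. Reading off the formula $\mdeg(g^{*(\alpha)}\otimes f_I)=(1+\alpha_1\epsilon_1+\cdots+\alpha_n\epsilon_n,\ \epsilon_{i_1}+\cdots+\epsilon_{i_{n+\ell}})$, the second coordinate is a $0$--$1$ vector whose support is exactly $I=(i_1<\cdots<i_{n+\ell})$, and the first coordinate recovers each $\alpha_k$ as its $k$-th entry minus $1$. Hence the assignment $(\alpha,I)\mapsto\mdeg(g^{*(\alpha)}\otimes f_I)$ is injective, even across different homological degrees. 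Since $E_\bullet$ is the minimal free resolution of $R/I_n(M)$, this gives $\sum_i\beta^{\ZZ^n\times\ZZ^m}_{i,\gamma}(R/I_n(M))\leq 1$ for every $\gamma\in\ZZ^n\times\ZZ^m$.

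To transfer this to the initial ideal, I would first record the compatibility that makes everything run: every monomial in the Laplace expansion of a maximal minor $[\aa]$ uses each of the $n$ rows and each of the columns $\aa$ exactly once, so it — and in particular the initial term $\inn_<[\aa]$ — has $\ZZ^n\times\ZZ^m$-multidegree $((1,\dots,1),\,\epsilon_{a_1}+\cdots+\epsilon_{a_n})$. Thus $\inn_< I_n(M)$ is $\ZZ^n\times\ZZ^m$-homogeneous, and the Gr\"obner degeneration from $I_n(M)$ to $\inn_< I_n(M)$ is flat within the $\ZZ^n\times\ZZ^m$-grading. Multigraded Betti numbers are upper semicontinuous along such a degeneration, so $\beta^{\ZZ^n\times\ZZ^m}_{i,\gamma}(R/I_n(M))\leq\beta^{\ZZ^n\times\ZZ^m}_{i,\gamma}(R/\inn_< I_n(M))$ for all $i,\gamma$; summing over all $\gamma$ of a fixed total degree recovers the $\ZZ$-graded Betti numbers $\beta_{ij}$, which coincide for the two ideals by Theorem~\ref{thm:boocherBnos}. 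Equality of the sums together with the termwise inequalities forces equality in every multidegree, whence $\sum_i\beta^{\ZZ^n\times\ZZ^m}_{i,\gamma}(R/\inn_< I_n(M))\leq 1$.

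Finally, the monomial quotient $R/\inn_< I_n(M)$ carries the fine $\ZZ^{nm}$-grading, which refines the coarse grading via $\pi\colon\ZZ^{nm}\to\ZZ^n\times\ZZ^m$, $e_{ij}\mapsto(\epsilon_i,\epsilon_j)$. Because a coarse-graded piece of $\tor_i$ is the direct sum of the fine pieces mapping into it, one has $\beta_{i,\alpha}(R/\inn_< I_n(M))\leq\beta^{\ZZ^n\times\ZZ^m}_{i,\pi(\alpha)}(R/\inn_< I_n(M))$; summing over $i$ yields $\beta_\alpha(R/\inn_< I_n(M))\leq 1$. I expect the real obstacle to be the middle step: Theorem~\ref{thm:boocherBnos} only supplies the total-degree identity, so upgrading it to a multidegree-by-multidegree identity is not automatic and genuinely relies on the $\ZZ^n\times\ZZ^m$-homogeneity of the degeneration so that semicontinuity can be invoked in the refined grading; one should also confirm that the corollary's intended grading is the fine one, since if only the $\ZZ^n\times\ZZ^m$-grading is meant the last paragraph is unnecessary.
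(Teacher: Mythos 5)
Your proof is correct, and it shares the paper's two bookends: the injectivity of the assignment $(\alpha,I)\mapsto\mdeg(g^{*(\alpha)}\otimes f_I)$ from Notation~\ref{not:ENgrading}, which gives at most one Eagon--Northcott basis element per $\ZZ^n\times\ZZ^m$-degree, and the final coarsening remark that any $\ZZ^{nm}$-graded resolution is in particular $\ZZ^n\times\ZZ^m$-graded (so yes, the fine grading is the intended one, and your last paragraph is genuinely needed -- the paper's own proof contains the identical step). Where you diverge is the transfer from $I_n(M)$ to $\inn_< I_n(M)$. The paper cites the \emph{proof} of Boocher's theorem rather than its statement: Boocher shows that a minimal free resolution of $R/\inn_< I_n(M)$ is obtained from $E_\bullet$ by setting some entries of its differentials to zero, so the resolution of the initial ideal has literally the same coarse-multigraded free modules as $E_\bullet$ and the bound is immediate. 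You instead use only the numerical content of Theorem~\ref{thm:boocherBnos} as quoted (equality of the $\ZZ$-graded Betti numbers $\beta_{ij}$), and recover coarse-degree-wise equality by combining that with upper semicontinuity of $\ZZ^n\times\ZZ^m$-graded Betti numbers along the Gr\"obner degeneration; your observation that the maximal minors -- a universal Gr\"obner basis -- are coarse-homogeneous, so that the one-parameter family $R[t]/\tilde{I}$ is itself $\ZZ^n\times\ZZ^m$-graded, is exactly what legitimizes that invocation, and the ``termwise $\leq$ plus equal finite sums forces termwise $=$'' counting step is sound. The trade-off: the paper's argument is three lines but leans on structural information inside Boocher's proof, while yours is self-contained relative to the theorem as stated in the paper, at the price of importing multigraded semicontinuity -- a standard fact, but one you should either cite or prove in two lines (it is the usual fiber-dimension semicontinuity applied to each coarse graded piece of the Koszul homology of the flat family, each such piece being a finitely generated free $k[t]$-module).
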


\begin{proof}
By Theorem \ref{thm:boocherBnos}, a minimal free resolution of $R/\inn_< I_n (M)$ may be obtained by setting some of the entries in the matrix representation of the differentials of Definition \ref{def:EN} equal to $0$. With respect to the $\ZZ^n \times \ZZ^m$-grading of the Eagon-Northcott complex $E_\bullet$, one has 
$$\beta_\alpha (R / \inn_< I_n (M)) \leq 1.$$
Since any $\ZZ^{nm}$-graded minimal free resolution is also $\ZZ^n \times \ZZ^m$-graded, the result follows.
\end{proof}

\begin{definition}
    Let $R = k[x_{ij} \mid 1 \leq i \leq n, \ 1 \leq j \leq m]$ be a polynomial ring over an arbitrary field $k$, endowed with some term order $<$. The \emph{sparse Eagon-Northcott} $E_\bullet^<$ complex is the complex obtained by:
    \begin{enumerate}
        \item homogenizing $E_\bullet$ with a new variable $t$, with respect to the term order $<$, then
        \item setting $t=0$ in the resolution.
    \end{enumerate}
\end{definition}

As observed by Boocher, the minimal free resolution of $R/\inn_< I_n (M)$ may be obtained as the sparse Eagon-Northcott complex $E_\bullet^<$.

\begin{definition}
    Let $F$ be a free $R$-module with some fixed basis $B \subset F$. The \emph{support} of an element $f = \sum_{b \in B} c_b b \in F$ with respect to the basis $B$ is defined to be:
    $$\supp_B (f) := \{ b \in B \mid c_b \neq 0 \},$$
    where $c_b \in R$. When the basis $B$ is understood, the notation $\supp (f)$ will be used instead.
\end{definition}

When dealing with the sparse Eagon-Northcott complex $E_\bullet^<$, the support of any element will be assumed to be with respect to the basis elements of Notation \ref{not:ENgrading}.

\begin{obs}
Let $<$ be any term order. Then the associated sparse Eagon-Northcott complex can be given a $\ZZ^{nm}$-multigrading. 
\end{obs}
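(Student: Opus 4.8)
The plan is to deduce the $\ZZ^{nm}$-multigrading from the identification, recalled above following Boocher, of $E_\bullet^<$ with the minimal free resolution of $R/\inn_< I_n(M)$. Since $\inn_< I_n(M)$ is a monomial ideal, the quotient $R/\inn_< I_n(M)$ is a finitely generated $\ZZ^{nm}$-graded module with respect to the standard fine grading $\mdeg(x_{ij}) = \epsilon_{ij}$. It is a standard fact that the minimal free resolution of such a module over the $\ZZ^{nm}$-graded ring $R$ admits an (essentially unique) $\ZZ^{nm}$-grading making every differential homogeneous; applying this to $E_\bullet^<$ gives the claim directly. This is the shortest route, and it is the one I would record as the proof.

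To make the grading concrete and to tie it to the $\ZZ^n \times \ZZ^m$-grading of Notation \ref{not:ENgrading}, I would also track how the sparse process acts on differential entries. In $E_\bullet$ the differentials in homological degree $\geq 2$ are induced by multiplication by $c_\phi \in F^* \otimes G$, so with respect to the basis of Notation \ref{not:ENgrading} their matrix entries are variables $x_{ij}$, already monomials; the sparse process leaves these entries monomial. The only non-monomial entries of $E_\bullet$ occur in the final map $\bigwedge^n \phi$, whose entries are the maximal minors $[\aa]$, and homogenizing with respect to $<$ and then setting $t=0$ replaces each such minor by its leading monomial $\inn_<([\aa])$ (a squarefree monomial selecting one factor from each row). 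Hence every entry of every differential of $E_\bullet^<$ is a monomial, and one may assign to each basis element a $\ZZ^{nm}$-multidegree refining its $\ZZ^n\times\ZZ^m$-multidegree: normalize the rank-one generator of $\bigwedge^n G$ in homological degree $0$ to degree $0$ and propagate multidegrees along the monomial entries of the differentials.

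The one point requiring care, and the only place a genuine obstacle could hide, is the well-definedness of this propagation, i.e.\ that the multidegree attached to a basis element is independent of which nonzero entries are used to reach it. This consistency is precisely the assertion of $\ZZ^{nm}$-homogeneity, and it is supplied for free by the first paragraph: because $E_\bullet^<$ is the minimal resolution of the $\ZZ^{nm}$-graded module $R/\inn_< I_n(M)$, its basis elements carry well-defined fine multidegrees and every differential is automatically homogeneous for them. Thus the substantive content reduces to the bookkeeping check that the sparse process leaves all differential entries monomial, after which the multigrading either follows from general multigraded resolution theory or is built explicitly by the propagation above.
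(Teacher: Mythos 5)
Your reduction of the problem to well-definedness of the propagation is the right instinct, but the way you discharge that step is circular and rests on a false principle. What general multigraded theory gives you is that $R/\inn_< I_n(M)$ has \emph{some} minimal free resolution admitting a $\ZZ^{nm}$-homogeneous basis, and that $E_\bullet^<$ is abstractly isomorphic to it. It does \emph{not} give you that the particular basis $g^{*(\alpha)}\otimes f_I$ of $E_\bullet^<$ is homogeneous for any such structure --- and that is exactly what the Observation must produce, since the explicit multidegree formula and Theorem \ref{thm:sparseEN} are statements about \emph{this} basis. Your sentence ``its basis elements carry well-defined fine multidegrees and every differential is automatically homogeneous for them'' conflates ``the complex admits a multigraded structure'' with ``the given basis is compatible with one,'' and the latter does not follow from the former, even in the presence of monomial differential entries. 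A concrete counterexample: let $I=(x,y)^2+(z,w)^2$ in $k[x,y,z,w]$, let $F_\bullet$ be the tensor product of the two standard resolutions (all entries monomial), and replace the second-syzygy basis element $\alpha$ with $d(\alpha)=y e_{x^2}-x e_{xy}$ by $\alpha'=\alpha+\beta$, where $d(\beta)=w e_{z^2}-z e_{zw}$. This elementary change of basis yields a minimal free resolution of the multigraded module $R/I$ in which every differential entry is still a monomial (the relevant rows and columns have disjoint supports), yet $\alpha'$ admits no consistent multidegree: propagating through $e_{x^2}$ forces $x^2y$, while propagating through $e_{z^2}$ forces $z^2w$. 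So exactness, minimality, multigradedness of the module, and monomiality of entries together still do not supply the consistency you need ``for free.''

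The consistency is a genuinely special feature of $E_\bullet^<$, and this is where the paper does its work: it assigns multidegrees inductively on homological degree (degree-one elements get $\inn_<([I])$, and a higher basis element gets the lcm of the multidegrees in the support of its differential image), with the compatibility of this assignment tied to the explicit form of the sparse differential, which is pinned down by the $d^2=0$ induction in Theorem \ref{thm:sparseEN}. Your approach can be repaired, but it needs an ingredient you gesture at and never use: the basis of $E_\bullet^<$ is $\ZZ^n\times\ZZ^m$-homogeneous, and by (the proof of) Corollary \ref{cor:Bnos0or1} each $\ZZ^n\times\ZZ^m$-degree supports at most one basis element; hence any abstract $\ZZ^{nm}$-graded structure on the minimal resolution must assign to each basis element --- the unique generator in its coarse degree --- a well-defined fine degree refining it, which rules out the mixing phenomenon in the counterexample above. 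Without that (or some equivalent use of the specific structure of $E_\bullet^<$), the key step of your argument is a gap, not a citation.
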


\begin{proof}
Assign the multidegrees inductively. In homological degree $1$, $f_I$ has multidegree $\inn_< ([I])$; for basis elements in higher homological degree, the multidegree is assigned as the lcm of the multidegrees appearing in the support of the image under the differential.
\end{proof}

\begin{notation}
Let $<$ be a term order. Every basis element of the sparse Eagon-Northcott complex $E_\bullet^<$ can be assigned a \emph{unique} multidegree
$$g^{*(\alpha)} \otimes f_I \xleftrightarrow{} \prod_{i=1}^n \Big( x_{i b_{|\alpha_{\leq i-1}| + i}} \cdots x_{i b_{|\alpha_{\leq i}|+i}} \Big), $$
where $I  = \{ b_1 , \dots , b_{n+\ell} \}$. For ease of notation, the above multidegree will be written
$$\prod_{i=1}^n \Big( x_{i b_{|\alpha_{\leq i-1}| + i}} \cdots x_{i b_{|\alpha_{\leq i}|+i}} \Big) = x_{1 \bb_1} \cdots x_{n \bb_n},$$
where $\bb_i = ( b_{|\alpha_{\leq i-1}| + i} , \dots , b_{|\alpha_{\leq i}|+i} )$.
\end{notation}

\begin{theorem}\label{thm:sparseEN}
    Let $<$ be any term order. Then the complex $E_\bullet^<$ has differentials of the form
    $$g^{* (\alpha)} \otimes f_I \mapsto \sum_{\substack{i \in \supp (\alpha) \\
    j \in \bb_i \\}} \sgn (j \in I) x_{ij} g^{* (\alpha - \epsilon_i)} \otimes f_{I \backslash j},$$
    where $\mdeg (g^{* (\alpha)} \otimes f_I ) = x_{1 \bb_1} \cdots x_{n \bb_n}$, and
    $$f_I \mapsto \inn_< ([I]).$$
\end{theorem}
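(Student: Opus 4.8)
The plan is to read off the sparse differential by tracking what the \emph{classical} Eagon--Northcott differential becomes under the two operations defining $E_\bullet^<$ (homogenize with $t$ relative to $<$, then set $t=0$). On $E_\bullet$ the degree-one map is $f_I \mapsto [I]$, and in homological degrees $\geq 2$ the differential is contraction against $c_\phi = \sum_{i,j} x_{ij}\, f_j^* \otimes g_i$. Since $g_i$ contracts a divided power via $g_i \cdot g^{*(\alpha)} = g^{*(\alpha-\epsilon_i)}$ and $f_j^*$ contracts an exterior monomial via $f_j^* \cdot f_I = \sgn(j \in I)\, f_{I\backslash j}$ (and $0$ when $j \notin I$), the classical differential is
$$g^{*(\alpha)} \otimes f_I \mapsto \sum_{i \in \supp(\alpha)} \sum_{j \in I} \sgn(j \in I)\, x_{ij}\, g^{*(\alpha - \epsilon_i)} \otimes f_{I \backslash j}.$$
Homogenizing relative to a weight vector $w$ representing $<$ multiplies the entry $x_{ij}$ sitting in the $(g^{*(\alpha-\epsilon_i)}\otimes f_{I\backslash j},\, g^{*(\alpha)}\otimes f_I)$ position by $t^{e}$, where $e = w\cdot\mdeg(g^{*(\alpha)}\otimes f_I) - w\cdot\big(x_{ij}\cdot\mdeg(g^{*(\alpha-\epsilon_i)}\otimes f_{I\backslash j})\big)$ equalizes source and target weights; passing to $t=0$ retains exactly the entries with $e=0$. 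By Boocher's observation the result is the minimal resolution of $R/\inn_< I_n(M)$, so the theorem reduces to identifying these entries.

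For the degree-one map this is immediate: homogenizing $[I]$ and setting $t=0$ returns the leading term, so $f_I \mapsto \inn_<([I])$, which is the $\alpha=0$ instance of the multidegree formula (all blocks $\bb_i$ singletons $\{b_i\}$). For higher degrees I first establish the closed form $\mdeg(g^{*(\alpha)}\otimes f_I) = x_{1\bb_1}\cdots x_{n\bb_n}$ by induction on homological degree, matching the inductive assignment (the least common multiple of the multidegrees appearing in the image). The base case is the leading-term computation just noted; the inductive step computes the relevant least common multiple using the classical differential together with the induction hypothesis. The structural point underlying everything is that this monomial is the $w$-largest monomial among all monomials sharing its $\ZZ^n\times\ZZ^m$-degree (those assigning each column of $I$ to a single row and giving row $i$ exactly $1+\alpha_i$ columns); this guarantees all exponents $e$ are nonnegative, so homogenization really is by powers of $t$.

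With the multidegrees in hand, a term $x_{ij}\, g^{*(\alpha-\epsilon_i)}\otimes f_{I\backslash j}$ survives to $t=0$ precisely when $e=0$, i.e.\ when $\mdeg(g^{*(\alpha)}\otimes f_I) = x_{ij}\cdot\mdeg(g^{*(\alpha-\epsilon_i)}\otimes f_{I\backslash j})$; since the assigned multidegree is the unique $w$-largest monomial of its degree class, this matching of weights forces matching of multidegrees. A direct check on the block decomposition shows that re-blocking $I\backslash j$ with sizes $\alpha_1+1,\dots,\alpha_i,\dots,\alpha_n+1$ leaves $\bb_k$ unchanged for $k\neq i$ and replaces $\bb_i$ by $\bb_i\backslash\{j\}$ exactly when $j\in\bb_i$; if instead $j\in I\backslash\bb_i$, shrinking block $i$ shifts a column into a different row, so the multidegree (hence the $w$-weight) fails to match and the entry acquires a positive power of $t$. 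This isolates precisely the terms with $j\in\bb_i$, yielding the asserted differential.

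The main obstacle is the second paragraph: proving that the assigned multidegree is the $w$-largest transversal in its $\ZZ^n\times\ZZ^m$-degree class, which is what makes the homogenization exponents nonnegative and simultaneously forces the off-diagonal terms to lose weight. This is the step where the term order genuinely enters — the block (``diagonal'') assignment must be the $<$-largest — and it is also the point that must be reconciled with the convention under which $\inn_<([I])$ is recorded in the form $x_{1\bb_1}\cdots x_{n\bb_n}$ (for a general $<$ the $w$-maximal transversal of a minor need not be the main diagonal). Once that maximality is in place, the surviving-term bookkeeping of the third paragraph is purely combinatorial.
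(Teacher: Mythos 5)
Your overall strategy (compute the homogenization directly and determine which entries of the Eagon--Northcott differential survive at $t=0$) is viable, but as written the argument has a genuine gap at exactly the point you flag yourself: the claim that the assigned multidegree $x_{1\bb_1}\cdots x_{n\bb_n}$ is the $w$-largest monomial among all monomials in its $\ZZ^n\times\ZZ^m$-degree class is asserted, never proven. Everything funnels through this claim: it is what makes the homogenization exponents $e$ nonnegative (so that homogenization really is by powers of $t$), what makes the homogenized degree of $g^{*(\alpha)}\otimes f_I$ well defined, and what licenses the third paragraph's step that ``matching of weights forces matching of multidegrees'' --- which additionally needs the weight vector $w$ to separate the finitely many distinct monomials in each degree class, a genericity point you also leave implicit. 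Since you describe this maximality as ``the main obstacle,'' the proposal is an outline with its central lemma missing rather than a proof. The gap is fillable: any monomial with row degrees $1+\alpha$ and column support $I$ (each column once) factors as $x_{ij}$ times a monomial with row degrees $1+\alpha-\epsilon_i$ and column support $I\setminus j$ for some $i\in\supp(\alpha)$, $j\in I$, and conversely; so the $w$-maximum of a class satisfies the same recursion as the inductively assigned degree, with base case $\inn_<([I])$ (the terms of $[I]$ being precisely the transversals on columns $I$). Carrying out that induction would also dissolve your worry about the consecutive-block convention, since the recursion, not the block picture, is what defines the $\bb_i$. But none of this is in the proposal.

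For comparison, the paper proves the theorem by a shorter route that avoids weight considerations entirely: it takes as given that $E_\bullet^<$ is a complex whose differential is obtained from the Eagon--Northcott differential by deleting some terms, and argues by induction on homological degree that no term $x_{ij}\,g^{*(\alpha-\epsilon_i)}\otimes f_{I\setminus j}$ with $j\in\bb_i$ can have been deleted: if one were missing, the composite $d_{\ell-1}\circ d_\ell$ would contain the term $x_{ij}x_{rs}\,g^{*(\alpha-\epsilon_r-\epsilon_i)}\otimes f_{I\setminus s,j}$ with nothing left to cancel it, contradicting $d^2=0$. Your approach, once the maximality lemma is actually supplied, would prove somewhat more (it characterizes the surviving entries intrinsically in terms of the weight order rather than inferring them from acyclicity), but in its current form the decisive step is missing.
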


\begin{proof}
This follows by induction on the homological degree, where the base case for $\ell=2$ is clear. Let $\ell > 2$; since $(E_\bullet^< , d_\bullet)$ is a complex, by definition $d_{\ell-1} \circ d_\ell = 0$. Assume for sake of contradiction that $\supp (d_\ell (g^{* (\alpha)} \otimes f_I )$ does not contain $g^{* (\alpha - \epsilon_i)} \otimes f_{I \backslash j}$ for some $i \in \supp (\alpha)$, $j \in \bb_i$. Let $r$ be any integer and $s \in \bb_r$ such that $g^{*(\alpha - \epsilon_r)} \otimes f_{I \backslash s } \in \supp (d_\ell (g^{* (\alpha)} \otimes f_I ))$; by the inductive hypothesis, $g^{*(\alpha - \epsilon_r- \epsilon_i)} \otimes f_{I \backslash s,j } \in \supp (g^{*(\alpha - \epsilon_r)} \otimes f_{I \backslash s })$. However, if $g^{* (\alpha - \epsilon_i)} \otimes f_{I \backslash j} \notin \supp (d_\ell (g^{* (\alpha)} \otimes f_I ))$, then the term $x_{ij} x_{rs} g^{*(\alpha - \epsilon_r- \epsilon_i)} \otimes f_{I \backslash s,j }$ does not cancel with any other term in the image $d_{\ell-1} \circ d_\ell ( g^{* (\alpha)} \otimes f_I)$, contradicting the fact that $E_\bullet^<$ is a complex.
\end{proof}

Theorem \ref{thm:sparseEN} immediately yields an explicit description of the multidegrees appearing in each homological degree. This result will be essential later for showing that the associated poset forms a meet semilattice.

\begin{cor}\label{cor:theBasisElts}
The multidegrees appearing in homological degree $\ell >0$ of the complex $E_\bullet^<$ may be constructed by choosing all degree $n+ \ell -1$ monomials $x_{1 \bb_1} \cdots x_{n \bb_n}$ with the property that
$$x_{1 b_{i_1}} x_{2 b_{i_2}} \cdots x_{n b_{i_n}} \in G (\inn_< I_n (M))$$
for all $b_{i_j} \in \bb_j$, $j=1 , \dots , n$. 
\end{cor}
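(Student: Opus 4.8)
The plan is to extract the characterization directly from the explicit description in Theorem \ref{thm:sparseEN}. By that theorem, a basis element $g^{*(\alpha)}\otimes f_I$ in homological degree $\ell$ carries the multidegree $x_{1\bb_1}\cdots x_{n\bb_n}$, a monomial of degree $n+\ell-1$ in which the block $\bb_i$ records the columns used in row $i$; moreover the differential sends $g^{*(\alpha)}\otimes f_I$ to a signed sum, with unit coefficients $\pm x_{ij}$, of the elements $g^{*(\alpha-\epsilon_i)}\otimes f_{I\backslash j}$ obtained by deleting a single column $j\in\bb_i$ from a nonempty block. In particular every term displayed in the differential is actually present, so one may repeatedly delete columns to travel from $g^{*(\alpha)}\otimes f_I$ down to any homological-degree-one generator $f_J$ with $J=\{b_{i_1},\dots,b_{i_n}\}$ a transversal, i.e.\ a choice of one column $b_{i_p}\in\bb_p$ from each block.

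First I would establish that every appearing multidegree satisfies the stated condition. Since $E_\bullet^<$ is multigraded and each differential is homogeneous of degree $0$, unwinding the deletions along a path to $f_J$ expresses the multidegree of $g^{*(\alpha)}\otimes f_I$ as $\big(\prod_{p}\prod_{c\in\bb_p\setminus\{b_{i_p}\}}x_{pc}\big)\cdot\inn_<([J])$, the product of the deleted variables with $\mdeg(f_J)=\inn_<([J])$. Taking the least common multiple of these expressions over all transversals $J$ must return the monomial $x_{1\bb_1}\cdots x_{n\bb_n}$, in which row $p$ uses exactly the columns $\bb_p$. This is possible only if, for each transversal, $\inn_<([J])$ is the \emph{diagonal} monomial $x_{1b_{i_1}}\cdots x_{nb_{i_n}}$ assigning column $b_{i_p}$ to row $p$ (any other leading monomial of $[J]$ would place a variable of some $\bb_q$ in a row $p\neq q$ and enlarge the lcm). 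Thus $x_{1b_{i_1}}\cdots x_{nb_{i_n}}\in G(\inn_< I_n(M))$ for every transversal, which is exactly the asserted condition.

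For the converse I would start from an arbitrary degree $n+\ell-1$ monomial written $x_{1\bb_1}\cdots x_{n\bb_n}$, with $\bb_p$ the columns occurring in row $p$, all of whose transversals lie in $G(\inn_< I_n(M))$. Each such generator is the initial term of a maximal minor, hence squarefree with $n$ distinct columns; were two blocks to share a column, a transversal selecting it twice would repeat a column, which is impossible, so the $\bb_p$ are pairwise disjoint. Listing the columns block by block then specifies an index set $I$ together with $\alpha_p=|\bb_p|-1$, and the transversal condition guarantees that the differential of Theorem \ref{thm:sparseEN} assembles the generators $\inn_<([J])$ into a bona fide basis element of multidegree $x_{1\bb_1}\cdots x_{n\bb_n}$. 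Hence the monomial appears, and combined with the previous paragraph (and Corollary \ref{cor:Bnos0or1}, which makes each multidegree occur at most once) this yields the stated description.

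The step I expect to be the main obstacle is the bookkeeping that links the combinatorial block monomial $x_{1\bb_1}\cdots x_{n\bb_n}$ to an honest basis element for an arbitrary term order. Concretely, one must verify that the internal ordering of the columns within $I$ --- which governs which permutation realizes $\inn_<([J])$ --- can be chosen consistently so that all transversals simultaneously have their identity-diagonal as leading term, and that the lcm computation above genuinely bottoms out at the homological-degree-one generators rather than accumulating spurious variables at an intermediate stage. Once the explicit differential of Theorem \ref{thm:sparseEN} is in hand these are matters of careful indexing, but they are where the real work lies.
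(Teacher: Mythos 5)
Your overall strategy is the same as the paper's --- the paper's entire proof is ``induction on the homological degree combined with the differential of Theorem \ref{thm:sparseEN}'' --- and your forward direction is a correct, fully unrolled version of that induction: following a chain of column--deletions down to homological degree one and using multihomogeneity of the differential shows that every transversal of an appearing multidegree equals $\inn_<([J])$ for the corresponding $J$, hence lies in $G(\inn_< I_n(M))$. (The detour through least common multiples is unnecessary: a single chain already gives $\inn_<([J]) = \mdeg\bigl(g^{*(\alpha)}\otimes f_I\bigr)\big/\prod_{p}\prod_{c\in\bb_p\setminus\{b_{i_p}\}}x_{pc} = x_{1b_{i_1}}\cdots x_{nb_{i_n}}$ directly.)

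The genuine gap is in your converse. The sentence claiming that ``the transversal condition guarantees that the differential of Theorem \ref{thm:sparseEN} assembles the generators $\inn_<([J])$ into a bona fide basis element of multidegree $x_{1\bb_1}\cdots x_{n\bb_n}$'' asserts exactly what has to be proved, namely that some basis element of $E_\bullet^<$ actually carries this multidegree. Your recipe --- sort the union of the blocks to get $I$ and set $\alpha_p = |\bb_p|-1$ --- does not obviously produce such an element, because the block decomposition that the sparse complex attaches to $g^{*(\alpha)}\otimes f_I$ is dictated by the term order and is in general \emph{not} the decomposition of the sorted set $I$ into consecutive intervals of sizes $\alpha_p+1$: for instance, in the right-hand complex of Figure 1 (where $\inn_<([13]) = x_{13}x_{21}$), the edge of multidegree $x_{13}x_{21}x_{22}$ has $\bb_1=\{3\}$ and $\bb_2=\{1,2\}$, which interleave. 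So, given a monomial all of whose transversals are generators, matching it to an honest basis element still requires an argument: either an induction realizing $\mu$ from the (inductively existing, and by Corollary \ref{cor:Bnos0or1} unique) basis elements of the multidegrees $\mu/x_{ij}$ one homological degree down, or a counting argument showing that the number of monomials of degree $n+\ell-1$ satisfying the transversal condition equals $\operatorname{rank} E_\ell^< = \binom{n+\ell-2}{\ell-1}\binom{m}{n+\ell-1}$, which is a nontrivial fact about the matching-field combinatorics of $\inn_< I_n(M)$. Your closing paragraph concedes that this is ``where the real work lies,'' and that is precisely the point: that work \emph{is} the reverse containment, so as written the proposal establishes only one of the two inclusions (in fairness, the paper's one-line proof is no more explicit about this direction either).
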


\begin{proof}
This follows by induction on the homological degree combined with the differential of Theorem \ref{thm:sparseEN}.
\end{proof}

To conclude this section, we record the following observations which will be used to prove that bounded intervals in the poset associated to the sparse Eagon-Northcott complexes are in fact CL-shellable.

\begin{obs}\label{obs:semimodOb}
Let $<$ be any term order. Let $1 \leq a_1 , \dots , a_n \leq m$ and set $\mu := x_{1a_1 } x_{2 a_2} \cdots x_{n a_n}$. Assume that $b$ and $c$ are integers such that
$$\frac{\mu x_{ib}}{x_{ia_i}} \quad \textrm{and} \quad \frac{\mu x_{jb}}{x_{j a_j}} \in G ( \inn_< I_n ( M)), \ \textrm{where} \ i \neq j.$$
Then $\frac{\mu x_{i b} x_{j c}}{x_{i a_i} x_{j a_j}} \in G( \inn_< I_n (M))$.
\end{obs}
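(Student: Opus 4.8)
The plan is to recast membership in $G(\inn_< I_n(M))$ as a statement about maximum‑weight matchings and then prove the conclusion by excluding improving alternating cycles. Since the maximal minors form a universal Gr\"obner basis (Sturmfels--Zelevinsky), for every $n$-subset $S\subseteq[m]$ the initial term $\inn_<([S])$ is a single squarefree monomial; choosing a generic weight vector $w=(w_{k\ell})$ that realizes $<$ on the finite set of degree-$n$ diagonals $x_{1\ell_1}\cdots x_{n\ell_n}$, this monomial is exactly the \emph{unique maximum-weight perfect matching} between the rows $[n]$ and the columns $S$, where a matching $\sigma\colon[n]\xrightarrow{\sim}S$ has weight $W(\sigma)=\sum_k w_{k,\sigma(k)}$. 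Thus a squarefree diagonal with distinct columns lies in $G(\inn_< I_n(M))$ if and only if its underlying assignment is weight-maximal on its column support. Under this dictionary each hypothesis, and the desired conclusion, becomes the assertion that an explicit assignment is weight-maximal on its support.

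First I would extract the ``local'' content of the two hypotheses. Testing maximality of each hypothesized generator against an arbitrary transposition of two column-assignments yields the $2\times 2$ comparisons: the common exchanged column beats each $a_k$ in the relevant row pairs, and the assignment $k\mapsto a_k$ is locally optimal on every pair of rows not involving the exchange. On the single pair $(i,j)$ the two hypotheses bracket the comparison from both sides through that common column, forcing $x_{ia_i}x_{ja_j}$ to be the leading term of the corresponding $2\times 2$ minor. Consequently every $2\times 2$ sub-diagonal of the target diagonal is leading, i.e.\ no transposition improves its weight; it remains to promote this to genuine maximality.

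The promotion is carried out by excluding improving \emph{alternating cycles} of length greater than two. Arguing by contradiction, suppose the target assignment is not maximal and choose an improving alternating cycle $C$ between it and the competing maximum matching; let $K$ be the set of rows it meets. If $K$ omits row $i$, then all columns used by $C$ lie in the column support of the first hypothesis and $C$ agrees with that hypothesis' assignment off $K$, so splicing the competing edges of $C$ into it produces a strictly heavier matching on that support --- contradicting maximality of the first hypothesis. Symmetrically, a cycle omitting row $j$ contradicts the second hypothesis. This disposes of every improving cycle meeting at most one of the two distinguished rows.

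The crux --- and the step I expect to be the main obstacle --- is the remaining case, where $C$ meets \emph{both} rows $i$ and $j$: then $C$ uses both columns $a_i$ and $a_j$, lies in neither hypothesis' support, and cannot be transplanted directly into either one. My plan here is a bridging construction through the common column: cut $C$ at the two distinguished rows into two alternating arcs and close each arc off through that shared column, producing one modified matching on each hypothesis' support that together reuse the improving edges of $C$ while paying only for two insertions of the shared column. A careful weight bookkeeping, combined with the maximality of \emph{both} hypotheses and the bracketing inequality from the second paragraph, should force the total improvement to be nonpositive, contradicting the positivity of the gain along $C$. Reconciling the two independent maximality hypotheses across a single cycle in this way is exactly the point at which the special combinatorics of the maximal-minor matching field (rather than an arbitrary weight) is indispensable; I would expect to double-check this case against the explicit differential of Theorem \ref{thm:sparseEN}, or to recast it as a Pl\"ucker-type exchange among the $n$-subsets of the $(n+1)$-column set obtained by adjoining the shared column to $\{a_1,\dots,a_n\}$.
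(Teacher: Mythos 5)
Your dictionary in the first paragraph is correct, and your third paragraph is essentially right about cycles meeting at most one distinguished row (though the roles are swapped: the target assignment agrees with the first hypothesis' assignment everywhere except row $j$, so it is a cycle avoiding row $j$ --- not row $i$ --- that can be spliced into the first hypothesis' support, and symmetrically for the second). But there are two genuine breaks. First, the end of your second paragraph is a non sequitur: bracketing through the common column yields only the comparison $w_{ia_i}+w_{ja_j}>w_{ia_j}+w_{ja_i}$, which is a statement about $\mu$; the $2\times 2$ comparison the \emph{target} needs on the row pair $(i,j)$ is $w_{ib}+w_{jc}>w_{ic}+w_{jb}$, and no combination of the hypotheses controls it, since the hypotheses never compare $b$ against $c$ in any row. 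Second, and decisively, the case you flag as the crux cannot be closed, because the implication is false there. Take $n=2$, $m=4$, $\mu=x_{11}x_{22}$, $i=1$, $j=2$, $b=3$, $c=4$, and any term order refining the weights $w_{11}=w_{22}=100$, $w_{14}=50$, $w_{23}=51$, $w_{13}=2$, $w_{24}=3$, $w_{12}=w_{21}=1$. Then $x_{11}x_{22}$, $x_{13}x_{22}$, $x_{11}x_{23}$, and $x_{11}x_{24}$ all lie in $G(\inn_< I_2(M))$, so the hypotheses hold whether one reads the second hypothesis as $\mu x_{jb}/x_{ja_j}$ (as printed) or as $\mu x_{jc}/x_{ja_j}$ (as the application in Lemma \ref{lem:CLshellable} requires), yet $\inn_<([3,4])=x_{14}x_{23}$, so the claimed generator $x_{13}x_{24}$ is \emph{not} in $G(\inn_< I_2(M))$. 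The improving cycle here is exactly the $2$-cycle trading columns $3$ and $4$ between rows $1$ and $2$, i.e.\ the configuration your bridging construction was meant to exclude; no weight bookkeeping can exclude it.

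For comparison, the paper's own proof is the Pl\"ucker-type exchange you gesture at in your last sentence: it takes the $(n+1)$-set $L=(\{a_1,\dots,a_n\}\setminus\{a_i\})\cup\{b,c\}$ and the relation $\sum_{\ell\in L}\sgn(\ell)\,x_{j\ell}[L\setminus\ell]=0$, and argues that the monomial $x_{jc}\cdot\inn_<([L\setminus c])$ can only cancel against the summand with coefficient $x_{ja_j}$. That cancellation shows the target monomial is \emph{a} term of the minor $[L\setminus a_j]$, but not that it is the \emph{initial} term, which is exactly the wall you hit: in the example above, $x_{13}x_{24}$ is a term of $[3,4]$ but not its lead term. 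What makes everything work in the one place the observation is used (Lemma \ref{lem:CLshellable}) is an ambient hypothesis absent from the statement: there the two covers lie in an interval $[\mu,\nu]$ bounded above by a valid multidegree $\nu$, and by Corollary \ref{cor:theBasisElts} every diagonal selection from a multidegree dividing $\nu$ is also a diagonal of $\nu$, hence a generator --- which supplies the missing $(b,c)$ comparison, and indeed the whole conclusion, at once. So the productive repair is to add that hypothesis (or prove the statement in the form actually needed there), not to refine the alternating-cycle analysis.
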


\begin{proof}
Let $L = \{ a_1 , \dots , \widehat{a_i} , b , \dots , a_j , c , \dots , a_n \}$ (it is not assumed that these are increasing), and consider the standard Eagon-Northcott relation
$$\sum_{\ell \in L } \sgn (\ell) x_{j \ell} [ L \backslash \ell ] = 0.$$
Taking lead terms in the above yields a relation on the initial ideal. Observe that after taking lead terms, the term $x_{jc} \cdot \frac{\mu x_{i b}}{x_{i a_i}}$ appears in the above sum. The only other term that could possibly cancel with this term is the term appearing with coefficient $x_{j a_j}$, which must be $\frac{\mu x_{i b} x_{j c}}{x_{i a_i} x_{j a_j}}$. 
\end{proof}

\begin{obs}\label{obs:AtomOrderobs}
Let $<$ be any term order. Let $1 \leq a_1 , \dots , a_n \leq m$ and $1 \leq b_1 , \dots , b_n \leq m$, and set
$$\mu = x_{1 a_1} \cdots x_{n a_n}, \qquad \nu = x_{1 b_1} \cdots x_{n b_n}.$$
Assume that $\lcm ( \mu , \nu)$ is a valid multidegree as in Corollary \ref{cor:theBasisElts}. Then there exists $1 \leq i \leq n$ such that $\frac{x_{i a_i} \nu}{x_{i b_i}} \in G( \inn_< I_n (M))$ \emph{and} $\frac{x_{i a_i} \nu}{x_{i b_i}} < \nu$. 
\end{obs}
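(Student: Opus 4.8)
The plan is to collapse the whole statement onto a single comparison of variables, using only the validity of $\lcm(\mu,\nu)$ and the multiplicative (cancellation) property of a term order. First I would dispose of the membership clause for free: the candidate $\frac{x_{i a_i}\nu}{x_{i b_i}}$ is exactly the transversal of $\lcm(\mu,\nu)=x_{1\bb_1}\cdots x_{n\bb_n}$ that selects column $a_i$ in row $i$ and column $b_j$ in every other row $j$. Since $a_i\in\bb_i$ and each $b_j\in\bb_j$, the assumed validity of $\lcm(\mu,\nu)$ together with Corollary \ref{cor:theBasisElts} immediately gives $\frac{x_{i a_i}\nu}{x_{i b_i}}\in G(\inn_< I_n(M))$ for \emph{every} $i$ with $a_i\neq b_i$. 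So the only genuine content of the observation is to produce one index $i$ for which $\frac{x_{i a_i}\nu}{x_{i b_i}}<\nu$.

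Next I would reduce that inequality to a comparison of single variables. Writing $\nu=(\nu/x_{i b_i})\,x_{i b_i}$ and $\frac{x_{i a_i}\nu}{x_{i b_i}}=(\nu/x_{i b_i})\,x_{i a_i}$ (both honest monomials, as $x_{i b_i}\mid\nu$), the two monomials share the factor $\nu/x_{i b_i}$, so cancellation for the term order $<$ yields $\frac{x_{i a_i}\nu}{x_{i b_i}}<\nu$ if and only if $x_{i a_i}<x_{i b_i}$. Hence, setting $S:=\{\,i:a_i\neq b_i\,\}$ (nonempty because $\mu\neq\nu$), it suffices to find some $i\in S$ with $x_{i a_i}<x_{i b_i}$.

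The crux is a contrapositive argument exploiting multiplicativity. Suppose instead that $x_{i a_i}>x_{i b_i}$ for \emph{all} $i\in S$. Because a term order satisfies $u<v\Rightarrow uw<vw$, and therefore $u_1<v_1,\ u_2<v_2\Rightarrow u_1u_2<v_1v_2$, multiplying these relations over $i\in S$ and then by the common factor $\prod_{i\notin S}x_{i a_i}$ (the rows where $a_i=b_i$) produces $\mu>\nu$. Consequently, as soon as $\mu<\nu$ — which is precisely the configuration in which this observation is invoked, with $\mu$ playing the role of the $<$-smaller atom (the $<$-minimal transversal of $\lcm(\mu,\nu)$) in the atom ordering used for CL-shellability — some $i\in S$ must satisfy $x_{i a_i}<x_{i b_i}$, and together with the first two paragraphs this completes the argument.

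I expect the main obstacle to be not the inequality bookkeeping, which is soft once multiplicativity is isolated, but rather pinning down the two structural points cleanly: verifying that the single-row swap is literally a transversal of $\lcm(\mu,\nu)$ so that Corollary \ref{cor:theBasisElts} applies, and making explicit that the operative hypothesis is $\mu<\nu$. Without the latter the descent toward $\mu$ need not decrease $\nu$ — already an $n=1$ example with $x_{11}>x_{12}$, $\mu=x_{11}$, $\nu=x_{12}$ shows the only available swap increases — so the ordering of $\mu$ and $\nu$ supplied by the atom order is what drives the result.
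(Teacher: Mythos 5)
Your proof is correct, and its skeleton matches the paper's: the membership claim is extracted, exactly as in the paper, from the transversal characterization of valid multidegrees in Corollary \ref{cor:theBasisElts} (every choice of one column from each row's block of $\lcm(\mu,\nu)$ lies in $G(\inn_< I_n(M))$, and your single-row swap is such a choice), so the whole content is the descent $\frac{x_{i a_i}\nu}{x_{i b_i}}<\nu$ for some $i$. Where you genuinely diverge is how that descent is produced. The paper represents $<$ by a weight vector $\omega$ (citing Sturmfels), rewrites $\mu<\nu$ as $e(\mu)\cdot\omega<e(\nu)\cdot\omega$, and splits the difference row by row, so the claim becomes ``a negative sum of integers has a negative summand.'' You instead stay entirely inside the axioms of a term order: cancellation reduces $\frac{x_{i a_i}\nu}{x_{i b_i}}<\nu$ to the single-variable comparison $x_{i a_i}<x_{i b_i}$, and the contrapositive via multiplicativity (if $x_{i a_i}>x_{i b_i}$ for every $i$ with $a_i\neq b_i$, then multiplying gives $\mu>\nu$) finishes the argument. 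These are the additive and multiplicative incarnations of the same decomposition; your version buys self-containedness, avoiding the weight-vector lemma (which in any case represents a term order only on finitely many monomials, a point the paper glosses over), while the paper's version buys a final step that is a one-line statement about integers. You are also right about the hidden hypothesis: the inequality $\mu<\nu$ appears nowhere in the statement, yet the paper's own proof invokes ``the assumption $\mu<\nu$'' mid-argument, and this is precisely how the observation is used in Lemma \ref{lem:CLshellable}, where $\mu_1<\mu_2$ are ordered atoms; your $n=1$ example correctly shows the statement is false without it.
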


\begin{proof}
Recall that any term order $<$ on $R$ may be obtained as the ordering corresponding to a weight vector $\omega \in \bbz^{nm}$ (see, for instance, \cite[Proposition 1.11]{sturmfels1996grobner}). Observe that if $\lcm ( \mu , \nu)$ is a valid multidegree as in Corollary \ref{cor:theBasisElts}, then for all $1 \leq i \leq n$, one has $\frac{x_{i a_i} \nu}{x_{i b_i}} \in G( \inn_< I_n (M))$. Thus it suffices to show that $i$ may be chosen such that $\frac{x_{i a_i} \nu}{x_{i b_i}} < \nu$.

In terms of the weight vector $\omega$, the assumption $\mu < \nu$ implies $e( \mu) \cdot \omega < e( \nu) \cdot \omega$ (where $e(\cdot)$ denotes the corresponding exponent vector); this means proving the statement is equivalent to proving the following:
\begin{center}
    If $p_1 + \cdots + p_\ell <0$ for some integers $p_i \in \bbz$, then $p_t< 0$ for some $1 \leq t \leq \ell$.
\end{center}
But this is trivially true, whence the statement follows.
\end{proof}

\section{On the Construction of Linear Strands}\label{sec:linStrands}

In this section, we consider the construction of linear strands. We first formulate a more general statement on constructing the linear strand of certain classes of ideals with the machinery of \emph{iterated trimming complexes}. With this more general result, we consider ideals whose linear strand is supported on a regular CW or cell complex. After developing a fair amount of machinery, this section culminates in Theorems \ref{thm:cellularityMainRes} and \ref{thm:disBnosCell}. These results show that ideals whose linear strand is supported on a \emph{linearly connected} cell/CW complex have the property that every subideal is supported on a cell/CW complex that is simple to describe. 

We begin this section with a general result on linear strands originally due to Herzog, Kiani, and Madani; intuitively, this says that a linear complex arises as a linear strand if and only if the homology is concentrated in sufficienly large degrees.

\begin{theorem}[\cite{herzog2015linear}, Theorem 1.1]\label{thm:linstrandequiv}
Let $R$ be a standard graded polynomial ring over a field $k$. Let $G_\bullet$ be a finite linear complex of free $R$-modules with initial degree $n$. Then the following are equivalent:
\begin{enumerate}
    \item The complex $G_\bullet$ is the linear strand of a finitely generated $R$-module with initial degree $n$.
    \item The homology $H_i (G_\bullet)_{i+n+j} = 0$ for all $i>0$ and $j=0, \ 1$.
\end{enumerate}
\end{theorem}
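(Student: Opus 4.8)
The plan is to treat the two implications separately, and in both to lean on the following elementary degree fact: over a standard graded polynomial ring the minimal graded free resolution $F_\bullet$ of a module $M$ of initial degree $n$ has all differentials with entries in $\m = R_+$, so $\beta_{ij}(M) = 0$ for $j < i+n$ and the linear strand $F_\bullet^{\lin}$ occupies the bottom summand $(F_i)_{j=i+n}$ in each homological degree. As in the Remark following Definition \ref{def:linStrand}, minimality also guarantees that $F_\bullet^{\lin}$ is genuinely a subcomplex: the degree-$(i+n)$ part of $d_i^F(G_i)$ cannot have unit entries, so it lands in $G_{i-1}$.

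For $(1)\Rightarrow(2)$, I would write $G_\bullet = F_\bullet^{\lin}$ for the minimal resolution $F_\bullet$ of such an $M$ and consider the short exact sequence of complexes $0 \to G_\bullet \to F_\bullet \to Q_\bullet \to 0$, where $Q_\bullet = F_\bullet/G_\bullet$ has $Q_i$ free and generated in degrees $\geq i+n+1$. Since $F_\bullet$ is acyclic in positive degrees, the long exact homology sequence collapses to connecting isomorphisms $H_{i+1}(Q_\bullet) \cong H_i(G_\bullet)$ for every $i \geq 1$, because both neighbouring terms $H_{i+1}(F_\bullet)$ and $H_i(F_\bullet)$ vanish. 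As $Q_{i+1}$ is generated in degrees $\geq i+n+2$, all of its cycles, hence $H_{i+1}(Q_\bullet)$, live in degrees $\geq i+n+2$; transporting this through the isomorphism shows $H_i(G_\bullet)$ is concentrated in degrees $\geq i+n+2$, which is precisely the asserted vanishing in the two degrees $i+n$ and $i+n+1$.

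For $(2)\Rightarrow(1)$, I would set $M := \coker(d_1^G) = H_0(G_\bullet)$. Since $G_0$ is generated in degree $n$ and $\im d_1^G$ sits in degrees $\geq n+1$, the module $M$ is generated in degree $n$ and has initial degree $n$, with $\rank G_0 = \dim_k M_n = \beta_{0,n}(M)$. The core construction is to complete $G_\bullet$ to a (possibly non-minimal) free resolution $\widetilde F_\bullet$ of $M$ by killing homology from the bottom up via iterated mapping cones, keeping careful track of internal degrees. Exactly as in the first part, hypothesis $(2)$ forces $H_i(G_\bullet)$ into degrees $\geq i+n+2$ for $i \geq 1$, so a minimal free module adjoined to kill $H_i$ is placed in homological degree $i+1$ and generated in degrees $\geq i+n+2 = (i+1)+n+1$. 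One checks that this degree bound is an invariant of the process: the only new cycles created by such an adjunction correspond to syzygies among the minimal generators of the homology being killed, which again sit two internal degrees above their homological degree. Since a mapping cone leaves the existing differentials untouched, the result is a resolution with $\widetilde F_k = G_k \oplus P_k$, where $P_k$ is generated in degrees $\geq k+n+1$ and $d_k|_{G_k} = d_k^G : G_k \to G_{k-1}$ has no component into any $P_{k-1}$. The process terminates by the Hilbert Syzygy Theorem.

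Finally I would minimalize $\widetilde F_\bullet$ to the minimal resolution $F_\bullet$ and conclude $F_\bullet^{\lin} = G_\bullet$. The decisive structural point is that, since $G_k$ maps only into $G_{k-1}$, every unit entry of the differential of $\widetilde F_\bullet$ lies in a column indexed by a correction generator; hence the Gaussian-elimination cancellations of minimalization only pair correction generators with one another, and by the usual update formula they leave the blocks $d_k^G$ — indeed every entry in a $G$-column — unchanged. The surviving correction generators remain in degrees $\geq k+n+1$ and $G_\bullet$ is already minimal, so the degree-$(k+n)$ part of $F_k$ is exactly $G_k$, giving $\beta_{k,k+n}(M) = \rank G_k$ and $G_\bullet = F_\bullet^{\lin}$. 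I expect the main obstacle to be precisely this bookkeeping in $(2)\Rightarrow(1)$: proving that the homology-killing corrections never descend below internal degree $k+n+1$ in homological degree $k$ (this is where both vanishing conditions $j=0,1$ are consumed) and that minimalization cannot cannibalize a strand generator. The forward direction, by contrast, is a short diagram chase once the quotient complex $Q_\bullet$ is in hand.
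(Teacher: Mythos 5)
First, a caveat: the paper contains no proof of Theorem \ref{thm:linstrandequiv} at all --- it is quoted from Herzog--Kiani--Madani \cite{herzog2015linear} and used as a black box --- so your argument can only be measured against the published proof, not against anything internal to this paper.

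On its merits, your proposal is correct. The implication (1)$\Rightarrow$(2) via $0 \to G_\bullet \to F_\bullet \to Q_\bullet \to 0$ and the connecting isomorphisms $H_{i+1}(Q_\bullet) \cong H_i(G_\bullet)$ is complete as written. For (2)$\Rightarrow$(1), your strategy works, but the two items you call bookkeeping are where all the content lives, and each needs one more argument than you supply (both go through). For the invariant: writing $C$ for the complex built through homological degree $k$, with $C_j = G_j \oplus P_j$ and $P_j$ generated in degrees $\geq j+n+1$, the precise claim is that $Z_k(C)/d^G(G_{k+1})$ is generated in degrees $\geq k+n+2$. In degree $k+n$ this follows from the $j=0$ hypothesis together with $(P_k)_{k+n}=0$. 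In degree $k+n+1$, a cycle $(g,p)$ has $p = \sum_j c_j \epsilon_j$ a \emph{constant}-coefficient combination of the degree-$(k+n+1)$ generators of $P_k$, and the cycle condition gives $\sum_j c_j d(\epsilon_j) = -d^G(g) \in B_{k-1}(G)$; since the classes of the $d(\epsilon_j)$ were chosen as \emph{minimal} generators of $Z_{k-1}(C)/B_{k-1}(G)$, this forces every $c_j = 0$, and then $g \in Z_k(G)_{k+n+1} = B_k(G)_{k+n+1}$ by the $j=1$ hypothesis. Note that minimality of the chosen generating sets must be carried along as part of the induction --- the principle that relations sit strictly one degree higher fails for non-minimal generating sets. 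For the minimalization: knowing that units lie only in correction \emph{columns} does not by itself imply that cancellations pair correction generators with one another; you also need that no unit occupies a $G$-\emph{row}, which is again a degree count (an entry from a $P_k$-generator, degree $\geq k+n+1$, to a $G_{k-1}$-generator, degree $k+n-1$, has degree $\geq 2$) and is preserved under the update formula because updates are degree-preserving. With these two points spelled out, your proof closes.

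For comparison, there is a shorter route to (2)$\Rightarrow$(1), in the spirit of \cite{herzog2015linear}, that avoids non-minimal resolutions entirely: set $M = \operatorname{Coker} d_1^G$, let $F_\bullet$ be its \emph{minimal} free resolution, and prove by induction on $i$ that the syzygy module $\Omega_{i+1} = \ker d_i^F$ vanishes in degree $i+n$, equals $d^G\big((G_{i+1})_{i+n+1}\big)$ in degree $i+n+1$, and that $d^G$ is injective on $(G_{i+1})_{i+n+1}$ --- the same three applications of the hypotheses as in your argument (the last one uses $H_{i+1}(G)_{(i+1)+n}=0$), but with no Gaussian elimination to track, since one identifies $G_\bullet$ inside $F_\bullet$ directly. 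Your route is longer on that account, but it has the genuine advantage of constructing a resolution by hand from $G_\bullet$ without ever needing to know the minimal resolution in advance.
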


We will temporarily adopt the following general setup for the next two results. In this setup, we are choosing specific generators of a given ideal $I$ that we wish to trim off, then ``rescale" by some new ideal. Theorem \ref{itres} shows how a (generally nonminimal) resolution can be computed for this new ideal based on the free resolutions of all other ideals involved.

\begin{setup}\label{setup4}
Let $R$ be a standard graded polynomial ring over a field $k$. Let $I \subseteq R$ be a homogeneous ideal and $(F_\bullet, d_\bullet)$ denote a homogeneous free resolution of $R/I$. 

Write $F_1 = F_1' \oplus \Big( \bigoplus_{i=1}^t Re_0^i \Big)$, where, for each $i=1, \dotsc , t$, $e^i_0$ generates a free direct summand of $F_1$. Using the isomorphism
$$\hom_R (F_2 , F_1 ) = \hom_R (F_2,F_1') \oplus \Big( \bigoplus_{i=1}^t \hom_R (F_2 , Re^i_0) \Big)$$
write $d_2 = d_2' + d_0^1 + \cdots + d^t_0$, where $d_2' \in \hom_R (F_2,F_1')$ and $d^i_0 \in \hom_R (F_2 , Re^i_0)$. 

For each $i=1, \dotsc , t$, let $\mfa_i$ denote any homogeneous ideal with
$$d^i_0 (F_2) \subseteq \mfa_i e^i_0,$$
and $(G^i_\bullet , m^i_\bullet)$ be a homogeneous free resolution of $R/\mfa_i$. 

Use the notation $K' := \im (d_1|_{F_1'} : F_1' \to R)$, $K^i_0 := \im (d_1|_{Re^i_0} : Re^i_0 \to R)$, and let $J := K' + \mfa_1 \cdot K^1_0+ \cdots + \mfa_t \cdot K_0^t$.
\end{setup}

\begin{theorem}\label{itres}
Adopt notation and hypotheses as in Setup \ref{setup4}. Then there is a morphism of complexes
\begin{equation}\label{itcomx}
\xymatrix{\cdots \ar[r]^{d_{k+1}} &  F_{k} \ar[dd]^{Q_{k-1}}\ar[r]^{d_{k}} & \cdots \ar[r]^{d_3} & F_2 \ar[rrrr]^{d_2'} \ar[dd]^{Q_1} &&&& F_1' \ar[dd]^{d_1} \\
&&&&&&& \\
\cdots \ar[r]^-{\bigoplus m^i_k} & \bigoplus_{i=1}^t G^i_{k-1} \ar[r]^-{\bigoplus m^i_{k-1}} & \cdots \ar[r]^-{\bigoplus m^i_2} & \bigoplus_{i=1}^t G^i_1 \ar[rrrr]^-{-\sum_{i=1}^t m^i_1(-)\cdot d_1(e^i_0)} &&&& R. \\}\end{equation}
Moreover, the mapping cone of \ref{itcomx} is a free resolution of $R/J$.
\end{theorem}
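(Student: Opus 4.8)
The plan is to read \eqref{itcomx} as a genuine morphism of complexes $Q\colon A_\bullet \to B_\bullet$ and to prove the statement by feeding $Q$ into the long exact homology sequence of its mapping cone. Index the upper row as $A_\bullet$ with $A_0 = F_1'$ and $A_k = F_{k+1}$ for $k \ge 1$ (differentials $d_2'$ and $d_{k+1}$), and the lower row as $B_\bullet$ with $B_0 = R$ and $B_k = \bigoplus_{i} G^i_k$ for $k \ge 1$ (differentials $\bigoplus_i m^i_k$ and $\psi := -\sum_i m^i_1(-)\cdot d_1(e^i_0)$); then each $Q_k \colon A_k \to B_k$ is degree-preserving and $\cone(Q)$ is precisely the complex whose acyclicity is asserted, with $\cone(Q)_0 = R$, $\cone(Q)_1 = F_1' \oplus \bigoplus_i G^i_1$, and $\cone(Q)_n = F_n \oplus \bigoplus_i G^i_n$ for $n \ge 2$. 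Both rows are honest complexes: $d_2' d_3 = \pi d_2 d_3 = 0$ for the projection $\pi\colon F_1 \to F_1'$, and $\psi\circ(\bigoplus_i m^i_2) = 0$ since each $m^i_1 m^i_2 = 0$.

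First I would construct $Q$. Writing $y_i := d_1(e^i_0)$ and $d^i_0 = c^i(-)\,e^i_0$ for a map $c^i\colon F_2 \to \mfa_i$ (well defined since $d^i_0(F_2)\subseteq \mfa_i e^i_0$), set $Q_0 := d_1|_{F_1'}$. Because each $m^i_1\colon G^i_1 \to \mfa_i$ is onto and $F_2$ is free, the $c^i$ lift to a map $Q_1 = (Q^i_1)_i\colon F_2 \to \bigoplus_i G^i_1$ with $m^i_1 Q^i_1 = c^i$; the lower-left square commutes because $d_1 d_2' = -\sum_i c^i(-)\,y_i = \psi\circ Q_1$, which is exactly the relation extracted from $d_1 d_2 = 0$. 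The higher $Q_k$ are produced by the usual comparison-map lifting; the one nonautomatic point is the first extension, for which one checks $c^i\circ d_3 = 0$ (a consequence of $d_2 d_3 = 0$), so that $Q_1 d_3$ lands in $\bigoplus_i \ker m^i_1 = \im(\bigoplus_i m^i_2)$ and can be lifted. All later obstructions land in $\ker m^i_{k-1} = \im m^i_k$ by exactness of the $G^i_\bullet$.

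Next I would compute homology. Since each $G^i_\bullet$ resolves $R/\mfa_i$, one gets $\HH_n(B_\bullet) = 0$ for $n \ge 2$, $\HH_0(B_\bullet) = R/\sum_i \mfa_i y_i$, and $\HH_1(B_\bullet) \cong \{(a_i)_i \in \bigoplus_i \mfa_i : \sum_i a_i y_i = 0\}$. Since $F_\bullet$ is a resolution, $\HH_n(A_\bullet) = \HH_{n+1}(F_\bullet) = 0$ for $n \ge 2$, while $\HH_0(A_\bullet) = \coker d_2'$ and $\HH_1(A_\bullet)\cong \{(c^i(x))_i : x \in \ker d_2'\}\subseteq \bigoplus_i \mfa_i$. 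Feeding these into
\[ \cdots \to \HH_n(A_\bullet)\xrightarrow{\alpha_n}\HH_n(B_\bullet)\to \HH_n(\cone(Q))\to \HH_{n-1}(A_\bullet)\xrightarrow{\alpha_{n-1}}\HH_{n-1}(B_\bullet)\to\cdots \]
reduces everything to the behavior of $\alpha_n := \HH_n(Q)$ in low degrees: $\HH_n(\cone(Q)) = 0$ for $n \ge 1$ will follow once $\alpha_1$ is an isomorphism and $\alpha_0$ is injective (the degrees $n \ge 2$ being automatic, as both sides vanish), after which $\HH_0(\cone(Q)) = \coker\alpha_0 = R/(K' + \sum_i \mfa_i y_i) = R/J$.

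The crux — and the step I expect to be the main obstacle — is that $A_\bullet$ and $B_\bullet$ each carry nonzero $\HH_1$, so the cone is \emph{not} acyclic for the trivial reason that one factor is a resolution; I must match these $\HH_1$'s precisely. Tracing definitions, $\alpha_1$ sends the class of $x \in \ker d_2'$ to $(c^i(x))_i$, so under the identifications above it is the inclusion of $\{(c^i(x))_i : x \in \ker d_2'\}$ into $\{(a_i)_i : \sum_i a_i y_i = 0\}$; it is visibly injective, and surjectivity is where exactness of $F_\bullet$ at $F_1$ enters. Given $a_i \in \mfa_i$ with $\sum_i a_i y_i = 0$, the element $\sum_i a_i e^i_0$ lies in $\ker d_1 = \im d_2$, so equals $d_2(x)$ for some $x$; comparing components gives $d_2'(x) = 0$ and $c^i(x) = a_i$, proving $\alpha_1$ onto. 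The same mechanism gives $\ker\alpha_0 = 0$: if $a \in F_1'$ has $d_1(a) = \sum_i b_i y_i$ with $b_i \in \mfa_i$, then $a - \sum_i b_i e^i_0 \in \ker d_1 = \im d_2$, so applying $\pi$ to a preimage shows $a \in \im d_2'$. With $\alpha_1$ an isomorphism and $\alpha_0$ injective, the long exact sequence then forces $\cone(Q)$ to be a free resolution of $R/J$.
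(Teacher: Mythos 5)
Your proof is correct, but a point of order first: this paper never proves Theorem \ref{itres} at all. It is imported machinery, quoted from \cite{vandebogert2020trimming}, and is followed immediately by Definition \ref{def:ittrimcx}; so there is no internal proof to compare yours against, and your argument has to stand on its own. It does. The construction of the comparison maps is right: $Q_1$ exists because $F_2$ is free and $m^i_1$ surjects onto $\mfa_i$; the bottom square commutes because $d_1 d_2 = 0$ forces $d_1 d_2' = -\sum_i c^i(-)\, y_i$, matching the sign in \ref{itcomx}; and you correctly isolate the one nonformal lifting obstruction, namely $c^i \circ d_3 = 0$, which indeed falls out of the $Re^i_0$-component of $d_2 d_3 = 0$ (this same identity is also what makes your description of $\HH_1(A_\bullet)$ well defined). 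The homology bookkeeping likewise checks out: $\HH_1(B_\bullet) \cong \{(a_i)_i \in \bigoplus_i \mfa_i \mid \sum_i a_i y_i = 0\}$ because $m^i_1$ surjects onto $\mfa_i$ with $\ker m^i_1 = \im m^i_2$; $\HH_1(A_\bullet) \cong \{(c^i(x))_i \mid x \in \ker d_2'\}$ because $\ker d_2' \cap \bigcap_i \ker c^i = \ker d_2 = \im d_3$; surjectivity of $\alpha_1$ and injectivity of $\alpha_0$ both reduce, exactly as you say, to $\ker d_1 = \im d_2$ together with reading off components in $F_1 = F_1' \oplus \bigl( \bigoplus_i Re^i_0 \bigr)$; and $\coker \alpha_0 = R/(K' + \sum_i \mfa_i y_i) = R/J$ since $K^i_0 = (y_i)$.

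One small misstatement, which costs you nothing: the parenthetical claim that vanishing of $\HH_n(\cone(Q))$ for $n \geq 2$ is ``automatic, as both sides vanish'' is only true for $n \geq 3$. For $n = 2$ the relevant stretch of the long exact sequence is $0 = \HH_2(B_\bullet) \to \HH_2(\cone(Q)) \to \HH_1(A_\bullet) \xrightarrow{\alpha_1} \HH_1(B_\bullet)$, so $\HH_2(\cone(Q)) = \ker \alpha_1$, and degree $2$ genuinely uses the injectivity of $\alpha_1$ (which, as you note, is the easy half, since under your identifications $\alpha_1$ is an inclusion). Since you prove $\alpha_1$ is an isomorphism anyway, this is a mislabeling of which degree consumes which input, not a gap.
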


\begin{definition}\label{def:ittrimcx}
The \emph{iterated trimming complex} associated to the data of Setup \ref{setup4} is the complex of Theorem \ref{itres}.
\end{definition}

Restricting the diagram \ref{itcomx} to the linear strands of all complexes involved, one immediately obtains the following:

\begin{cor}\label{cor:howtogetLinStrand}
    Adopt notation and hypotheses as in Setup \ref{setup4}. Assume that the complexes $F_\bullet$, $G_\bullet^j$ ($j=1, \dots , t$) are minimal and that $I$ has initial degree $\ell$. For $i > 1$, let 
    $$C_i := \ker \Big( (Q_{i-1})_{i+\ell} : (F_i)_{i+\ell} \to \bigoplus_{j=1}^t (G_i^j)_{i+\ell} \Big).$$ 
    Then the complex
    $$\cdots \xrightarrow{d_{k+1}} C_k \xrightarrow{d_k} \cdots \xrightarrow[]{d_3} C_2 \xrightarrow[]{d_2'} F_1' \to 0$$
    is the linear strand of $J$.
\end{cor}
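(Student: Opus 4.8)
The plan is to extract the linear strand directly from the (generally nonminimal) resolution of $R/J$ furnished by Theorem \ref{itres}, namely the mapping cone of the morphism \ref{itcomx}. Since $F_\bullet$ resolves $R/I$ with $I$ of initial degree $\ell$, the mapping cone has $R$ in homological degree $0$, the generators $F_1' \oplus \bigoplus_j G_1^j$ in homological degree $1$, and $F_i \oplus \bigoplus_j G_i^j$ in homological degree $i \geq 2$; after the standard shift this is a resolution of $J$, and the linear strand of $J$ is what survives in the lowest relevant internal degree in each homological position. First I would record the chain-map identities coming from \ref{itcomx}, i.e. $Q_{i-2} \circ d_i = (\bigoplus_j m^j_{i-1}) \circ Q_{i-1}$, since these are exactly what make the proposed complex a complex: if $x \in C_i = \ker(Q_{i-1})$ in the relevant degree, then $Q_{i-2}(d_i x) = (\bigoplus_j m^j_{i-1})(Q_{i-1} x) = 0$, so $d_i$ restricts to a map $C_i \to C_{i-1}$ and $d_2'$ restricts to $C_2 \to F_1'$.

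The heart of the argument is a degree count controlling the contribution of the $G^j$ summands. Because each $G^j_\bullet$ minimally resolves $R/\mfa_j$ with $\mfa_j$ generated in positive degree, and is placed into the mapping cone with an internal shift by the degree of the trimmed generator $d_1(e_0^j)$ (which is at least $\ell$), the summand $G^j_i$ sits in internal degrees bounded below by $\ell + i$. I would show that this bound forces all $G^j$-summands to vanish in the internal degree computing the linear strand of $J$ in each homological position, \emph{except} in a single borderline degree, where $G^j_{i-1}$ contributes its minimal generators. Thus the only interaction between the $F$- and $G$-parts that the linear strand can see is the map $Q_{i-1}$ sending $F_i$ into these borderline generators of $\bigoplus_j G^j_{i-1}$.

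The key observation is then that in this borderline degree both the source $(F_i)$ and the target $\bigoplus_j (G^j_{i-1})$ are concentrated in a single internal degree, so $Q_{i-1}$ between them is given by a scalar (constant) matrix. Consequently the only unit entries in the mapping-cone differential visible in these internal degrees are exactly those of $Q_{i-1}$, since $d_i$ and each $m^j_i$ are minimal. Minimizing the mapping cone in this range therefore cancels precisely the image of $Q_{i-1}$ against the borderline $G^j_{i-1}$-generators, and what remains in homological position $i$ is the kernel $C_i = \ker(Q_{i-1})$, with induced differential the restriction of $d_i$ (respectively $d_2'$). This identifies $\cdots \to C_k \to \cdots \to C_2 \xrightarrow{d_2'} F_1' \to 0$ as the linear part of a minimal resolution of $J$; invoking Theorem \ref{thm:linstrandequiv} (or uniqueness of the linear strand, Definition \ref{def:linStrand}) then confirms it \emph{is} the linear strand.

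The step I expect to be the main obstacle is the bookkeeping in the two preceding paragraphs: pinning down the internal degrees precisely enough that the $G^j$-summands vanish outside the single borderline degree, and verifying rigorously that the scalar matrix $Q_{i-1}$ is the \emph{only} source of nonminimality in that range, so that cancellation leaves exactly $\ker(Q_{i-1})$ and nothing further collapses. Everything else—the chain-map identities, the reduction of the $G$-interaction to a single $Q$-map, and the final appeal to Theorem \ref{thm:linstrandequiv}—is then routine.
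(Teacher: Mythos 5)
Your proposal is correct and is essentially the paper's own argument: the paper disposes of this corollary in one line (restrict the diagram of Theorem \ref{itres} to the linear strands of all the complexes involved), and your degree count — the twisted $G^j$-summands are invisible in the linear degrees except for their borderline minimal generators, every unit entry of the mapping-cone differential comes from the $Q$-maps since $d_\bullet$ and the $m^j_\bullet$ are minimal, and cancelling those units leaves exactly $\ker Q_{i-1}$ with induced differential the restriction of $d_i$ — is precisely the justification that one line compresses. A small simplification: your closing appeal to Theorem \ref{thm:linstrandequiv} is unnecessary, since once the complex is exhibited as the linear part of a minimal free resolution of $J$, it is the linear strand by Definition \ref{def:linStrand}.
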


Observe that if one chooses $\mfa_i := (K' : K_0^i)$ in Setup \ref{setup4}, the the mapping cone of \ref{itcomx} will be a (necessarily nonminimal) resolution of $R/K'$. This fact is what will allow us to construct linear strands for subideals of a given ideal $K$.

For the remainder of this section, we will transition into dealing with cell/CW complexes supporting linear complexes. We will have to build up a decent amount of machinery before constructing a morphism of complexes whose kernel can recover the linear strand of certain subideals. In some of the literature, the terms cell complex or CW complex are used interchangeably; we will only use cellular to refer to \emph{polyhedral cell complexes}, and CW will mean \emph{regular CW-complex}. We begin with the definition of a polyhedral cell complex; for more precise details, see \cite[Chapter 6.2]{BH98-CMrings} or \cite[Chapter 4]{miller2004combinatorial}. 

\begin{definition}\label{def: cellComplex} A \emph{polyhedral cell complex} $\cP$ is a finite collection of convex polytopes (called \emph{cells} or \emph{faces} of $\cP$) in some Euclidean space, satisfying the following two properties:
\begin{itemize}
    \item if $H$ is a polytope in $\cP$, then every face of $H$ also lies in $\cP$, and
    \item if $H_i, H_j$ are both in $\cP$, then $H_i\cap H_j$ is a face of both $H_i$ and $H_j$.
\end{itemize}
Denote by $V(\cP)$ the set of vertices (or $0$-dimensional cells) of $\cP$.
If $\cX\subseteq V(\cP)$, the \emph{induced subcomplex} of $\cP$ on $\cX$ is the subcomplex $\{F\in \cP \mid V(F)\subseteq \cX \}$.
The $f$-vector of a $d$-dimensional polyhedral cell complex $\cP$ is the vector $(f_0,f_1,\dots, f_{d} )$, where $f_i$ is the number of $i$-dimensional cells of $\cP$.
\end{definition}

For the definition of a regular CW-complex, the reader is referred to \cite{hatcher2005algebraic}. Alternatively, one can define an object to be a regular CW complex if its associated face poset os a CW poset, as in Definition \ref{def:CWposet}. 

Let $\cP$ denote a cell/CW complex supporting a linear complex. Let $v$ be a vertex of $\cP$ and let $N_v$ denote the neighbors of $v$; that is, all vertices connected to $v$ in the graph of linear syzygies ($v$ is not considered a neighbor of itself). The incidence function associated to any given cell/CW complex will be denoted by $\epsilon$.

Lemmas \ref{lem:injCorr} and \ref{lem:existenceLem} will be essential for establishing the existence so-called support chains centered at a vertex (see Definition \ref{def:suppChain}). For the remainder of this section, the results will be stated only for regular CW complexes with the understanding that they hold for polyhedral cell complexes as well.

\begin{lemma}\label{lem:injCorr}
Let $\cP$ be a CW complex supporting a linear complex. Then the correspondence
$$P \mapsto V(P)$$
is injective.
\end{lemma}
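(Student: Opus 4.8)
The plan is to use linearity to pin down the multidegree and dimension of a cell from its vertex set, and then to derive a contradiction from the homology vanishing supplied by Theorem \ref{thm:linstrandequiv}. First I would record the \emph{reduction}: for any cell $P$ the monomial label satisfies $\mdeg(P) = \lcm\{\mdeg(v) : v \in V(P)\}$, so $V(P)$ determines $\mdeg(P)$; and since $\cP$ supports a \emph{linear} complex, an $i$-dimensional cell carries a label of degree $d+i$ (with $d$ the initial degree of Definition \ref{def:linStrand}), whence $\dim P = \deg\mdeg(P) - d$ is also determined by $V(P)$. Thus if $V(P) = V(Q)$ then $\dim P = \dim Q =: i$ and $\mdeg P = \mdeg Q =: \mathbf{a}$. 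Suppose toward a contradiction that $P \ne Q$; note $i \ge 1$, since a $0$-cell equals its own vertex set.

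For the \emph{homological setup} I would fix the multidegree $\mathbf{a}$ and pass to the graded strand $(G_\bullet)_{\mathbf{a}}$ of the cellular complex. Because $\cP$ is regular all incidence numbers are $\pm 1$, so in the evident bases this graded strand is precisely the reduced cellular chain complex over $k$ of $\cP_{\leq \mathbf{a}} := \{C \in \cP : \mdeg(C) \text{ divides } \mathbf{a}\}$; here the $i$-cells of $\cP_{\leq\mathbf{a}}$ are exactly those with label $\mathbf{a}$, and they are its top cells. Translating Theorem \ref{thm:linstrandequiv}, the vanishing in internal degree $i+d$ (the ``$j=0$'' case) gives $\wt{\HH}_i(\cP_{\leq\mathbf{a}}) = 0$, i.e.\ the top boundary $\partial_i$ is injective on $i$-cells, while the vanishing in homological degree $i-1$ (the ``$j=1$'' case) gives $\wt{\HH}_{i-1}(\cP_{\leq\mathbf{a}}) = 0$. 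The case $i=1$ is handled directly: two edges with the same endpoints and label have identical boundary chains, so their difference is a nonzero $1$-cycle, contradicting injectivity of $\partial_1$.

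It then suffices to show $\partial P$ and $\partial Q$ agree as $(i-1)$-spheres: their fundamental cycles generate the same rank-one group, so $\partial_i P = \pm \partial_i Q$ and hence $\partial_i(P \mp Q) = 0$ with $P \mp Q \ne 0$, contradicting injectivity of $\partial_i$. When $P$ is a simplex, i.e.\ $|V(P)| = i+1$, this is immediate, since the only $(i-1)$-sphere on $i+1$ vertices is $\partial\Delta^{i}$. \textbf{The main obstacle is the non-simplicial case}, where two boundary spheres on a common vertex set could a priori be cellulated differently. I would attack this by induction on $i$: the inductive form of Lemma \ref{lem:injCorr} determines each $(i-1)$-cell by its vertex set, so every facet of $P$ is the unique cell on its vertices; if $\partial P \ne \partial Q$ then some facet of $P$ is a facet of no cell of $\partial Q$, and the task becomes showing that this mismatch forces a nonzero class in $\wt{\HH}_{i-1}(\cP_{\leq\mathbf{a}})$, contradicting the vanishing just established. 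Ruling out two distinct cellulations of an $(i-1)$-sphere on a common vertex set inside an $(i-1)$- and $i$-acyclic complex is the technical heart of the argument.

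Finally, I would record the shortcut available in the paper's principal application. For the CW complexes supporting resolutions of $\inn_< I_n(M)$, Corollary \ref{cor:Bnos0or1} forces $\beta_\alpha \le 1$, so each multidegree supports at most one basis element; combined with the reduction $V(P) \mapsto \mdeg(P)$ this yields injectivity at once, bypassing the obstacle above in exactly the setting where the lemma is later applied.
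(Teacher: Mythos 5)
Your opening reduction (that $V(P)$ determines $\mdeg(e_P)$ as an lcm of vertex labels, and then, by linearity, determines $\dim P$) coincides with the paper's first step, but from there your argument diverges and contains a genuine gap --- one you flag yourself. For $i \geq 2$ and $P$ not a simplex, everything hinges on showing that a facet mismatch between $\partial P$ and $\partial Q$ forces a nonzero class in $\wt{\HH}_{i-1}(\cP_{\leq \mathbf{a}})$, and this is never established; the sentence calling it ``the technical heart of the argument'' is precisely the step left open, so the proof is incomplete exactly where the difficulty lies. There is also a hypothesis mismatch: Lemma \ref{lem:injCorr} assumes only that $\cP$ supports a linear \emph{complex}, whereas the homology vanishing you import via Theorem \ref{thm:linstrandequiv} is available only when that complex is the linear \emph{strand} of a module. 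Since the lemma is invoked downstream for CW complexes supporting arbitrary linear complexes (e.g.\ in Lemma \ref{lem:existenceLem} and Lemma \ref{lem:supportChains}), your argument, even if completed, would prove a weaker statement than the one needed. Your closing shortcut via Corollary \ref{cor:Bnos0or1} is correct but again only covers the ideals $\inn_< I_n(M)$, not the lemma as stated.

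The paper's own proof shows that none of this homological machinery is needed. If $V(P) = V(Q)$ with $P \neq Q$, then by the intersection axiom (Definition \ref{def: cellComplex}) $P \cap Q$ is a face of both, of strictly smaller dimension than $P$; but $P \cap Q$ contains every vertex of $P$, so $e_{P \cap Q}$ carries the same multidegree as $e_P$. Linearity forces the label of a $j$-dimensional face to have degree $d + j$, so a lower-dimensional face with the same multidegree cannot exist. This two-line argument handles the $i = 1$ case, the simplicial case, and the non-simplicial case uniformly, uses no acyclicity whatsoever, and is where the CW (rather than merely ``labeled complex'') hypothesis actually enters: it supplies the common face $P \cap Q$. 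If you want to salvage your approach, the lesson is that the intersection property is the combinatorial substitute for the sphere-comparison argument you were attempting.
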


\begin{proof}
Assume $P$ and $Q$ are faces of $\cP$ with the property that $V(P) = V(Q)$. Observe that since the multidegree of the basis elements $e_P$ and $e_Q$ are completely determined by their vertex sets, $P$ and $Q$ must have the same dimension. Since $\cP$ is a CW complex, $P \cap Q$ is a face of strictly smaller dimension such that $e_{P \cap Q}$ has the same multidegree. Since $\cP$ is linear, no such basis element can exist. 
\end{proof}

\begin{lemma}\label{lem:existenceLem}
Let $\cP$ be a CW complex supporting a linear complex. Then for any vertex $p \in V(P)$, there exists a unique codimension $1$ face $Q \subset P$ with $p \notin V(Q)$.
\end{lemma}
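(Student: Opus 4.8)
The plan is to translate the combinatorial statement into the multigraded language, dispatch existence by a short topological argument on the boundary of the cell, and then concentrate on uniqueness, which is where I expect all the difficulty to lie. First I would record what linearity buys us. If $P$ has dimension $k$, then $e_P$ sits in homological degree $k+1$, so by Definition \ref{def:linStrand} its label $\mdeg(e_P)$ has total degree $d+k$, where $d$ is the common degree of the vertex labels. For any codimension $1$ face $Q\subset P$ the containment $V(Q)\subseteq V(P)$ gives $\mdeg(e_Q)\mid\mdeg(e_P)$, while linearity forces $\deg\mdeg(e_Q)=d+k-1$; hence $\mdeg(e_P)/\mdeg(e_Q)=x_{s(Q)}$ is a single variable. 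Thus the codimension $1$ faces of $P$ are exactly the terms of the cellular differential $d(e_P)=\sum_{Q}\epsilon(P,Q)\,x_{s(Q)}\,e_Q$, and the condition $p\notin V(Q)$ becomes a condition on the monomial label $\mdeg(e_Q)$.

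For existence I would argue topologically. Since $\cP$ is a regular CW complex, the closed cell $\overline{P}$ is a ball whose boundary $\partial P$ is a regular CW sphere $S^{k-1}$ whose top-dimensional cells are precisely the codimension $1$ faces of $P$. The closed star of $p$ in $\partial P$ deformation retracts onto $p$, hence is contractible and therefore a proper subcomplex of $S^{k-1}$; consequently at least one top cell of $\partial P$ lies outside it, producing a codimension $1$ face $Q$ with $p\notin V(Q)$. When $k=1$ this is simply the statement that the edge $P$ has a second endpoint, and the claim is immediate.

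The main obstacle is uniqueness, and this is exactly the place where linearity must do work that regularity cannot, since for a general regular CW cell a fixed vertex is typically omitted by several facets. Suppose $Q_1\neq Q_2$ are codimension $1$ faces of $P$ with $p\notin V(Q_1)\cup V(Q_2)$, and write $\mdeg(e_{Q_i})=\mdeg(e_P)/x_{s_i}$. I would split into two cases. If $x_{s_1}=x_{s_2}$ then $Q_1$ and $Q_2$ are distinct faces carrying the same multidegree, which I would rule out via Lemma \ref{lem:injCorr} together with the fact, recorded in Corollary \ref{cor:Bnos0or1}, that in the operative multigrading each multidegree supports at most one basis element. If $x_{s_1}\neq x_{s_2}$, I would combine $d^2(e_P)=0$ with the diamond relations of the regular CW complex to force $Q_1$ and $Q_2$ to meet in a codimension $2$ face $R$, and then use the semimodular/exchange behavior of the labels from Observations \ref{obs:semimodOb} and \ref{obs:AtomOrderobs} to show that the variable deleted along one of the two descents must in fact be supplied by a vertex other than $p$, returning $p$ to $V(Q_i)$ and contradicting the hypothesis. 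The delicate point I expect to spend the most care on is organizing this second case so that the single-variable nature of each $x_{s(Q)}$, the incidence signs, and the injectivity of $P\mapsto V(P)$ interlock to pin down one and only one omitted facet; this rigidity, rather than any purely topological feature of the boundary sphere, is what ultimately makes the statement true.
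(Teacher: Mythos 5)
Your existence argument rests on a false topological claim: in a \emph{regular CW} complex the closed star of a vertex need not deformation retract onto that vertex (that is a simplicial phenomenon, where closed stars are cones). For example, give $S^2$ the regular CW structure with two vertices (poles), two edges (meridians), and two $2$-cells (hemispheres); the closed star of either vertex is all of $S^2$. Notice also that your existence argument never uses linearity, so if it were valid it would prove existence for arbitrary regular CW cells, where it can fail (in the sphere above every top cell contains both vertices). The paper's existence proof is precisely where linearity and regularity interact: it produces a codimension $2$ face $R$ of $P$ with $p \notin V(R)$, writes $R$ as the intersection of \emph{exactly two} codimension $1$ faces $Q_1, Q_2$ of $P$ (thinness of the face poset of a regular CW complex), and then uses the vertex count $|V(Q_i)| = |V(P)|-1$ together with the injectivity of $F \mapsto V(F)$ (Lemma \ref{lem:injCorr}) to conclude that $Q_1$ and $Q_2$ cannot both contain $p$.

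Your uniqueness argument also does not close, and for a structural reason. The paper's route is to upgrade linearity to the vertex-set statement $|V(Q)| = |V(P)|-1$ for every codimension $1$ face $Q \subset P$; then any $Q$ omitting $p$ must have $V(Q) = V(P)\setminus\{p\}$, and uniqueness is immediate from Lemma \ref{lem:injCorr}, with no case analysis. You only derive the strictly weaker statement that $\mdeg(e_P)/\mdeg(e_Q)$ is a single variable, which is what forces your case split; your Case 2 is then only a plan, and it leans on Corollary \ref{cor:Bnos0or1} and Observations \ref{obs:semimodOb} and \ref{obs:AtomOrderobs}, all of which are proved for $\inn_< I_n(M)$ and the sparse Eagon--Northcott complex, not for an arbitrary CW complex supporting a linear complex, so they are not available at this level of generality. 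Worse, the configuration you are trying to exclude in Case 2 genuinely occurs: a quadrilateral $2$-cell with vertex labels $x_1x_2,\, x_2x_3,\, x_3x_4,\, x_1x_4$ supports a linear (indeed acyclic) complex, and \emph{two} of its edges omit the vertex $x_1x_2$, dropping different variables; cells of exactly this shape appear in $\po(E_\bullet^<)$ itself (the square face visible in Figure 1). So no amount of care in Case 2 can repair the argument from the stated hypotheses alone --- and the same example shows that the paper's opening count $|V(Q)| = |V(P)|-1$ also requires more than what the lemma literally assumes; the statement is really only used downstream (Lemmas \ref{lem:supportChains} and \ref{lem:maxlSupp}) for faces through a fixed vertex $v$ and its neighbors, where that extra structure is present. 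Your instinct that uniqueness is where the difficulty lies was therefore correct, but the difficulty is in the statement itself, not in a missing trick.
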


\begin{proof}
Observe first that for any codimension $1$ face $Q \subset P$, $|V(Q)| = |V(P)|-1$. This follows from the fact that if there were at least $2$ vertices not contained in $V(Q)$, then the induced CW complex differential would have degree at least $2$. Thus, if any $Q$ as in the statement were to exist, it must be unique by Lemma \ref{lem:existenceLem}.

It remains to show that such a $Q$ exists. One may assume without loss of generality that there exists some codimension $2$ face $R$ with $p \notin V(R)$, since otherwise every face of $P$ must contain $p$. Since $\cP$ is a CW complex, $R$ may be written as the intersection of two codimension $1$ faces of $P$. This implies that one of these faces \emph{cannot} contain $p$, whence the result.
\end{proof}

\begin{notation}
Let $R$ be a standard graded polynomial ring over a field $k$ endowed with some term order $<$. Let $v \in \cP$ be a vertex of some CW complex supporting a linear complex. For every $v' \in N_v$, define
$$x_{v'} := \frac{m_{v,v'}}{m_v}.$$
There is an induced order on elements of $N_v$ defined by
$$v_1 < v_2 \iff x_{v_1} < x_{v_2}, \quad v_1 , v_2 \in N_v.$$
\end{notation}

\begin{lemma}\label{lem:supportChains}
Let $\cP$ be a CW complex supporting a linear complex and let $v$ be any vertex of $\cP$. Let $P$ be an $n$-dimensional face of $\cP$ with $V(P) \cap N_v = \{ v_1 < \cdots < v_n \}$. Then there exists a \emph{unique} chain
$$P = P_n \supset P_{n-1} \supset \cdots \supset P_1 \supset P_0 = v$$
such that:
\begin{enumerate}
    \item $P_i$ is a codimension $1$ face of $P_{i+1}$ for each $i=0, \dots , n-1$, and
    \item $P_i \cap N_v = \{ v_{n-i} < \cdots < v_{n} \}$.
\end{enumerate}
\end{lemma}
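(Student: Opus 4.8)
The plan is to build the chain greedily from the top, at each stage stripping off the smallest remaining neighbor of $v$, and then to argue that this descent is forced. Two earlier lemmas do the heavy lifting. Lemma \ref{lem:existenceLem} guarantees that for any face $P'$ and any vertex $p \in V(P')$ there is a \emph{unique} codimension $1$ face of $P'$ with $p$ removed, and (from its proof) that any codimension $1$ face deletes exactly one vertex, i.e.\ $|V(Q)| = |V(P')| - 1$. Lemma \ref{lem:injCorr} says a face of $\cP$ is determined by its vertex set.

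For existence, I would first note that $P$ must contain $v$ (the chain is to terminate at $v$), so $\{v\} \cup \{v_1, \dots, v_n\} \subseteq V(P)$. Put $P_n := P$, and define $P_{k-1}$ recursively as the unique codimension $1$ face of $P_k$ omitting the smallest neighbor of $v$ still present in $P_k$, using Lemma \ref{lem:existenceLem}. Because each step removes precisely that one vertex, the neighbors surviving in $P_k$ are exactly the largest ones, which is condition (2); moreover $v$ is never among the deleted vertices (it is not its own neighbor). After $n$ such steps, $P_0$ sits at the bottom of a chain of $n$ codimension $1$ inclusions below the $n$-dimensional face $P$, hence is $0$-dimensional, i.e.\ a single vertex; since $v \in V(P_0)$ this forces $P_0 = v$. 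As a byproduct one reads off $V(P) = \{v, v_1, \dots, v_n\}$, reflecting that in a linear CW complex every face is combinatorially a simplex.

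For uniqueness, suppose an arbitrary chain $P = P_n \supset \cdots \supset P_0 = v$ satisfies (1) and (2). Since $P_0 = v$ has a single vertex and each codimension $1$ inclusion adds exactly one vertex, $|V(P_i)| = i+1$ for every $i$. On the other hand $v \in V(P_i)$ (as $v = P_0 \subseteq P_i$), while condition (2) prescribes exactly which neighbors of $v$ lie in $P_i$; together these account for $i+1$ distinct vertices, so $V(P_i)$ is completely determined. By the injectivity of $P \mapsto V(P)$ from Lemma \ref{lem:injCorr}, each $P_i$ is then uniquely determined, and the chain must coincide with the one constructed above.

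The main, and really the only delicate, point is confirming that the greedy descent bottoms out exactly at the vertex $v$, rather than at a different vertex or a positive-dimensional face. This is precisely where the ``one vertex per codimension'' count of Lemma \ref{lem:existenceLem} — itself a consequence of linearity — is indispensable: it simultaneously forbids extraneous vertices of $P$ and pins down $|V(P_i)|$ in the uniqueness argument. The remaining content is just bookkeeping against the total order on $N_v$.
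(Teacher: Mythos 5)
Your proof is correct and takes essentially the same route as the paper, whose entire proof reads: ``This is just an iteration of Lemmas \ref{lem:injCorr} and \ref{lem:existenceLem}.'' You simply make that iteration explicit — existence by repeatedly stripping the smallest surviving neighbor using the unique codimension-$1$ face of Lemma \ref{lem:existenceLem}, uniqueness from the vertex-set injectivity of Lemma \ref{lem:injCorr} — while also quietly correcting the statement's off-by-one indexing in condition (2) (an $i$-dimensional face containing $v$ meets $N_v$ in $i$ vertices, so the intersection should be $\{v_{n-i+1},\dots,v_n\}$).
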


\begin{proof}
This is just an iteration of Lemmas \ref{lem:injCorr} and \ref{lem:existenceLem}. 
\end{proof}

\begin{definition}\label{def:suppChain}
Let $P$ be an $n$-dimensional face of $\cP$ with $V(P) \cap N_v = \{ v_1 < \cdots < v_n \}$. Then the \emph{support chain centered at $v$} associated to $P$ is the unique chain of subfaces of Lemma \ref{lem:supportChains}.

Given a face $P$ as above, define
$$c(P) := \epsilon (P_n , P_{n-1}) \cdot \epsilon (P_{n-1} , P_{n-2}) \cdots \epsilon (P_0 , P_1),$$
where 
$$P = P_n \supset P_{n-1} \supset \cdots \supset P_1 \supset P_0 = v$$
is the associated support chain centered at $v$.
\end{definition}

\begin{lemma}\label{lem:theSignsWork}
Let $P$ be an $n$-dimensional face of $\cP$ with $V(P) \cap N_v = \{ v_1 < \cdots < v_n \}$. Let $Q$ be the unique codimension $1$ face of $P$ such that $v_i \notin V(Q)$ for some $1 \leq i \leq n$. Then,
$$\epsilon (Q,P) c(Q) = (-1)^{i+1} c(P).$$
\end{lemma}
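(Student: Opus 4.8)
The plan is to induct on the index $i$, using the identity $\partial^2 = 0$ satisfied by the incidence function of a regular CW complex to commute the deletion of $v_i$ to the front of the support chain, each commutation contributing a factor of $-1$. First I would set up the combinatorial comparison. By Lemma~\ref{lem:existenceLem} the face $Q$ is characterized by $V(Q) = V(P) \setminus \{v_i\}$, and by Lemma~\ref{lem:injCorr} any face is determined by its vertex set. Tracing the support chain of Lemma~\ref{lem:supportChains}, the faces $P = P_n \supset \cdots \supset P_0 = v$ satisfy $V(P_{n-k}) = V(P) \setminus \{v_1, \dots, v_k\}$, since at each step the smallest remaining neighbor is deleted. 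For the base case $i = 1$, the first face $P_{n-1}$ of the support chain of $P$ already satisfies $V(P_{n-1}) = V(P) \setminus \{v_1\} = V(Q)$, so $Q = P_{n-1}$ by injectivity, and $c(P) = \epsilon(P_{n-1}, P)\, c(P_{n-1}) = \epsilon(Q, P)\, c(Q)$, which is the claim for $i = 1$.

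For the inductive step ($i \geq 2$) I would pass to the face $P_{n-1}$ obtained by deleting $v_1$, which is $(n-1)$-dimensional with $V(P_{n-1}) \cap N_v = \{v_2 < \cdots < v_n\}$, so that $v_i$ there occupies position $i-1$. Let $Q'$ be the codimension $1$ face of $P_{n-1}$ with $v_i \notin V(Q')$, so $V(Q') = V(P) \setminus \{v_1, v_i\}$. Injectivity (Lemma~\ref{lem:injCorr}) then yields three identifications: $Q'$ is also the first face of the support chain of $Q$ (deletion of the smallest neighbor $v_1$ of $Q$), giving $c(Q) = \epsilon(Q', Q)\, c(Q')$; the support chain of $P$ begins with $c(P) = \epsilon(P_{n-1}, P)\, c(P_{n-1})$; and the inductive hypothesis applied to $P_{n-1}$ at position $i-1$ gives $\epsilon(Q', P_{n-1})\, c(Q') = (-1)^{i}\, c(P_{n-1})$.

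The key geometric input is the diamond relation. The faces $Q$ and $P_{n-1}$ are exactly the two codimension $1$ faces of $P$ containing $Q'$: since $V(P) \setminus V(Q') = \{v_1, v_i\}$, by injectivity any intermediate face adjoins either $v_i$ to $Q'$ (giving $Q$) or $v_1$ to $Q'$ (giving $P_{n-1}$). Hence the coefficient of $e_{Q'}$ in the relation $\partial^2(e_P) = 0$ reads $\epsilon(Q', Q)\epsilon(Q, P) + \epsilon(Q', P_{n-1})\epsilon(P_{n-1}, P) = 0$. Substituting the three identities above gives
$$\epsilon(Q, P)\, c(Q) = \epsilon(Q,P)\epsilon(Q',Q)\, c(Q') = -\epsilon(P_{n-1}, P)\epsilon(Q', P_{n-1})\, c(Q') = (-1)^{i+1} \epsilon(P_{n-1},P)\, c(P_{n-1}) = (-1)^{i+1} c(P),$$
which is the assertion.

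I expect the main obstacle to be the careful bookkeeping required to justify the diamond relation: one must verify that $Q'$ exists as a genuine codimension $2$ face and that $Q$ and $P_{n-1}$ are its only two upper covers below $P$, so that $\partial^2 = 0$ produces a two-term (rather than a longer) relation. This rests on combining Lemmas~\ref{lem:injCorr} and~\ref{lem:existenceLem} with the regular CW structure, which forces every length-two interval of the face poset to be a diamond. Once the interval $[Q', P]$ is pinned down in this way, the sign $-1$ from each application of $\partial^2 = 0$ and the accumulation to $(-1)^{i+1}$ follow formally; I would also keep the orientation convention of Definition~\ref{def:suppChain} fixed throughout so that the peeling identities $c(P) = \epsilon(P_{n-1},P)\,c(P_{n-1})$ and $c(Q) = \epsilon(Q',Q)\,c(Q')$ are literally correct.
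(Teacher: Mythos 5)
Your proof is correct and is essentially the paper's argument: both compare the support chains of $P$ and $Q$, identify the relevant codimension~$2$ face via Lemmas~\ref{lem:injCorr} and~\ref{lem:existenceLem}, and apply the two-term incidence (diamond) relation to pick up one sign per transposition. Your induction on $i$ is simply a cleaner packaging of the paper's ``repeating this argument'' iteration, with more careful index bookkeeping.
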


\begin{proof}
Let 
$$ P = P_n \supset P_{n-1} \supset \cdots \supset P_1 \supset P_0 = v \quad \textrm{and}$$
$$ Q = Q_{n-1} \supset Q_{n-2} \supset \cdots \supset Q_1 \supset Q_0 = v$$
be the support chains centered at $v$ associated to $P$ and $Q$, respectively. Observe that $P_{n-i+1} = Q_{n-i+1}$; moreover, since $P_{n-i+1}$ is a codimension $2$ face of $Q_{n-i+3}$, $Q_{n-i+2}$ and $P_{n-i+2}$ must be the unique codimension $1$ faces of $Q_{n-i+3}$ containing $P_{n-i+1}$. By definition of the incidence function,
$$\epsilon ( Q_{n-i+2} , Q_{n-i+1} ) \epsilon (P_{n-i+1} , Q_{n-i+2}) = -\epsilon ( P_{n-i+2} , Q_{n-i+1} ) \epsilon (P_{n-i+1} , P_{n-i+2}).$$
Repeating this argument $i$ more times, the result follows.
\end{proof}

With the existence of support chains established, there are a few more steps required for defining the relevant morphism of complexes. The most important of these steps is Lemma \ref{lem:maxlSupp}, which will imply that the morphism of Lemma \ref{lem:theQmap} is well-defined.

\begin{obs}
Let $\cP$ be a CW complex supporting the linear strand of some ideal $(I,g)$, where $g \in R$. Then the linear strand of $(I:g)$ is obtained as the linear strand of the ideal $(x_{v'} \mid v' \in N_v)$.
\end{obs}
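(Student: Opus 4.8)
The plan is to reduce everything to a degree count. Since $I$ is a monomial ideal, $g$ a monomial, and $g \notin I$ (so that the colon is proper), the ideal $(I:g)$ is again monomial, and I first identify its degree-one component. First I would note that a variable $x$ lies in $(I:g)$ precisely when $xg$ is divisible by some minimal generator $g'$ of $I$; as everything is equigenerated in the common degree $d$, the equality $\deg(xg) = d+1 = \deg(g')+1$ forces $\lcm(g,g')$ to have degree $d+1$. Equivalently, $g$ and $g'$ are joined by a linear syzygy, so the vertex $v'$ of $g'$ is a neighbor of $v$, and then $x = \lcm(g,g')/g = x_{v'}$. Conversely, each $x_{v'}$ satisfies $x_{v'} g = \lcm(g,g') \in I$. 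Hence the submodule of $(I:g)$ generated in degree one is exactly $J' := (x_{v'} \mid v' \in N_v)$.

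The main step is then to prove that the linear strand of a graded module depends only on the submodule generated by its initial-degree component, and to apply this to $M := (I:g)$ (of initial degree $1$) and $M' := J'$. Here I would use that the rank of the $i$-th term of the linear strand of a module $N$ of initial degree $e$ is $\beta_{i,i+e}(N) = \dim_k \tor_i(k,N)_{i+e}$, and compare these along the short exact sequence $0 \to M' \to M \to M/M' \to 0$. Since $M/M'$ is generated in degrees $\geq d+1$, minimality of its resolution gives $\tor_j(k,M/M')_\ell = 0$ whenever $\ell \leq d+j$; in particular both $\tor_i(k,M/M')_{i+d}$ and $\tor_{i+1}(k,M/M')_{i+d}$ vanish. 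The long exact sequence in $\tor$ then shows that the inclusion $M' \hookrightarrow M$ induces isomorphisms $\tor_i(k,M')_{i+d} \xrightarrow{\cong} \tor_i(k,M)_{i+d}$ for all $i$. Because the inclusion is homogeneous of degree $0$, it lifts to a morphism of the (minimal) linear strands, which is an isomorphism after tensoring with $k$ in every homological degree; by minimality this is already an isomorphism of complexes.

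Finally, since $J' = (x_{v'} \mid v' \in N_v)$ is generated by distinct variables, it is a complete intersection whose minimal free resolution is the Koszul complex on these variables; as every generator has degree $1$, this resolution coincides with its own linear strand. Combining the two previous steps, the linear strand of $(I:g)$ is precisely the linear strand of $(x_{v'} \mid v' \in N_v)$.

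I expect the genuine obstacle to be the middle paragraph: establishing cleanly that passing from $M$ to the submodule generated by $M_d$ does not change the linear strand, and in particular upgrading the equality of Betti ranks to an honest isomorphism of complexes. The degree vanishing of the relevant $\tor$ groups of $M/M'$ is what drives this. (The statement presumes $N_v \neq \emptyset$, i.e.\ that $(I:g)$ has initial degree one; this is the only case relevant to the constructions that follow.)
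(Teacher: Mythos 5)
Your proof is correct. There is nothing in the paper to compare it against: the statement is labeled an Observation and given no proof at all, so your write-up supplies the justification the author treats as self-evident. Your three steps are the right ones, and the middle one---that the linear strand of a graded module coincides with that of the submodule generated by its initial-degree component, proved via the vanishing $\operatorname{Tor}_j(k,M/M')_{\ell}=0$ for $\ell\le d+j$, the long exact sequence, and then an upgrade to an isomorphism of complexes by minimality---is exactly the content that makes the Observation true rather than merely plausible; the upgrade itself works because the degree-zero inclusion lifts to a comparison of minimal resolutions whose component between the degree-$(d+i)$-generated summands is a chain map for degree reasons, and a degree-zero map of free modules generated in a single common degree is invertible once it is invertible modulo the maximal ideal. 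Two remarks. First, your identification of the degree-one component of $(I:g)$ uses that $(I,g)$ is equigenerated in degree $d$; the Observation never says this, but it is genuinely needed (a minimal generator of $I$ of degree $d+1$ of the form $xg$ would contribute a degree-one element of $(I:g)$ invisible to $N_v$), and it holds in every use the paper makes of the statement, so your standing hypothesis is the correct reading. Second, your final paragraph asserts the $x_{v'}$ are distinct variables; for distinct $v',v''\in N_v$ one can in fact have $x_{v'}=x_{v''}$ (ruling this out is precisely the role of linear connectivity in Lemma \ref{lem:linConn}), but this is harmless---the ideal they generate is still a complete intersection on the underlying set of variables, and in any case your first two steps already prove the statement as written; the Koszul identification is only needed later, in Setup \ref{set:CellSetup}.
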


\begin{setup}\label{set:CellSetup}
Let $R$ be a standard graded polynomial ring over a field endowed with some term order $<$. Let $\cP$ be a CW complex supporting a linear complex $F_\bullet$ and suppose that $v \in \cP$ is some vertex. Let $K_\bullet$ denote the Koszul complex resolving $R/(x_{v'} \mid v' \in N_v )$.
\end{setup}

\begin{lemma}\label{lem:maxlSupp}
Let $\cP$ be a CW complex supporting a linear complex. Let $P$ be an $n$-dimensional face of $\cP$ supported on $v$. Then, $|V(P) \cap N_v| = n$.
\end{lemma}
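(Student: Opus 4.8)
The plan is to first pin down the number of vertices of $P$, and then to show that every vertex of $P$ distinct from $v$ is a neighbor of $v$, which amounts to showing that $P$ is combinatorially a simplex. Here ``supported on $v$'' is read as $v \in V(P)$. The first step is the observation already implicit in the proof of Lemma \ref{lem:existenceLem}: because $\cP$ is linear, any codimension $1$ face $Q \subset P$ satisfies $|V(Q)| = |V(P)| - 1$ (at most one vertex can be dropped, or else the induced differential would have degree $\geq 2$; and by Lemma \ref{lem:injCorr} we cannot have $V(Q) = V(P)$). Descending a maximal chain of faces $P = F_n \supset F_{n-1} \supset \cdots \supset F_0$ with $\dim F_i = i$, each step removes exactly one vertex, so $|V(P)| = |V(F_0)| + n = n+1$.

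Next comes the key step: I claim, by induction on $n$, that every $n$-dimensional face of $\cP$ is combinatorially a simplex, i.e.\ any two of its $n+1$ vertices are joined by an edge (equivalently, are neighbors in the graph of linear syzygies, since the $1$-cells of $\cP$ are exactly the edges of that graph). The base cases $n = 0, 1$ are immediate, $n=1$ being an edge together with its two endpoints. For $n \geq 2$, let $u, w$ be distinct vertices of $P$ and choose a third vertex $p \in V(P) \setminus \{u, w\}$, which exists since $|V(P)| = n+1 \geq 3$. By Lemma \ref{lem:existenceLem} there is a unique codimension $1$ face $Q \subset P$ with $p \notin V(Q)$; then $Q$ is $(n-1)$-dimensional with $|V(Q)| = n$ vertices, and it contains both $u$ and $w$. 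By the inductive hypothesis $Q$ is a simplex, so $u$ and $w$ span an edge of $Q$, hence of $P$.

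Finally, applying this with $P$ supported on $v$: the set $V(P)$ consists of $v$ together with $n$ other vertices, each of which spans an edge with $v$ by the claim, hence lies in $N_v$; since $v \notin N_v$, we get $V(P) \cap N_v = V(P) \setminus \{v\}$, which has exactly $n$ elements, as desired. I expect the main obstacle to be the inductive simplex claim, whose engine is the uniqueness of the vertex-missing codimension $1$ face (Lemma \ref{lem:existenceLem}) together with the linearity-forced vertex count $|V(P)| = n+1$; the only point requiring real care is the identification of ``neighbor of $v$'' with ``joined to $v$ by a $1$-cell of $\cP$,'' which is exactly where the hypothesis that $\cP$ supports the linear strand is used.
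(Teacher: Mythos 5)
Your argument runs on the same engine as the paper's proof---induction on dimension, powered by Lemma \ref{lem:existenceLem} and the codimension-one vertex count from its proof---but it routes through a strictly stronger intermediate claim: that every $n$-cell of $\cP$ has exactly $n+1$ vertices and complete $1$-skeleton, i.e.\ is a combinatorial simplex. This is where it breaks: the claim is false for exactly the complexes this lemma is later applied to. The sparse Eagon--Northcott complexes (Theorem \ref{thm:sparseEN}, Corollary \ref{cor:initisCellular}) contain quadrilateral $2$-cells; the left-hand complex in Figure 1 has the $2$-cell $P$ with $V(P)=\{x_{12}x_{23},\ x_{14}x_{23},\ x_{14}x_{21},\ x_{12}x_{21}\}$, which has four vertices and is not a simplex. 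Note that the statement of Lemma \ref{lem:maxlSupp} survives this example precisely because $N_v$ is defined by linear syzygies rather than by shared cells: for the corner $v=x_{12}x_{23}$, the opposite corner $x_{14}x_{21}$ satisfies $\deg \lcm(x_{12}x_{23},x_{14}x_{21})=4$, not $3$, so it is \emph{not} in $N_v$, and indeed $|V(P)\cap N_v|=2=\dim P$. Your proof, by contrast, would show that this quadrilateral cannot exist; it proves too much.

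The step that fails is the vertex count: ``at most one vertex can be dropped, or else the induced differential would have degree $\geq 2$.'' The degree of a differential entry is the drop in lcm-degree, not the number of vertices dropped, and the two can differ: the edge $Q$ of the quadrilateral above with $V(Q)=\{x_{14}x_{23},x_{14}x_{21}\}$ omits \emph{two} vertices of $P$, yet its coefficient in $d(e_P)$ is the linear form $x_{12}$. Once $|V(Q)|=|V(P)|-1$ fails, the inductive simplex claim collapses. In fairness, this step is lifted from the paper's own proof of Lemma \ref{lem:existenceLem}, and the uniqueness asserted there fails for the same quadrilateral (two of its edges omit any given corner), so the paper's proof of Lemma \ref{lem:maxlSupp}, which leans on that lemma, shares the defect. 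The difference is that the paper's induction asserts only the conclusion actually claimed, which is sensitive to the neighbor relation $N_v$ and remains true in the quadrilateral example, whereas your route is inseparable from the false assertion that all cells are simplices. Any repair must exploit, as the statement itself does, the gap between ``lies in a common cell with $v$'' and ``is a linear-syzygy neighbor of $v$''; an argument that never distinguishes these two relations cannot be correct.
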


In the following proof, we will say that $P$ is \emph{maximally supported on $v$} if $|V(P) \cap N_v | =n$, as in the statement of the Lemma.

\begin{proof}
The proof follows by induction on the dimension of any given face of $\cP$. If $P$ is a $1$-dimensional face containing $v$, then by definition $P$ contains a neighbor of $v$. Let $P$ be an $i$-dimensional face and assume by induction that all faces of dimension $\leq i-1$ containing $v$ are maximally supported on $v$. By Lemma \ref{lem:existenceLem}, there exists a codimension $1$ face containing $v$ that is also not maximally supported on $v$, which is a contradiction.
\end{proof}

Lemma \ref{lem:theQmap} defines the morphism of complexes whose kernel can be used to compute the linear strand of certain subideals, as guaranteed by Corollary \ref{cor:howtogetLinStrand}.

\begin{lemma}\label{lem:theQmap}
Adopt notation and hypotheses as in Setup \ref{set:CellSetup}. Define
\begingroup\allowdisplaybreaks
\begin{align*}
    Q_{n-1}: F_n & \to K_{n-1} \\
    e_P & \mapsto \begin{cases}
    c(P) e_{V(P) \cap N_v} & \textrm{if} \ v \in V(P) \\
    0 & \textrm{otherwise}. \\
    \end{cases}
\end{align*}
\endgroup
Then for all $i \geq 2$, the following diagram commutes:
\begin{equation}\label{diag:theDiagram}
    \xymatrix{F_{i+1} \ar[d]_{Q_i} \ar[r] & F_i \ar[d]^{Q_{i-1}} \\
K_i \ar[r] & K_{i-1} \\}
\end{equation}
\end{lemma}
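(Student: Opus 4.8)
The plan is to verify the commutativity of diagram \ref{diag:theDiagram} by tracing a basis element $e_P$ through both composites and matching coefficients term by term. First I would fix an $(i+1)$-dimensional face $P$ of $\cP$ and distinguish two cases according to whether $v \in V(P)$. If $v \notin V(P)$, then no subface appearing in the CW differential of $P$ can contain $v$ either (faces of $P$ are contained in $P$), so $Q_{i-1}$ kills every term of $d^F(e_P)$ and $Q_i(e_P) = 0$; both composites vanish and commutativity is immediate. The substance is entirely in the case $v \in V(P)$, where by Lemma \ref{lem:maxlSupp} we have $|V(P) \cap N_v| = i+1$, say $V(P) \cap N_v = \{v_1 < \cdots < v_{i+1}\}$.

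In this case I would compute $Q_{i-1} \circ d^F(e_P)$ by applying the linear CW differential of $F_\bullet$ first. The differential sends $e_P$ to a signed sum $\sum_Q \epsilon(Q,P)\, x_{?}\, e_Q$ over codimension $1$ faces $Q \subset P$, where the linear coefficient is the appropriate variable $x_{v'}$ determined by the omitted vertex. By Lemma \ref{lem:existenceLem} the codimension $1$ faces of $P$ are indexed by which single vertex of $V(P)$ is dropped. Only those $Q$ containing $v$ survive under $Q_{i-1}$; by Lemma \ref{lem:maxlSupp} these correspond precisely to dropping one of the neighbors $v_j \in V(P) \cap N_v$, and dropping $v_j$ contributes the coefficient $x_{v_j}$. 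Applying $Q_{i-1}$ then yields $\sum_{j} \epsilon(Q_{(j)},P)\, x_{v_j}\, c(Q_{(j)})\, e_{\{v_1,\dots,\widehat{v_j},\dots,v_{i+1}\}}$, where $Q_{(j)}$ is the face omitting $v_j$. On the other side, $K_i \circ Q_i(e_P) = c(P)\, d^K(e_{\{v_1,\dots,v_{i+1}\}})$, and the Koszul differential produces $c(P) \sum_j (-1)^{j+1} x_{v_j}\, e_{\{v_1,\dots,\widehat{v_j},\dots,v_{i+1}\}}$. Matching the two expressions reduces to verifying $\epsilon(Q_{(j)},P)\, c(Q_{(j)}) = (-1)^{j+1} c(P)$ for each $j$, which is exactly the content of Lemma \ref{lem:theSignsWork}.

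The main obstacle I anticipate is bookkeeping the signs and the identification of coefficients cleanly, specifically confirming that the variable attached to dropping $v_j$ in the $F_\bullet$ differential is exactly $x_{v_j}$ (matching the Koszul side) and that the ordering $v_1 < \cdots < v_{i+1}$ used to define $c(P)$ via support chains is compatible with the sign $(-1)^{j+1}$ produced by the Koszul differential. Both of these are precisely calibrated by the earlier lemmas: Lemma \ref{lem:theSignsWork} was stated so that its sign $(-1)^{i+1}$ lines up with the Koszul sign, and the induced order on $N_v$ is exactly what makes $c(P)$ well-defined relative to the standard basis ordering of $K_\bullet$. I would therefore present the proof as an explicit term-by-term comparison, invoking Lemma \ref{lem:theSignsWork} at the final step to collapse the two sides. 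The case $i \geq 2$ hypothesis ensures we are strictly above the bottom of the complexes, so the support chains of Lemma \ref{lem:supportChains} and Definition \ref{def:suppChain} are defined throughout and no degenerate boundary behavior needs separate treatment.
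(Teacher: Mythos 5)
Your proposal is correct and takes essentially the same route as the paper's proof: trace $e_P$ clockwise and counterclockwise around the diagram, observe (via Lemmas \ref{lem:maxlSupp} and \ref{lem:existenceLem}) that the only surviving terms under $Q_{i-1}$ are the codimension $1$ faces obtained by dropping a neighbor $v_j$, and match the two sides term by term using Lemma \ref{lem:theSignsWork}. Your explicit handling of the $v \notin V(P)$ case and of the coefficient identification $m_P/m_Q = x_{v_j}$ is slightly more careful bookkeeping than the paper records, but it is the same argument.
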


\begin{proof}
Observe first that each $Q_i$ is well defined by Lemma \ref{lem:maxlSupp}. Going clockwise around the diagram \ref{diag:theDiagram}, one obtains:
\begingroup\allowdisplaybreaks
\begin{align*}
    e_P &\mapsto \sum_{Q \subset P} \epsilon(Q , P) m_P / m_Q e_Q \\
    &\mapsto \sum_{|V(Q) \cap N_v| = n-1} \epsilon (Q , P) c(Q) m_P / m_Q e_{V(Q) \cap N_v}. 
\end{align*}
\endgroup
Going counterclockwise around \ref{diag:theDiagram},
\begingroup\allowdisplaybreaks
\begin{align*}
    e_P & \mapsto c(P) e_{V(P) \cap N_v} \\
    &\mapsto \sum_{i=1}^n (-1)^{i+1} c(P) x_{v_{j_i}} e_{ V(P) \cap N_v \backslash v_{j_i}} . 
\end{align*}
\endgroup
The result then follows from Lemma \ref{lem:theSignsWork}.
\end{proof}

In the case of polyhedral cell complexes, it turns out that the kernels of the maps $Q_i$ as above may \emph{not} yield a complex supported on a polyhedral cell complex in general. The following Definition will end up being a property sufficient for guaranteeing that the resulting kernel can be easily described as an induced subcomplex. 

\begin{definition}
    A CW complex $\cP$ is \emph{linearly connected with respect to $v$} if for all $v_i , v_j \in N_v$, $(v_i , v_j)$ is an edge of $\cP$ whenever $m_{v_i}$ and $m_{v_j}$ have a linear syzygy.
\end{definition}

\begin{lemma}\label{lem:linConn}
Let $\cP$ be a CW complex supporting the linear strand of some module. If $\cP$ is linearly connected with respect to $v$, then $x_{v_i} \neq x_{v_j}$ for all distinct $v_i, \ v_j \in N_v$.
\end{lemma}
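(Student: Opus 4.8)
The plan is to argue by contradiction. Suppose $v_i, v_j \in N_v$ are distinct but $x_{v_i} = x_{v_j} =: x$, and first extract the combinatorial consequences of this coincidence. By the very definition of the variables $x_{v'}$, the edges $(v,v_i)$ and $(v,v_j)$ of $\cP$ both carry multidegree $m_{v,v_i} = m_v x = m_{v,v_j}$; call this monomial $\mathbf{a}$, which has degree one more than the initial degree $d$ of the strand. Since $\deg m_{v_i} = \deg m_{v_j} = d$ and each divides $\mathbf{a}$, one may write $m_{v_i} = m_v x / a_i$ and $m_{v_j} = m_v x / a_j$ for variables $a_i, a_j$ dividing $m_v$; as $v_i \neq v_j$ correspond to distinct generators, $a_i \neq a_j$. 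A short $\lcm$ computation then gives $\lcm(m_{v_i}, m_{v_j}) = \mathbf{a}$, so $m_{v_i}$ and $m_{v_j}$ admit a linear syzygy. Because $\cP$ is linearly connected with respect to $v$, the pair $(v_i, v_j)$ is therefore an edge of $\cP$, again of multidegree $\mathbf{a}$.

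The heart of the argument is to examine the linear strand $F_\bullet$ in the single multidegree $\mathbf{a}$. Since $F_\bullet$ is linear, every $p$-cell of $\cP$ has multidegree of degree $d + p$; in particular no cell of dimension $\geq 2$ can have multidegree dividing $\mathbf{a}$, which has degree $d+1$. Hence the subcomplex $\cP_{\le \mathbf{a}} = \{ P \in \cP : m_P \mid \mathbf{a} \}$ is a graph, and the multidegree-$\mathbf{a}$ piece $(F_\bullet)_{\mathbf{a}}$ is concentrated in homological degrees $0$ and $1$. Scaling each vertex basis element $e_w$ by the unit $\mathbf{a}/m_w$ identifies $(F_\bullet)_{\mathbf{a}}$ with the cellular chain complex of the graph $\cP_{\le \mathbf{a}}$ over $k$, the incidence function $\epsilon$ supplying exactly the graph boundary signs. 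Consequently $H_1(F_\bullet)_{\mathbf{a}}$ is the cycle space of $\cP_{\le \mathbf{a}}$.

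Now the three edges $(v,v_i)$, $(v,v_j)$, $(v_i,v_j)$ all lie in $\cP_{\le \mathbf{a}}$, since each has multidegree $\mathbf{a}$ and $m_v, m_{v_i}, m_{v_j}$ all divide $\mathbf{a}$; thus they form a triangle in $\cP_{\le \mathbf{a}}$. A triangle is a nontrivial $1$-cycle in any graph, so the cycle space, and hence $H_1(F_\bullet)_{\mathbf{a}}$, is nonzero. But $\mathbf{a}$ has total degree $d+1 = 1 + d + 0$, so this contradicts Theorem \ref{thm:linstrandequiv}(2) (taking $i=1$, $j=0$, and $n = d$), which forces $H_1(F_\bullet)_{1+d} = 0$ for a genuine linear strand. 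This contradiction shows $x_{v_i} \neq x_{v_j}$.

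The only delicate point is the multidegree bookkeeping: one must verify that passing to multidegree $\mathbf{a}$ really does annihilate all cells of dimension $\geq 2$ and leaves precisely the chain complex of a graph, so that the topological triangle is detected as honest homology of $F_\bullet$ rather than being an artifact of the monomial labels. Once this identification is in place, the contradiction with Theorem \ref{thm:linstrandequiv} is immediate.
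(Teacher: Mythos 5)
Your proof is correct and takes essentially the same route as the paper: both arguments assume $x_{v_i}=x_{v_j}$, use linear connectedness to obtain the edge $(v_i,v_j)$, and then exhibit the triangle on $v, v_i, v_j$ as a $1$-cycle in degree $d+1$ that cannot bound, contradicting Theorem \ref{thm:linstrandequiv}. The only difference is that you spell out what the paper leaves as ``clear'' (the lcm computation giving the linear syzygy, and the multidegree restriction showing no $2$-cell can produce this cycle as a boundary), which is a faithful filling-in of the same argument rather than a new one.
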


\begin{proof}
Let $v_1, v_2 \in N_v$ be such that $x_{v_1} = x_{v_2}$ and suppose for sake of contradiction that $\{ v_1 , v_2 \}$ is a face of $\cP$. One may assume that the differentials in the presenting matrix have the following form:
\begingroup\allowdisplaybreaks
\begin{align*}
    e_{v,v_1} &\mapsto \frac{m_{v,v_1}}{m_v} e_v -  \frac{m_{v,v_1}}{m_{v_1}} e_{v_1}, \\
    e_{v,v_2} &\mapsto \frac{m_{v,v_2}}{m_v} e_v -  \frac{m_{v,v_2}}{m_{v_2}} e_{v_2}, \\
    e_{v_1,v_2} &\mapsto \frac{m_{v_1,v_2}}{m_{v_1}} e_{v_1} -  \frac{m_{v_1,v_2}}{m_{v_2}} e_{v_2}. \\
\end{align*}
\endgroup
Since $x_{v_1} = x_{v_2}$, one has $m_{v, v_1} = m_{v,v_2} = m_{v_1,v_2}$. It is then clear that $e_{v,v_1} - e_{v,v_2} + e_{v_1 , v_2}$ is a cycle that cannot possibly be a boundary. By Theorem \ref{thm:linstrandequiv}, no such element can exist.
\end{proof}

\begin{theorem}\label{thm:cellularityMainRes}
Let $\cP$ be a CW complex supporting the linear strand of some monomial ideal $I$. Assume that $\cP$ is linearly connected with respect to $v$. Then the CW complex induced by removing all faces supported on $v$ is the linear strand of the monomial ideal with minimal generating set $G(I) \backslash m_v$. 
\end{theorem}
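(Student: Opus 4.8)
The plan is to realize the linear strand of $J := (G(I)\setminus m_v)$ as the kernel complex furnished by Corollary \ref{cor:howtogetLinStrand}, and then to identify that kernel, one homological degree at a time, with the free modules spanned by the faces of $\cP$ that avoid $v$. Concretely, I would invoke the iterated trimming apparatus of Setup \ref{setup4} with $t=1$, trimming off the single generator $m_v$ (so that $e_0^1$ is the basis vector of $F_1$ attached to the vertex $v$) and taking $\mfa_1 = (K' : K_0^1) = (K' : m_v)$, where $K'$ is generated by $G(I)\setminus m_v$. By the remark following Corollary \ref{cor:howtogetLinStrand} this choice forces $J = K'$, so Corollary \ref{cor:howtogetLinStrand} computes the linear strand of $J$ as $\cdots \to C_k \to \cdots \to C_2 \xrightarrow{d_2'} F_1' \to 0$ with $C_i = \ker\big((Q_{i-1})_{i+\ell}\big)$.

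The first substantive step is to pin down the resolution $G^1_\bullet$ entering the setup. By the Observation preceding Setup \ref{set:CellSetup}, the linear strand of $(K':m_v)=(I:g)$ coincides with the linear strand of the neighbor ideal $(x_{v'}\mid v'\in N_v)$. This is exactly where linear connectedness enters: Lemma \ref{lem:linConn} guarantees that the forms $x_{v'}$ are pairwise distinct variables, so that their Koszul complex is acyclic and minimal, hence equal to the Koszul complex $K_\bullet$ of Setup \ref{set:CellSetup} and to the linear strand of $(I:g)$. With $G^1_\bullet$ so identified, the comparison map $Q_\bullet$ may be taken to be the chain map of Lemma \ref{lem:theQmap}, which is well defined by Lemma \ref{lem:maxlSupp} and commutes with the differentials by the computation in the proof of Lemma \ref{lem:theQmap}.

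The heart of the argument is the computation of $\ker Q$ on the linear strand. The map $Q$ sends $e_P \mapsto c(P)\,e_{V(P)\cap N_v}$ when $v\in V(P)$ and $e_P\mapsto 0$ otherwise, so every face avoiding $v$ lies in the kernel automatically, and it remains to prove that $Q$ is injective on the span of the faces containing $v$. For such a $d$-dimensional face $P$, Lemma \ref{lem:maxlSupp} gives $|V(P)\cap N_v| = d$, while the opening observation in the proof of Lemma \ref{lem:existenceLem} (passing to a codimension-one face removes exactly one vertex) yields $|V(P)| = d+1$; since $v\in V(P)$ and $v\notin N_v$, we conclude $V(P)\cap N_v = V(P)\setminus v$. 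Thus $V(P)\cap N_v$ determines $V(P)$, which in turn determines $P$ by the injectivity of $P\mapsto V(P)$ from Lemma \ref{lem:injCorr}. As every $c(P)=\pm 1$, distinct $v$-supported faces are sent to distinct Koszul basis vectors with unit coefficients, so no cancellation occurs and $Q$ is injective there. Hence in each homological degree $C_i=\ker Q$ is precisely the free module on the faces of $\cP$ not containing $v$.

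Finally I would observe that this kernel complex \emph{is} the induced subcomplex of $\cP$ on $V(\cP)\setminus v$. Its boundary map is the restriction of the cellular differential of $F_\bullet$, and for any face $P$ with $v\notin V(P)$ every codimension-one subface $Q\subset P$ has $V(Q)\subseteq V(P)$ and so also avoids $v$; thus the restricted differential agrees with the cellular differential of the induced subcomplex, and no boundary terms are lost. Since an induced subcomplex of a regular CW complex is again a regular CW complex, this subcomplex is a CW complex supporting the linear strand of $J$, as claimed. The main obstacle is the clean identification underlying the two middle paragraphs: one must ensure that (i) the chosen $\mfa_1$ genuinely produces the Koszul complex $K_\bullet$ rather than a non-minimal or non-acyclic complex, which is exactly what distinctness of the $x_{v'}$ — that is, linear connectedness via Lemma \ref{lem:linConn} — secures, and (ii) $Q$ forgets nothing on the $v$-supported faces, which reduces to the vertex-counting identity $V(P)=v\sqcup(V(P)\cap N_v)$ combined with Lemma \ref{lem:injCorr}.
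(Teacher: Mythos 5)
Your proposal is correct and takes essentially the same route as the paper's proof: both invoke Corollary \ref{cor:howtogetLinStrand} with $\mfa_1 = (K':m_v)$, use the comparison map $Q_\bullet$ of Lemma \ref{lem:theQmap} into the Koszul complex (with linear connectedness, via Lemma \ref{lem:linConn}, guaranteeing this Koszul complex is the correct minimal linear target), and then show the kernel is spanned exactly by the faces avoiding $v$. The only difference is cosmetic: you establish injectivity of $Q$ on $v$-supported faces by the vertex count $V(P) = \{v\} \sqcup (V(P)\cap N_v)$ together with Lemma \ref{lem:injCorr}, whereas the paper intersects two putative faces with the same image and contradicts Lemma \ref{lem:maxlSupp} --- the same argument in different packaging, since Lemma \ref{lem:injCorr} is itself proved by that intersection trick.
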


\begin{proof}
Let $J$ denote the ideal with minimal generating set $G(I) \backslash m_v$. By Corollary \ref{cor:howtogetLinStrand}, the linear strand of $J$ is obtained by restricting to the kernel of each $Q_i$ map as in Lemma \ref{lem:theQmap}. Moreover, for every $V \subset N_v$, there is \emph{at most} one face with support containing $V$; if there were more than one such face, then the intersection of these faces would also be supported on $V$, a contradiction to Lemma \ref{lem:maxlSupp}. Thus, by Lemma \ref{lem:linConn}, the only possible basis elements contained in the kernel are those basis elements corresponding to faces that do not contain $v$. 
\end{proof}

\begin{theorem}\label{thm:disBnosCell}
Let $\cP$ be a CW complex supporting the linear strand of some monomial ideal $I$, and assume that every $1$-dimensional face of $\cP$ has a unique multidegree. Then the linear strand of any monomial ideal $J$ with $G(J) \subseteq G(I)$ is supported on the CW complex induced by restricting to the vertices corresponding to generators of $J$.

In particular, if $J$ has linear minimal free resolution, then $J$ has a CW minimal free resolution.
\end{theorem}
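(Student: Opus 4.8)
The plan is to obtain $J$ from $I$ by deleting the generators in $G(I)\setminus G(J)$ one vertex at a time, applying Theorem~\ref{thm:cellularityMainRes} at each step. The only discrepancy is that Theorem~\ref{thm:cellularityMainRes} is stated under the hypothesis that $\cP$ be linearly connected with respect to the vertex being removed, whereas here we are given only that distinct $1$-dimensional faces of $\cP$ carry distinct multidegrees. So the first task is to verify that the unique-multidegree hypothesis already yields the conclusion of Lemma~\ref{lem:linConn}, which is the only place linear connectedness is used in the proof of Theorem~\ref{thm:cellularityMainRes}: namely that $x_{v_i}\neq x_{v_j}$ for all distinct $v_i,v_j\in N_v$ and every vertex $v$.

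To see this, fix a vertex $v$ and suppose $v_i,v_j\in N_v$ are distinct with $x_{v_i}=x_{v_j}$. Then $\lcm(m_v,m_{v_i})=x_{v_i}m_v=x_{v_j}m_v=\lcm(m_v,m_{v_j})=:\mu$, so $m_v,m_{v_i},m_{v_j}$ are three distinct degree-$d$ generators all dividing the degree-$(d+1)$ monomial $\mu$; since any two distinct degree-$d$ monomials dividing $\mu$ have least common multiple exactly $\mu$, these three generators share all of their pairwise least common multiples. The space of linear first syzygies in multidegree $\mu$ is then spanned by the elements $\lambda_v(\mu/m_v)e_v+\lambda_i(\mu/m_{v_i})e_{v_i}+\lambda_j(\mu/m_{v_j})e_{v_j}$ with $\lambda_v+\lambda_i+\lambda_j=0$, and so has dimension at least $2$; as there are no syzygies of lower degree among distinct monomial generators, every such syzygy is minimal. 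Since $\cP$ supports the linear strand, the number of $1$-dimensional faces of $\cP$ of multidegree $\mu$ equals this dimension, so there are at least two such faces, contradicting the unique-multidegree hypothesis. Hence $x_{v_i}\neq x_{v_j}$.

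With the distinctness of the $x_{v'}$ established, the proof of Theorem~\ref{thm:cellularityMainRes} applies to $\cP$ and any chosen vertex $v$, with the appeal to Lemma~\ref{lem:linConn} replaced by the distinctness just proved: deleting $v$ produces the induced subcomplex on $V(\cP)\setminus\{v\}$, and this induced subcomplex supports the linear strand of the ideal with minimal generating set $G(I)\setminus m_v$. The hypotheses are inherited, since an induced subcomplex has its $1$-dimensional faces among those of $\cP$ (so distinct ones still carry distinct multidegrees) and it again supports the linear strand of a monomial ideal. I would therefore induct on $|G(I)\setminus G(J)|$, deleting the generators of $G(I)\setminus G(J)$ in any order; because each deletion yields the full induced subcomplex on the surviving vertices, after all deletions we arrive at the induced subcomplex of $\cP$ on the vertices corresponding to $G(J)$, and it supports the linear strand of $J$. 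This proves the main statement.

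For the final assertion, if $J$ has a linear minimal free resolution then that resolution coincides with its own linear strand, which the above realizes on the induced subcomplex; since an induced subcomplex of a regular CW complex is again a regular CW complex, $J$ has a CW minimal free resolution. The step I expect to be the crux is the one in the second paragraph: a neighbor $v_i\in N_v$ in the graph of linear syzygies need not a priori be joined to $v$ by a $1$-cell of $\cP$, so one cannot simply exhibit two edges of the same multidegree. The robust substitute is the syzygy-dimension count above, which translates the unique-multidegree condition into the bound of $1$ on the relevant graded Betti numbers and thence into the distinctness of the directions $x_{v'}$.
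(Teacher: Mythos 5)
Your proposal is correct and follows essentially the same route as the paper: the paper's own proof likewise reduces everything to showing that the unique-multidegree hypothesis forces $x_{v_1} \neq x_{v_2}$ for distinct neighbors $v_1, v_2 \in N_v$ of any vertex $v$ (substituting for Lemma \ref{lem:linConn}), and then runs the argument of Theorem \ref{thm:cellularityMainRes} vertex by vertex. The only difference is one of detail: the paper asserts that implication without proof, whereas you justify it via the syzygy-dimension count (correctly handling the subtlety that members of $N_v$ need not be joined to $v$ by a $1$-cell of $\cP$), and you make explicit the induction on $|G(I)\setminus G(J)|$ and the inheritance of hypotheses by induced subcomplexes.
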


\begin{proof}
The assumption that every $1$-dimensional face has a unique multidegree implies that for any vertex $v \in \cP$, every pair of distinct vertices $v_1 , v_2 \in N_v$ has the property that $x_{v_1} \neq x_{v_2}$. The proof then proceeds identically as in the proof of Theorem \ref{thm:cellularityMainRes}.
\end{proof}

\section{CW Posets and Cellularity of Sparse Eagon-Northcott Complexes}\label{sec:latticesAndCell}

In this section, we consider the question of when a particular complex may be supported on a CW complex. We first recall the construction of so-called \emph{frames} of monomial ideals, as described by Peeva and Velasco in \cite{peeva2011frames}. We then show that the poset $\po (E_\bullet^<)$ associated to the sparse Eagon-Northcott complex is a CW poset, whence Corollary \ref{cor:initisCellular} immediately establishes that $E_\bullet^<$ is CW for \emph{any} term order $<$. Combining this with the results of Section \ref{sec:linStrands}, we obtain the previously mentioned fact that every rainbow DFI has linear strand supported on a CW complex (see Corollary \ref{cor:rainbowisCellular}). 

\begin{definition}
An $r$-frame $U_\bullet$ is a finite complex of finite $k$-vector spaces with differential $\partial$ and a fixed basis satisfying
\begin{enumerate}
    \item $U_0 = k$,
    \item $U_1 = k^r$,
    \item $\partial (w_j) = 1$ for each basis vector $w_j \in U_1$. 
\end{enumerate}
\end{definition}

Recall that for any choice of $r$ monomials $m_1 , \dots , m_r$, an $r$-frame $U_\bullet$ may be homogenized to a (not necessarily acyclic) complex $F_\bullet$ with $H_0 (F_\bullet) = R / (m_1 , \dots , m_r)$. This is accomplished inductively by assigning a multidegree to a basis element $e_P$ by taking the lcm of the multidegrees of the basis elements appearing in the support of $\partial (e_P)$. Once the multigrading has been assigned, the differential may be computed as follows:
$$d (e_P) = \sum_{e_Q \in \supp (\partial (e_P))}  a_{P,Q} \frac{\mdeg (e_P)}{\mdeg(e_Q)} e_Q,$$
where the coefficients $a_{P,Q}$ are determined by the associated $r$-frame. Complete details are found in Construction $55.2$ of \cite{peeva2010graded}.

\begin{theorem}[{\cite[Theorem 4.14]{peeva2011frames}}]\label{thm:frameThm}
Let $R$ be a polynomial ring over a field and $I \subseteq R$ a monomial ideal. Then the $I$-homogenization of any frame of the multigraded minimal free resolution $F_\bullet$ of $R/I$ is $F_\bullet$.
\end{theorem}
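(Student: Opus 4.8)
The plan is to show that, for a minimal multigraded free resolution, the operation ``take the frame, then re-homogenize'' returns the resolution unchanged. First I would pin down the frame of $F_\bullet$: it is obtained by setting every variable $x_i$ equal to $1$. Since $I$ is a monomial ideal, $F_\bullet$ is $\ZZ^n$-graded, so each entry of a differential $d_i$ is homogeneous and is therefore a scalar multiple $a_{P,Q}\cdot \bigl(\mdeg(e_P)/\mdeg(e_Q)\bigr)$ of a single monomial (with the convention that the entry is $0$ unless $\mdeg(e_Q)$ divides $\mdeg(e_P)$). Dehomogenizing records exactly the scalars $a_{P,Q}$ and the supports $\supp(d_i(e_P))$, and the frame axioms $U_0=k$, $U_1=k^r$, $\partial(w_j)=1$ hold because $F_0=R$, $F_1=R^r$ on the $r$ minimal generators of $I$, and $d_1(w_j)=m_j\mapsto 1$. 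Re-homogenizing with respect to $m_1,\dots,m_r$ restores the multidegrees of the $U_1$-basis to $\mdeg(m_1),\dots,\mdeg(m_r)$ and then reconstructs all higher multidegrees by the lcm rule; since the reconstructed differential is $d(e_P)=\sum_{e_Q}a_{P,Q}\,\tfrac{\mdeg(e_P)}{\mdeg(e_Q)}\,e_Q$ with the same scalars and supports, it suffices to prove that the multidegrees of $F_\bullet$ already obey the lcm rule.

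The key step, and the main obstacle, is the claim: for every basis element $e_P$ of $F_i$ with $i\geq 2$, one has $\mdeg(e_P)=\lcm\{\mdeg(e_Q): e_Q\in\supp(d_i(e_P))\}$. Write $b:=\mdeg(e_P)$ and let $c$ denote this lcm. One direction is immediate, since each entry $\mdeg(e_P)/\mdeg(e_Q)$ is a genuine monomial, forcing $\mdeg(e_Q)$ to divide $b$ and hence $c$ to divide $b$. I would prove the reverse by contradiction: assume $c\neq b$, so that $\mu:=b/c$ is a nonunit monomial. Factoring $\mu$ out of the differential gives $d_i(e_P)=\mu\, z$, where $z:=\sum_Q a_{P,Q}\,(c/\mdeg(e_Q))\,e_Q$ lies in $(F_{i-1})_c$ and has the same support as $d_i(e_P)$; it is nonzero because $e_P$, being a basis element of a minimal resolution, cannot lie in $\ker d_i=\im d_{i+1}\subseteq \m F_i$, so $d_i(e_P)\neq 0$.

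Now I would invoke acyclicity and minimality together. Since $d_{i-1}(d_i(e_P))=0$ and $R$ is a domain, $\mu\, d_{i-1}(z)=0$ forces $d_{i-1}(z)=0$, so $z$ is a cycle in $F_{i-1}$; as $i-1\geq 1$, exactness at $F_{i-1}$ yields $z=d_i(w)$ for some $w\in (F_i)_c$. Then $d_i(e_P)=\mu\, d_i(w)=d_i(\mu w)$, whence $e_P-\mu w\in\ker d_i=\im d_{i+1}\subseteq\m F_i$ by minimality. But the coefficient of $e_P$ in $e_P-\mu w$ is $1-(\,\cdot\,)\mu$, which is a unit modulo $\m$ because $\mu\in\m$; hence $e_P-\mu w\notin\m F_i$, a contradiction. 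This establishes $b=c$ and thus the lcm rule for all $i\geq 2$.

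Finally I would assemble the pieces: the base multidegrees of $U_1$ are restored to $\mdeg(m_1),\dots,\mdeg(m_r)$ by construction, the lcm rule just proved shows the reconstructed multidegrees in every homological degree $\geq 2$ coincide with those of $F_\bullet$, and the differential formula then reproduces each entry $a_{P,Q}\cdot\bigl(\mdeg(e_P)/\mdeg(e_Q)\bigr)$ exactly. Hence the $I$-homogenization of the frame of $F_\bullet$ is $F_\bullet$. The only point worth double-checking is that the support is genuinely preserved under dehomogenization followed by rehomogenization, i.e.\ that $a_{P,Q}\neq 0$ precisely when the corresponding entry of $d_i$ is nonzero; this holds because setting $x_i=1$ neither creates nor destroys a nonzero scalar-times-monomial entry.
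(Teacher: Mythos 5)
Your proposal is correct. One point of comparison to flag: the paper does not actually prove this statement at all --- it is imported verbatim as a citation to Peeva--Velasco, so there is no in-paper argument to measure yours against; what you have written is a self-contained reconstruction of the cited result. Your reduction is the right one: the homogenization procedure only remembers the frame coefficients $a_{P,Q}$, the supports, and the lcm rule for assigning multidegrees, so the whole theorem comes down to your key claim that in a \emph{minimal} multigraded resolution one has $\mdeg(e_P)=\operatorname{lcm}\{\mdeg(e_Q): e_Q\in\supp(d_i(e_P))\}$ for $i\geq 2$, and your proof of that claim is sound: factoring $d_i(e_P)=\mu z$ with $\mu=\mdeg(e_P)/\operatorname{lcm}$, using that $R$ is a domain to get $d_{i-1}(z)=0$, exactness to write $z=d_i(w)$, and then minimality ($\operatorname{im} d_{i+1}\subseteq \mathfrak{m}F_i$) to contradict the unit coefficient $1-\mu c_P$ of $e_P$ in $e_P-\mu w$. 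This is exactly where minimality is indispensable, and you use it in both places it is needed (also to rule out $d_i(e_P)=0$). The only cosmetic remark is that your final ``point worth double-checking'' (supports preserved under dehomogenization) is immediate from the observation you already made, that every entry of a multigraded differential is a nonzero scalar times a single monomial, so it deserves a sentence rather than a caveat.
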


\begin{cor}\label{cor:linStrandHom}
Let $R$ be a polynomial ring over a field and $I \subseteq R$ a monomial ideal. Then the $I$-homogenization of any frame of the multigraded linear strand $F_\bullet^{\lin}$ of $R/I$ is $F_\bullet^{\lin}$. 
\end{cor}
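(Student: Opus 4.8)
The plan is to deduce this directly from Theorem \ref{thm:frameThm} by showing that, after a choice of multigraded basis, the linear strand sits inside the minimal free resolution $F_\bullet$ of $R/I$ as an honest subcomplex spanned by a subset of the basis, so that both the frame and the governing lcm-rule are simply inherited. Write $m_P = \mdeg(e_P)$ for each basis element $e_P$, let $\partial$ denote the frame differential of $F_\bullet$ (the structure constants with multidegrees stripped off), and recall that $F_\bullet^{\lin}$ is spanned in homological degree $i$ by the generators of $F_i$ of least degree, its differential obtained by keeping only the linear entries. The first point to establish is that a frame of $F_\bullet^{\lin}$ is nothing but the restriction of a frame of $F_\bullet$ to the linear basis elements, \emph{provided} one knows that the linear entries are the only entries emanating from a linear generator.

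The key step — and the only one with genuine content — is exactly that claim: in a minimal resolution every linear basis element maps only to linear basis elements. This is a degree count, which I carry out for $i \geq 2$ (the map $F_1^{\lin} \to F_0$ being the base case, governed directly by the prescribed generators). Let $d$ be the least degree of a minimal generator of $I$, so that the linear generators of $F_i$ have degree $d+i-1$; minimality forces the least degree of a generator of $F_{i-1}$ to be at least $d+i-2$. If $e_P$ is a linear generator of $F_i$ and $e_Q \in \supp(\partial e_P)$, then the corresponding entry of $d_i$ is a nonzero scalar times $m_P/m_Q$, which lies in $\m$ by minimality, so $\deg m_Q \leq (d+i-1)-1 = d+i-2$; together with the lower bound $\deg m_Q \geq d+i-2$ this forces $\deg m_Q = d+i-2$, i.e.\ $e_Q$ is again linear. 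Hence no linear generator has a non-linear generator in the support of its differential, and therefore $\supp(\partial^{\lin} e_P) = \supp(\partial e_P)$ for every linear $e_P$, where $\partial^{\lin}$ is the frame differential of $F_\bullet^{\lin}$.

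Finally I would run the homogenization. By Theorem \ref{thm:frameThm}, the $I$-homogenization of a frame of $F_\bullet$ returns $F_\bullet$; in particular the lcm-rule reproduces the true multidegrees, i.e.\ $m_P = \lcm\{\, m_Q \mid e_Q \in \supp(\partial e_P)\,\}$ for every basis element $e_P$ of homological degree at least $2$ (the degree-one multidegrees being the prescribed generators, which $F_\bullet$ and $F_\bullet^{\lin}$ share). Since the support of $\partial^{\lin} e_P$ coincides with that of $\partial e_P$ for linear $e_P$, the identical lcm computed inside the frame of $F_\bullet^{\lin}$ again yields $m_P$; thus the $I$-homogenization of a frame of $F_\bullet^{\lin}$ assigns each basis element its correct multidegree, after which the homogenization formula $d(e_P) = \sum_{e_Q} a_{P,Q}\,(m_P/m_Q)\, e_Q$ reproduces the linear strand differential verbatim. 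This identifies the homogenization with $F_\bullet^{\lin}$. The main obstacle is the degree-count lemma of the second paragraph: once one knows that linear generators cannot map to non-linear ones, everything else is formal, with Theorem \ref{thm:frameThm} supplying the lcm-identity that is then inherited by the sub-frame (note in particular that this identity forces the linear support of each $\partial^{\lin} e_P$ to involve at least two distinct variable-quotients, which is what makes the inherited lcm recover $m_P$ rather than a proper divisor).
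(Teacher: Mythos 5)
Your proposal is correct and takes essentially the same route as the paper: the paper's proof is the one-line observation that $F_\bullet^{\lin}$ is a subcomplex of the minimal free resolution, whence Theorem \ref{thm:frameThm} applies, and your argument simply fills in the details of that observation. Your degree count (linear generators can only map to linear generators, so the frame and the lcm-rule restrict) is precisely the content implicit in the paper's phrase ``viewing $F_\bullet^{\lin}$ as a subcomplex,'' so there is nothing to correct.
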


\begin{proof}
Viewing $F_\bullet^{\lin}$ as a subcomplex of the minimal free resolution, this follows immediately from Theorem \ref{thm:frameThm}.
\end{proof}

The following Definition is a method of associating any resolution (with a fixed basis in each homological degree) to a poset whose elements correspond to the basis elements. There is a natural way to try to associate any such poset to a face lattice of some geometric object, but in general this will not yield a cell or CW complex that is actually realizable in some Euclidean space. 

\begin{definition}
    Let $F_\bullet$ be a complex with a fixed basis $B_i$ in each homological degree $i$. The associated poset $\po (F_\bullet)$ is defined as follows: given basis elements $e \in F_i$, $f \in F_{i-1}$,
    $$f \leq e \iff f \in \supp_{B_{i-1}} (d^F (f)).$$
    Extending $\leq$ transitively yields a partial order on basis elements appearing in all homological degrees.
\end{definition}

\begin{example}
Let $\psi : F \to R$ be an $R$-module homomorphism, where $F$ is a free $R$-module. The induced Koszul complex is the complex $K_\bullet$ with $K_i = \bigwedge^i F$ and differential
\begingroup\allowdisplaybreaks
\begin{align*}
    \bigwedge^i F &\xrightarrow{\textrm{comultiplication}} F \otimes \bigwedge^{i-1} F \\
    & \xrightarrow{\psi \otimes 1} R \otimes \bigwedge^{i-1} F = \bigwedge^{i-1} F. \\
\end{align*}
\endgroup
Let $F$ have basis $f_1 , \dots , f_n$. If one chooses the standard bases for each exterior power $K_i$, the poset $\po (K_\bullet)$ is precisely the power set $2^{[n]}$ partially ordered by inclusion (ie, the Boolean poset).
\end{example}

The following Definition is due to Bj\"orner \cite{bjorner1984posets}.

\begin{definition}\label{def:CWposet}
    A poset $S$ is a \emph{CW-poset} if
    \begin{enumerate}[(a)]
        \item $S$ has a least element $0$,
        \item $S$ has more than $1$ element, and
        \item for all $x \in S \backslash \{ 0 \}$, the open intercal $(0 , x)$ is homeomorphic to a sphere.
    \end{enumerate}
\end{definition}

As previously mentioned, one can take the definition of a regular CW-complex to be such that its associated face poset is CW, by the following Theorem:

\begin{theorem}[{\cite[Proposition 3.1]{bjorner1984posets}}]
A poset $S$ is a CW poset if and only if it is isomorphic to the face poset of a regular CW-complex.
\end{theorem}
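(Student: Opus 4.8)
The plan is to prove both implications of Björner's characterization. Throughout I write $\bar S := S \setminus \{0\}$ for the proper part and $\Delta(Q)$ for the order complex (the simplicial complex of chains) of a poset $Q$, with $\|\Delta(Q)\|$ its geometric realization, which is the standard topological model attached to $Q$; the phrase ``$(0,x)$ is homeomorphic to a sphere'' in Definition \ref{def:CWposet} is to be read as $\|\Delta((0,x))\| \cong S^{k-1}$ for some $k$. The two directions are of very different character: realizing a regular CW complex's face poset as a CW poset is essentially bookkeeping with the regularity axioms, whereas reconstructing a regular CW complex from an abstract CW poset requires an inductive geometric construction, and that is where the real work lies.

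For the forward direction, suppose $\Gamma$ is a regular CW complex and let $S$ be its augmented face poset, i.e.\ the cells of $\Gamma$ together with a formal least element $0$ playing the role of the empty cell. Conditions (a) and (b) are immediate. For (c), I would use the defining feature of regularity: the closure $\bar c_x$ of each open cell $c_x$ is homeomorphic to a closed ball $D^{\dim c_x}$, its boundary is a sphere, and $\bar c_x$ is itself a regular CW subcomplex of $\Gamma$. The standard fact that the order complex of the face poset of a regular CW complex is homeomorphic to the complex (essentially its barycentric subdivision) then identifies $\|\Delta((0,x))\|$ with the boundary sphere $\partial \bar c_x \cong S^{\dim c_x - 1}$, which gives (c).

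For the converse, let $S$ be an abstract CW poset. First I would establish that $S$ is graded, with rank $\rho(x) := k$ when $\|\Delta((0,x))\| \cong S^{k-1}$ (so atoms, with empty open interval $\cong S^{-1}=\emptyset$, receive rank $0$ and become vertices). Well-definedness is automatic since the dimension of a sphere is a homeomorphism invariant; the content is that covers raise $\rho$ by exactly $1$. I would obtain this by induction on rank together with the join formula for order complexes: for $0 < y < x$ the link of the vertex $y$ in $\Delta((0,x))$ is $\Delta((0,y)) * \Delta((y,x))$, and once $\Delta((0,x))$ is known to be a combinatorial sphere its vertex links are spheres, forcing, when $y \lessdot x$, the relation $\rho(y)=\rho(x)-1$ out of $S^{\rho(y)-1} * S^{-1} \cong S^{\rho(x)-2}$. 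With grading in hand, I would build the regular CW complex $\Gamma$ by induction on $\rho$: suppose $S_{<k}$ has been realized as a regular CW complex in which, for each $x$ with $\rho(x)=k$, the realization $\|\Delta((0,x))\|$ sits as an embedded subcomplex homeomorphic to $S^{k-1}$; then for each such $x$ I attach a $k$-cell $c_x$ along that sphere via a homeomorphism. Because the attaching sphere is a genuine subcomplex (a union of lower cells), the attaching map is injective, so $\Gamma$ remains regular, and by construction $c_y \subseteq \bar c_x$ precisely when $y<x$, so the face poset of $\Gamma$ is isomorphic to $S$.

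The main obstacle is the grading/sphericity bootstrap inside the inductive construction: one must know that each $\|\Delta((0,x))\|$ is not merely a topological sphere but a \emph{combinatorial} (PL) sphere, so that its vertex links are again spheres and the attaching maps are honest homeomorphisms onto subcomplexes. This is exactly what the induction is designed to supply, since at stage $k$ the sphere $(0,x)$ has already been realized as the boundary of a regular cell assembled from strictly lower-rank pieces, so its order complex is a combinatorial sphere; but making the induction self-feeding, with sphericity at ranks below $k$ yielding well-behaved links and hence both the grading and the embeddedness needed at rank $k$, is the delicate part and must be organized carefully to avoid circularity.
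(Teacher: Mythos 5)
The paper itself offers no proof of this statement---it is imported verbatim from Bj\"orner---so your argument must stand on its own; its skeleton (forward direction via the barycentric-subdivision fact, converse via grading plus inductive cell attachment) is exactly Bj\"orner's, and the forward direction is fine. The genuine gap is in the converse, at precisely the point you flag as ``delicate.'' Your grading argument requires each $\Delta((0,x))$ to be a \emph{combinatorial} (PL) sphere, so that vertex links are again spheres; but the definition of a CW poset supplies only a homeomorphism $\|\Delta((0,x))\|\cong S^{k-1}$, and your proposed bootstrap---that at stage $k$ the interval $(0,x)$ ``has already been realized as the boundary of a regular cell assembled from strictly lower-rank pieces, so its order complex is a combinatorial sphere''---is false: being the boundary complex of a regularly attached cell does not make a triangulation PL. Concretely, let $H$ be a triangulated Poincar\'e homology $3$-sphere and $T=S^0 * S^0 * H$ its double suspension; by Edwards' theorem $\|T\|\cong S^5$, yet $T$ is not PL. Gluing two $6$-cells onto $\|T\|$ by homeomorphisms $\partial D^6\to\|T\|$ produces a regular CW complex, so its augmented face poset is a CW poset; but for a top cell $x$ the interval $(0,x)$ has order complex $\operatorname{sd}(T)$, in which the link of the vertex corresponding to an outer suspension point $v$ is $\Delta((v,x))\cong\operatorname{sd}(\Sigma H)$, which is not a sphere (it is not even a manifold). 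So vertex links in intervals of a CW poset genuinely need not be spheres, and no induction can manufacture the PL structure: the circularity you worry about is real and not resolvable along the lines you propose.

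The repair is to excise both appeals to PL structure, after which your construction closes up and coincides with Bj\"orner's proof. For gradedness, use purity instead of links: any simplicial complex whose realization is a closed $d$-manifold is pure of dimension $d$, because a maximal simplex $\sigma$ of dimension $e<d$ would have interior equal to the open star of its barycenter, hence an open subset of $\|\Delta((0,x))\|$ homeomorphic to $\mathbb{R}^e$, contradicting invariance of domain. Thus all maximal chains of $(0,x)$ have exactly $\rho(x)$ elements, and if $y$ is covered by $x$, adjoining $y$ to a maximal chain of $(0,y)$ yields a maximal chain of $(0,x)$, giving $\rho(y)=\rho(x)-1$ with no mention of links. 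For the attachment step, no combinatorial structure is needed either: regularity of a CW complex asks only that each attaching map be a homeomorphism onto its image, and the CW poset hypothesis hands you a homeomorphism $\partial D^{\rho(x)}\cong S^{\rho(x)-1}\cong\|\Delta((0,x))\|$ onto a subcomplex of the already-built skeleton. With these two repairs (purity in place of sphere links, topological rather than PL attachment), the induction is no longer self-feeding and the argument is complete.
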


\begin{prop}
   Let $S$ be a nontrivial poset with respect to some binary relation $\leq$. If:
   \begin{enumerate}[(a)]
       \item $S$ has a least element, denoted $0$.
       \item Every interval $[x,y]$ of length two has cardinality $4$; that is, $S$ is \emph{thin}.
       \item Every interval $[0,x]$, $x \in S$, is finite and shellable.
   \end{enumerate}
   Then, $S$ is a CW-poset.
\end{prop}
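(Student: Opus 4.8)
The plan is to invoke Bj\"orner's characterization (the Theorem immediately preceding this Proposition): it suffices to verify that $S$ satisfies the three conditions (a), (b), (c) of Definition \ref{def:CWposet} for being a CW-poset, namely that $S$ has a least element, has more than one element, and that every open interval $(0,x)$ for $x \in S \setminus \{0\}$ is homeomorphic to a sphere. Conditions (a) and (b) of the CW-poset definition are essentially hypotheses: the least element is given by hypothesis (a), and $S$ being nontrivial (hypothesis of the Proposition) together with having intervals means $S$ has more than one element. So the entire content of the proof reduces to establishing that each open interval $(0,x)$ is a sphere.

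The strategy for the sphere condition is to argue by induction on the length of the interval $[0,x]$, using the hypotheses in an essential way. For $x$ an atom, $(0,x)$ is empty, which we interpret as the $(-1)$-sphere, so the base case holds. For the inductive step, I would fix $x$ and consider the order complex $\Delta((0,x))$ of the open interval. The key tool is that a finite poset whose every proper interval below it is a sphere, which is \emph{thin} (condition (b): every length-two interval has exactly four elements, i.e. is a diamond), and which is \emph{shellable} (condition (c)), has order complex homeomorphic to a sphere. The thinness condition guarantees that each codimension-one face of the order complex of $[0,x]$ lies in exactly two maximal faces — this is precisely the combinatorial manifold condition (every ridge is contained in exactly two facets), and thinness plus shellability of a pure complex forces it to be a sphere (a shellable pseudomanifold in which every ridge lies on exactly two facets is a sphere). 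I would therefore first check purity of $[0,x]$, which follows from thinness together with shellability (shellable complexes arising here are pure, and thinness propagates a uniform rank function via the diamond property), and then apply the classical fact that a pure, thin, shellable complex is a sphere.

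Concretely, the key steps in order are: (1) verify conditions (a) and (b) of the CW-poset definition directly from the Proposition's hypotheses; (2) reduce the sphere condition to showing that for each $x$, the order complex $\Delta((0,x))$ is a sphere of the correct dimension; (3) establish that $[0,x]$ is \emph{graded/pure} by using thinness (condition (b)) to build a well-defined rank function, so that $\Delta((0,x))$ is pure of dimension $\operatorname{rank}(x)-2$; (4) invoke shellability (condition (c)) to conclude $\Delta((0,x))$ is homotopy equivalent to a wedge of top-dimensional spheres; (5) use thinness again to show the complex is a \emph{pseudomanifold with every ridge in exactly two facets}, which upgrades the shelling conclusion from ``wedge of spheres'' to ``a single sphere.'' The precise statement I would cite is that a shellable complex in which every maximal face has the same dimension and every codimension-one face is contained in exactly two maximal faces is homeomorphic to a sphere; this is standard in the theory of shellable complexes (e.g. Bj\"orner's work on shellability, or Bj\"orner--Wachs).

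The main obstacle I expect is the bookkeeping in step (3)–(5): carefully translating the poset-theoretic hypotheses (thinness as a diamond condition on length-two intervals, shellability of the closed intervals $[0,x]$) into the topological statement about the open-interval order complexes, and in particular handling the dimension shift between a shelling of $[0,x]$ and a shelling of $(0,x)$, as well as the low-rank base cases where the sphere in question is $S^{-1}$ or $S^0$. Thinness is what does the real work of ruling out a wedge of several spheres and forcing a single sphere, so the crux is verifying that thinness of $[0,x]$ gives exactly the two-facets-per-ridge property for $\Delta((0,x))$; once that is in hand, the combination with shellability yields the sphere and hence, by Bj\"orner's theorem, that $S$ is a CW-poset.
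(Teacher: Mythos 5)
The paper never actually proves this Proposition: it is quoted from Bj\"orner \cite{bjorner1984posets} (it is Proposition 2.2 there), so there is no internal argument to compare against. Your proposal reconstructs what is essentially Bj\"orner's own proof: reduce, via the characterization of CW-posets, to showing each order complex $\Delta((0,x))$ is a sphere; use shellability of $[0,x]$; translate thinness into the statement that every codimension-one face of $\Delta((0,x))$ lies in exactly two facets (a ridge omits a single rank $i$, and the facets containing it correspond to the elements strictly between $x_{i-1}$ and $x_{i+1}$, of which thinness supplies exactly two); and then invoke the Danaraj--Klee theorem that a pure shellable complex with this two-facets-per-ridge property is homeomorphic to a sphere. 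Steps (1), (2), (4), (5) of your outline are sound, and the crux you identify is the right one. (Two minor inefficiencies: the induction on interval length and the intermediate ``wedge of spheres'' conclusion are both unnecessary, since Danaraj--Klee applies directly to each closed interval.)

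The one genuine soft spot is step (3). Thinness does \emph{not} imply that $[0,x]$ is graded, and purity cannot be deduced from the other hypotheses if ``shellable'' is taken in the nonpure (Bj\"orner--Wachs) sense; indeed, with that reading the Proposition itself is false. Consider the poset with least element $\hat{0}$, atoms $a,b,b'$, elements $c,c'$ with $b,b'<c,c'$, and a top element $x$ covering $a$, $c$, and $c'$. Every length-two interval ($[\hat{0},c]$, $[\hat{0},c']$, $[b,x]$, $[b',x]$) has exactly four elements, and $a$ lies in no length-two interval since $[\hat{0},x]$ has length three, so the poset is thin; moreover $\Delta((\hat{0},x))$ is the $4$-cycle on $b,c,b',c'$ together with the isolated vertex $a$, which is shellable in the nonpure sense (shell the cycle, then append the vertex). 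Yet $(\hat{0},x)$ is a circle plus a point, not a sphere. So you must read ``shellable'' in its classical pure sense --- the convention of \cite{bjorner1984posets} and of Danaraj--Klee --- in which case purity of $\Delta((0,x))$, equivalently gradedness of $[0,x]$, is part of the hypothesis rather than something to be derived from thinness. With that convention fixed, your argument is correct as written.
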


As previously mentioned, choosing a face lattice defined by $\po (F_\bullet)$ does not always yield a CW complex. However, the following lemmas will be used to prove that (with respect to the basis represented by valid multidegrees), the poset associated to the sparse Eagon-Northcott complexes are the face poset of a regular CW-complex. For the definition of CL-shellable in the following result, see \cite[Definition 2.2]{bjorner1983lexicographically}.

\begin{lemma}\label{lem:CLshellable}
Adopt notation and hypotheses as in Theorem \ref{thm:sparseEN}. Then every interval of the form $[0,x]$ in $\po (E_\bullet^<)$ is finite and CL-shellable.
\end{lemma}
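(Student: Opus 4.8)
The plan is to produce, for each $x$, a \emph{recursive atom ordering} of the interval $[0,x]$; by the theorem of Bj\"orner and Wachs \cite{bjorner1983lexicographically} this is equivalent to CL-shellability. Throughout I identify a basis element with its multidegree $x_{1\bb_1}\cdots x_{n\bb_n}$, which is legitimate by Corollary \ref{cor:Bnos0or1} (each multidegree carries at most one basis element). Under this identification the elements of $[0,x]$ lying strictly above $0$ correspond to the tuples $(\bb_1',\dots,\bb_n')$ with $\emptyset\ne\bb_i'\subseteq\bb_i$: by Corollary \ref{cor:theBasisElts} every such sub-tuple is again a valid multidegree, since any transversal of $(\bb_1',\dots,\bb_n')$ is a transversal of $(\bb_1,\dots,\bb_n)$ and hence a minimal generator of $\inn_<I_n(M)$. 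The differential of Theorem \ref{thm:sparseEN} shows that the covering relation is exactly ``delete one column from one row,'' so the order on $[0,x]$ is componentwise inclusion of tuples together with the bottom $0$. Finiteness is then immediate, as $[0,x]$ is graded by homological degree and only finitely many monomials of each degree divide $\mdeg(x)$.

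First I would record the local structure. The atoms of $[0,x]$ are the homological-degree-one elements below $x$, i.e.\ the transversal generators $x_{1c_1}\cdots x_{nc_n}$ with $c_i\in\bb_i$, and I order them by the induced term order $<$. For a fixed atom $\mu=x_{1c_1}\cdots x_{nc_n}$, the upper interval $[\mu,x]$ is the set of sub-tuples of $x$ containing $\mu$, so it is isomorphic to $\prod_{i=1}^{n}2^{\bb_i\setminus\{c_i\}}$, a Boolean lattice of rank $\ell-1$. Observation \ref{obs:semimodOb} is the key input making this identification clean: it guarantees that whenever two one-column extensions of $\mu$ in distinct rows are valid, the simultaneous extension is valid as well, which is precisely the statement that $[\mu,x]$ is closed under joins and is therefore Boolean.

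Next I would verify the two axioms of a recursive atom ordering for the term-order listing $\mu_1<\cdots<\mu_t$ of the atoms, noting first the elementary sub-fact that \emph{any} linear ordering of the atoms of a Boolean lattice is a recursive atom ordering (axiom (ii) below is witnessed by joins, and its upper intervals are again Boolean). Axiom (ii), the ``going up'' condition, is supplied directly by Observation \ref{obs:AtomOrderobs}: if $\mu_i<\mu_j$ both lie below some $y\in[0,x]$, then $\lcm(\mu_i,\mu_j)$ is a sub-tuple of $y$, hence a valid multidegree, so the observation produces a row $r$ with $\mu_k:=x_{ra_r}\mu_j/x_{rb_r}\in G(\inn_<I_n(M))$ and $\mu_k<\mu_j$, where $a_r,b_r$ are the row-$r$ columns of $\mu_i,\mu_j$. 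This $\mu_k$ is an earlier atom, and $\beta:=\lcm(\mu_k,\mu_j)$ has row $r$ equal to $\{a_r,b_r\}$, so it covers $\mu_j$ and satisfies $\mu_k<\beta\le y$, exactly as required. Axiom (i) follows from the Boolean structure: each $[\mu_j,x]$ is Boolean, so by the sub-fact it admits a recursive atom ordering, and I may list first those of its atoms lying over some earlier $\mu_i$.

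Running the induction on the rank $\ell$ of $[0,x]$ (the base case $\ell=1$ being the two-element poset $\{0,x\}$) then produces a recursive atom ordering, and hence CL-shellability, of every $[0,x]$. I expect the main obstacle to be the bookkeeping in axiom (ii): one must check that the data produced by Observation \ref{obs:AtomOrderobs} assemble correctly — that $\beta=\lcm(\mu_k,\mu_j)$ is genuinely a cover of $\mu_j$ inside $[0,x]$, that $\beta\le y$, and that $\mu_k$ really precedes $\mu_j$ in the chosen order — and that these choices are compatible with the Boolean recursive orderings on the upper intervals. The identification of each $[\mu_j,x]$ with a Boolean lattice, for which Observation \ref{obs:semimodOb} is responsible, is what makes axiom (i) painless; without it one would be forced to shell these upper intervals by hand.
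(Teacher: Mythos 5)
Your proof is correct, and its skeleton matches the paper's: both order the atoms of $[0,x]$ by the term order, both use Observation \ref{obs:AtomOrderobs} to verify the ``going up'' axiom of a recursive atom ordering (with the same bookkeeping that the common cover $\lcm(\mu_k,\mu_j)$ divides $\mdeg(y)$ and hence lies below $y$), and both discharge the recursion axiom through the structure of the upper intervals $[\mu,x]$. Where you differ is in that last step, and your route is sharper. The paper proves only that each interval $[\mu,\nu]$ above an atom is \emph{totally semimodular} --- for which it needs Observation \ref{obs:semimodOb} to handle two covers in distinct rows --- and then invokes \cite[Theorem 5.1]{bjorner1983lexicographically} to conclude that every atom ordering of such an interval is recursive. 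You instead note that downward closure of valid multidegrees (immediate from Corollary \ref{cor:theBasisElts}, since a transversal of a sub-tuple is a transversal of the whole tuple) identifies $[0,x]\setminus\{0\}$ with the full set of sub-tuples of $x$, so each $[\mu,x]$ is literally a Boolean lattice $\prod_{i}2^{\bb_i\setminus\{c_i\}}$, and the fact that any atom ordering of a Boolean lattice is recursive is elementary. This buys a cleaner structural picture and makes both Observation \ref{obs:semimodOb} and the appeal to total semimodularity unnecessary. One small correction: you credit Observation \ref{obs:semimodOb} with making the Boolean identification work, but that observation (upward closure under joins) is not load-bearing in your argument --- once you work inside $[0,x]$, joins of sub-tuples of $x$ are again sub-tuples of $x$, hence valid by your own Corollary \ref{cor:theBasisElts} argument, so the citation is redundant rather than wrong.
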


\begin{proof}
Recall that a finite poset $S$ is semimodular if, for any two elements $u$ and $v$ covering some $x \in S$, there exists a $z \in S$ covering $u$ and $v$. The poset $S$ is totally semimodular if every interval is semimodular. We will first prove that for any elements $\mu, \nu  \in \po (E_\bullet^<)$ with $\nu \geq \mu$ and $\rank \mu \geq 1$, the interval $[\mu , \nu]$ is totally semimodular.

Let $\mu$ be any multidegree appearing in homological degree $ \geq 1$. The statement that $\mu$ is covered by two elements is equivalent to saying that $x_{i b} \mu$ and $x_{j c} \mu$ are valid multidegrees in $E_\bullet^<$ for some integers $i,j,b,c$. If $i=j$, then Corollary \ref{cor:theBasisElts} immediately yields that $x_{ib} x_{jc} \mu$ is a valid multidegree. If $i \neq j$, then one simply combines Observation \ref{obs:semimodOb} with Corollary \ref{cor:theBasisElts} to conclude that $x_{i b} \mu$ and $x_{j c} \mu$ are covered by some element of $[ \mu , \nu]$.

Next, the proof will be complete by showing that every interval of the form $[0,\nu]$ admits a recursive atom ordering; recall that CL-shellability is equivalent to the existence of a recursive atom ordering by \cite[Theorem 3.2]{bjorner1983lexicographically}. More precisely:

\textbf{Claim:} The ordering induced by the term order $<$ (on the multidegrees of) each atom of the interval $[0 , \nu]$ is a recursive atom ordering.

The claim will follow upon proving the following conditions (these conditions are a translation of the definition of a recursive atom ordering as given in \cite[Definition 3.1]{bjorner1983lexicographically}):
\begin{enumerate}
    \item If $\mu_1 < \mu_2$ are atoms of $\po (E_\bullet^<)$ such that $\lcm (\mu_1 , \mu_2)$ is a valid multidegree as in Corollary \ref{cor:theBasisElts}, then there exists a $\mu_3 < \mu_2$ such that $\mu_2$ and $\mu_3$ are both covered by some element.
    \item For every atom $\mu \in [0 , \nu]$, the interval $[ \mu , \nu]$ admits a recursive atom ordering for which any atom of $[\mu , \nu]$ that is also an atom of $[\mu' , \nu]$ for some $\mu' < \mu$ comes first in the order.
\end{enumerate}

\textbf{Proof of (1):} This is just a retranslation of the statement of Observation \ref{obs:AtomOrderobs}, so there is nothing to prove here.

\textbf{Proof of (2):} As proved above, every interval of the form $[\mu , \nu]$ is totally semimodular. By \cite[Theorem 5.1]{bjorner1983lexicographically}, \emph{every} atom ordering of a totally semimodular poset is a recursive atom ordering. Every atom of $[\mu , \nu]$ is of the form $x_{ia} \mu$ for some variable $x_{ia}$, implying that $x_{ia} \mu$ is \emph{always} an atom of $[\mu' , \nu]$ for $\mu' = \frac{x_{ia} \mu}{x_{ib}}$, where $x_{ib}$ divides $\mu$. Order all atoms of the form $x_{ia} \mu$ with $x_{ia} < x_{ib}$ with the term order $<$, then simply choose all other atoms to come after these terms. By construction this ordering satisfies $(2)$, and the result follows.
\end{proof}

\begin{lemma}\label{lem:sparseENmeetsemi}
Adopt notation and hypotheses as in Theorem \ref{thm:sparseEN}. Then $\po (E_\bullet^<)$ is a CW poset.
\end{lemma}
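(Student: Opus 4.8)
The plan is to verify the three hypotheses of the Proposition above, which reduces checking that a poset is a CW-poset to the conditions (a) existence of a least element, (b) thinness, and (c) finiteness and shellability of every interval $[0,x]$. Two of these are already in hand. For (a), the complex $E_\bullet^<$ has $E_0^< = \bigwedge^n G$ of rank $1$, so $\po(E_\bullet^<)$ has a unique basis element in homological degree $0$; since every covering relation raises homological degree by exactly one, this element is the least element $\hat 0$ and the poset is graded by homological degree. For (c), Lemma \ref{lem:CLshellable} shows that every interval $[\hat 0, x]$ is finite and CL-shellable, hence shellable. Thus the entire content of the lemma is condition (b): every length-two interval has cardinality $4$.

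To analyze covers I would use Theorem \ref{thm:sparseEN}: the differential sends a basis element of multidegree $\nu$ to a signed sum of basis elements of multidegrees $\nu/x_{ij}$, so $\sigma \lessdot \nu$ holds precisely when $\sigma = \nu/x_{ij}$ for a variable $x_{ij}$ removable from $\nu$ with $\nu / x_{ij}$ again a valid multidegree in the sense of Corollary \ref{cor:theBasisElts}. In particular $\sigma \le \nu$ forces $\sigma \mid \nu$, and each valid multidegree is squarefree, since the column indices occurring in a single row are distinct. Now take a length-two interval $[\mu, \nu]$ with $\mu \neq \hat 0$. Since $\mu$ and $\nu$ sit two homological degrees apart and $\mu \mid \nu$, the quotient $\nu/\mu$ is a squarefree degree-two monomial $x_{ib} x_{jc}$ with $x_{ib} \neq x_{jc}$. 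Any element strictly between must be a cover of $\nu$ of the form $\nu/x_{kd}$ that is divisible by $\mu$, which forces $x_{kd} \in \{x_{ib}, x_{jc}\}$; conversely both $x_{ib}\mu = \nu/x_{jc}$ and $x_{jc}\mu = \nu/x_{ib}$ are valid, because every transversal of $x_{ib}\mu$ (resp. $x_{jc}\mu$) is also a transversal of $\nu$ and hence a generator of $\inn_< I_n(M)$ by Corollary \ref{cor:theBasisElts}. This gives exactly two intermediate elements, so $[\mu,\nu]$ is a diamond.

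The main obstacle, and the case to treat with care, is the bottom interval $[\hat 0, \tau]$ with $\tau$ in homological degree $2$: here the pattern ``$\nu/\mu$ has degree two'' breaks, since the covering $\hat 0 \lessdot (\text{generator})$ jumps from multidegree $1$ to a degree-$n$ monomial. Instead I would argue directly that the elements strictly between $\hat 0$ and $\tau$ are exactly the minimal generators of $\inn_< I_n(M)$ dividing $\tau$. By Corollary \ref{cor:theBasisElts}, $\tau$ has total degree $n+1$ and, being in homological degree $2$, has exactly one row carrying two columns and every other row carrying one; a degree-$n$ generator divides $\tau$ iff it is a transversal, and there are precisely two transversals, both generators by validity. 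Hence again the interval has cardinality $4$. Having verified (a), (b), and (c), the Proposition yields that $\po(E_\bullet^<)$ is a CW-poset; I would also note in passing that the same transversal description shows the poset is a meet-semilattice, which is the form in which Corollary \ref{cor:theBasisElts} was advertised to be used.
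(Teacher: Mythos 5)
Your proof is correct and takes essentially the same approach as the paper: both verify the three conditions of the quoted Proposition, handling condition (c) by citing Lemma \ref{lem:CLshellable} together with the fact that CL-shellability implies shellability. The only difference is that where the paper disposes of thinness in one line --- as a consequence of $E_\bullet^<$ being a linear resolution with unique multigraded Betti numbers --- you verify it directly (including the bottom intervals $[\hat{0},\tau]$) via the transversal description of Corollary \ref{cor:theBasisElts}; this is a fleshed-out version of the paper's same argument rather than a genuinely different route.
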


\begin{proof}
It is clear that $E_\bullet^<$ has a least element, represented by the multidegree $1$. Moreover, $\po (E_\bullet^<)$ is thin because $E_\bullet^<$ is a linear resolution with unique multigraded Betti numbers. Thus the only nontrivial property to check is that $[0, x]$ is shellable for all $x \in \po (E_\bullet^<)$. 

By \cite[Proposition 2.3]{bjorner1983lexicographically}, any CL-shellable poset is also shellable, whence the result follows from Lemma \ref{lem:CLshellable}.
\end{proof}

Combining the previous two results immediately yields the following:

\begin{cor}\label{cor:initisCellular}
Let $R = k[x_{ij} \mid 1 \leq i \leq n, \ 1 \leq j \leq m]$ be a polynomial ring over an arbitrary field $k$ and $M$ be an $n \times m$ matrix of variables in $R$ where $n\leq m$. Let $<$ denote any term order. Then $R/\inn_< I_n (M)$ has minimal free resolution supported on a regular CW complex.
\end{cor}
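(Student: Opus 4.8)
The plan is to identify the minimal free resolution of $R/\inn_< I_n(M)$ with the sparse Eagon-Northcott complex $E_\bullet^<$ and then upgrade the CW-poset structure on $\po(E_\bullet^<)$ into an honest regular CW complex that supports $E_\bullet^<$. In other words, the entire corollary should fall out by concatenating the minimal-resolution statement for $E_\bullet^<$, the poset computation of Lemma \ref{lem:sparseENmeetsemi}, Bj\"orner's realization theorem, and the Peeva--Velasco frame formalism.

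First I would recall (following Boocher, as noted after the definition of the sparse Eagon-Northcott complex) that $E_\bullet^<$ is precisely the minimal free resolution of $R/\inn_< I_n(M)$; this reduces the problem to exhibiting a regular CW complex supporting $E_\bullet^<$. Next I would invoke Lemma \ref{lem:sparseENmeetsemi}, which asserts that the associated poset $\po(E_\bullet^<)$ is a CW poset. By Bj\"orner's characterization (the theorem cited immediately after Definition \ref{def:CWposet}), every CW poset is isomorphic to the face poset of a regular CW complex, so I obtain a regular CW complex $\cP$ whose face poset is $\po(E_\bullet^<)$.

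The substantive remaining step is to verify that $\cP$ actually \emph{supports} $E_\bullet^<$ as a cellular resolution, rather than merely sharing its underlying combinatorial poset. For this I would use the frame machinery: the cellular chain complex of $\cP$ furnishes a frame whose underlying poset is exactly $\po(E_\bullet^<)$, and by Corollary \ref{cor:Bnos0or1} every multigraded Betti number of $R/\inn_< I_n(M)$ is at most $1$, so each basis element carries a unique multidegree and the frame is essentially rigid. Homogenizing this frame with respect to $\inn_< I_n(M)$ and applying Theorem \ref{thm:frameThm} then recovers $E_\bullet^<$, which is exactly the statement that $E_\bullet^<$ is supported on $\cP$.

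I expect the main obstacle to be precisely this last reconciliation: one must check that the incidence function arising from the regular CW structure on $\cP$ agrees, up to the sign ambiguity inherent in rescaling basis elements, with the differential coefficients of $E_\bullet^<$. Both are governed by the common poset and both satisfy $d^2 = 0$, and the incidence functions of a fixed CW poset are unique up to a coboundary; combined with the $\leq 1$ bound on multigraded Betti numbers from Corollary \ref{cor:Bnos0or1}, this rigidity forces the two frames to coincide after homogenization. Once that identification is in hand, the corollary follows immediately, and every other ingredient is a direct application of Lemma \ref{lem:sparseENmeetsemi}, Bj\"orner's theorem, and Theorem \ref{thm:frameThm}.
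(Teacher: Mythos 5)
Your proposal is correct and follows essentially the same route as the paper's proof: identify $E_\bullet^<$ as the minimal free resolution (via Boocher), apply Lemma \ref{lem:sparseENmeetsemi} together with Bj\"orner's realization theorem to obtain a regular CW complex with face poset $\po (E_\bullet^<)$, and use frame homogenization (Theorem \ref{thm:frameThm}) to conclude that this complex supports $E_\bullet^<$. The only difference is that you spell out the sign-reconciliation step---that the $\pm 1$ coefficients of the frame agree with an incidence function of the realized CW complex up to rescaling of basis elements, since incidence functions on a CW poset are unique up to such a coboundary---whereas the paper's proof leaves this implicit in the assertion that ``the frame may be supported on a regular CW complex.''
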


\begin{proof}
Observe that the frame associated to the sparse Eagon-Northcott complex has coefficients $0$ or $\pm 1$. By Lemma \ref{lem:sparseENmeetsemi}, the frame may be supported on a regular CW complex, and the homogenization of this frame is precisely $E_\bullet^<$. 
\end{proof}

\begin{remark}
For the remainder of the paper, no distinction will be made between the poset $\po (E_\bullet^<)$ and the CW complex that it induces.
\end{remark}

 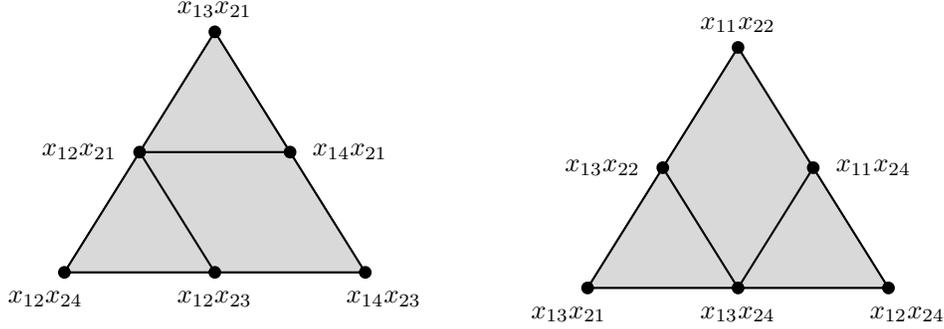
\begin{figure}
    \begin{minipage}{0.45\textwidth}
        \begin{tikzpicture}[]
\draw[fill=gray!30]   (-2,0) -- (2,0) -- (0,3.2) -- (-2,0);
\filldraw[thick]
(-2,0) circle (2pt) --
(0,0) circle (2pt)  -- 
(2,0) circle (2pt) --
(1,1.6) circle (2pt) --
(0,3.2) circle (2pt) --
(-1,1.6) circle (2pt) --
(-2,0);
\draw[thick] (0,0)--(-1,1.6)--(1,1.6);
\node at (-2.25,-0.35) {$x_{12} x_{24}$};
\node at (0,-0.35) {$x_{12} x_{23}$};
\node at (2.25,-0.35) {$x_{14} x_{23}$};
\node at (-1.8,1.6) {$x_{12} x_{21}$};
\node at (1.8, 1.6) {$x_{14} x_{21}$};
\node at (0, 3.5) {$x_{13} x_{21}$};
\end{tikzpicture}
\label{fig: boxCplx} 
    \end{minipage}\hfill
    \begin{minipage}{0.45\textwidth}
        \begin{tikzpicture}[]
\draw[fill=gray!30]   (-2,0) -- (2,0) -- (0,3.2) -- (-2,0);
\filldraw[thick]
(-2,0) circle (2pt) --
(0,0) circle (2pt)  -- 
(2,0) circle (2pt) --
(1,1.6) circle (2pt) --
(0,3.2) circle (2pt) --
(-1,1.6) circle (2pt) --
(-2,0);
\draw[thick] (1,1.6)--(0,0)--(-1,1.6);
\node at (-2.25,-0.35) {$x_{13} x_{21}$};
\node at (0,-0.35) {$x_{13} x_{24}$};
\node at (2.25,-0.35) {$x_{12} x_{24}$};
\node at (-1.8,1.6) {$x_{13} x_{22}$};
\node at (1.8, 1.6) {$x_{11} x_{24}$};
\node at (0, 3.5) {$x_{11} x_{22}$};
\end{tikzpicture}
    \end{minipage}
    \caption{Two examples of cellular resolutions arising from initial ideals of maximal minors of a $2 \times 4$ matrix.}
    \end{figure}

Using the results of Section \ref{sec:linStrands}, Corollary \ref{cor:initisCellular} immediately implies the much stronger result that \emph{every} rainbow DFI has linear strand supported on a CW complex.



\begin{cor}\label{cor:rainbowisCellular}
Adopt notation and hypotheses as in Corollary \ref{cor:initisCellular}. Let $\Delta$ be an $(n-1)$-dimensional pure simplicial complex. Then the linear strand of $\rain_< (J_\Delta)$ is supported on a CW complex. 

In particular, if $\rain_< (J_\Delta)$ has linear resolution, then $\rain_< (J_\Delta)$ has a CW resolution.
\end{cor}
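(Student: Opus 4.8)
The plan is to derive Corollary \ref{cor:rainbowisCellular} by combining the cellularity of the full initial ideal (Corollary \ref{cor:initisCellular}) with the subideal machinery of Section \ref{sec:linStrands}, specifically Theorem \ref{thm:disBnosCell}. The key observation is that, by the Observation following Definition \ref{def: DFI}, the generators of $\rain_< (J_\Delta)$ form a subset of the generators of $\inn_< I_n(M)$: indeed $\rain_< (J_\Delta) + \rain_< (J_{\Delta^\vee}) = \inn_< I_n(M)$, so $G(\rain_< (J_\Delta)) \subseteq G(\inn_< I_n (M))$. Thus I would set $I := \inn_< I_n(M)$ and $J := \rain_< (J_\Delta)$, and the inclusion $G(J) \subseteq G(I)$ is exactly the hypothesis needed to apply Theorem \ref{thm:disBnosCell}.

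First I would invoke Corollary \ref{cor:initisCellular} to obtain a CW complex $\cP$ (namely $\po (E_\bullet^<)$) supporting the minimal free resolution, hence in particular the linear strand, of $R/\inn_< I_n(M)$. The one genuine hypothesis of Theorem \ref{thm:disBnosCell} that must be verified is that every $1$-dimensional face of $\cP$ carries a unique multidegree. This is where I would point to the structure of the sparse Eagon-Northcott complex: by Corollary \ref{cor:Bnos0or1}, every multigraded Betti number of $R/\inn_< I_n(M)$ is at most $1$, so no two distinct basis elements in a given homological degree share a multidegree. In homological degree corresponding to $1$-dimensional faces (the first syzygies, $\ell = 2$ in the notation of Theorem \ref{thm:sparseEN}), this immediately forces each edge of $\cP$ to have a distinct multidegree.

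With that hypothesis in hand, Theorem \ref{thm:disBnosCell} applies verbatim: the linear strand of any monomial ideal $J$ with $G(J) \subseteq G(I)$ is supported on the CW complex obtained by restricting $\cP$ to the vertices corresponding to the generators of $J$. Taking $J = \rain_< (J_\Delta)$ gives the first assertion, that the linear strand of $\rain_< (J_\Delta)$ is supported on a (regular) CW complex. For the ``in particular'' statement, I would simply note that when $\rain_< (J_\Delta)$ has linear resolution, the minimal free resolution \emph{equals} its own linear strand, so the induced subcomplex of $\cP$ on the relevant vertices supports the entire minimal free resolution; this is precisely the conclusion recorded in the final sentence of Theorem \ref{thm:disBnosCell}.

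I expect the proof to be short, as the heavy lifting has already been done. The only step requiring care is the verification that distinct edges of $\po (E_\bullet^<)$ have distinct multidegrees; the main obstacle is making sure this follows cleanly from the $\ZZ^{nm}$-graded Betti numbers being bounded by $1$ rather than merely from the coarser $\ZZ^n \times \ZZ^m$-grading of Corollary \ref{cor:Bnos0or1}. Since every $\ZZ^{nm}$-graded resolution refines the $\ZZ^n \times \ZZ^m$-grading and the edges correspond to the first syzygies whose multidegrees are assigned as least common multiples of the generating monomials, this uniqueness is guaranteed and the reduction to Theorem \ref{thm:disBnosCell} is complete.
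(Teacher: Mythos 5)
Your proposal is correct and follows exactly the paper's own argument: the paper proves this corollary by citing Corollary \ref{cor:initisCellular}, Theorem \ref{thm:disBnosCell}, and Corollary \ref{cor:Bnos0or1}, which are precisely the three ingredients you combine (including using Corollary \ref{cor:Bnos0or1} to verify the unique-multidegree hypothesis on edges). Your write-up is simply a more detailed spelling-out of what the paper calls ``immediate.''
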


\begin{proof}
This is immediate by Corollary \ref{cor:initisCellular} combined with Theorem \ref{thm:disBnosCell} and Corollary \ref{cor:Bnos0or1}.
\end{proof}

\section{Preservation of Regularity for Subideals and Linearity}\label{sec:regandLinearity}

Corollary \ref{cor:rainbowisCellular} establishes that any rainbow DFI with linear resolution must have a CW resolution. This immediately begs the question of which rainbow DFIs have linear resolution. In this section, we consider conditions that guarantee linearity. We first study rainbow DFIs for which the sequence of variable differences 
$$\{ x_{11} - x_{21} , \dots, x_{11} - x_{n1} \} \cup \cdots \cup \{ x_{1m} - x_{2m} , \dotsc , x_{1m} - x_{nm} \}$$
forms a regular sequence on the associated quotient ring. This can be used to show that certain ideals for which the squarefree complement (see Definition \ref{def:monomIdealDefs}) has no linear relations must have CW resolution. 

Next, using a result of Nagel and Reiner \cite{NR09}, we see that rainbow DFIs with the property that the Alexander dual $\Delta^\vee$ admits an ordering for which it is a \emph{free sequence} (see Definition \ref{def:freeSeq}) must have linear resolution. Moreover, if the elements of the Alexander dual have nonmaximal overlap, this condition is an equivalence; the Betti table for these ideals can also be computed explicitly.

We begin with some definitions and notation related to monomial ideals and their complementary ideals.

\begin{definition}\label{def:monomIdealDefs}
Let $R = k[x_1 , \dots , x_n]$ be a standard graded polynomial ring over a field $k$. Let $K$ denote an equigenerated monomial ideal with generators in degree $d$. Define 
$$G(K) := \textrm{unique minimal generating set of} \ K \ \textrm{consisting of monic monomials.}$$
Given a monomial ideal $K$, define the (squarefree) complementary ideal $\overline{K}$ to be the ideal with minimal generating set:
$$G(\overline{K}) = \begin{cases}
    \{ \textrm{degree} \ d \ \textrm{squarefree monomials} \} \backslash G(K) & \textrm{if} \ K \ \textrm{squarefree}, \\
    \{ \textrm{degree} \ d \ \textrm{monomials} \} \backslash G(K) & \textrm{otherwise.} \\
    \end{cases}$$
\end{definition}

\begin{lemma}[{\cite{vdb2020maxlideal}}]\label{lem:linearResLem}
Let $K'$ be a squarefree equigenerated momomial ideal of degree $n$ with $\overline{K'} = (x^{I_1}, \dots , x^{I_r})$ such that $|I_i \cap I_j| < n-1$. Then $R/K'$ has a linear minimal free resolution.
\end{lemma}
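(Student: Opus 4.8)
The plan is to reduce the statement to a Cohen--Macaulayness assertion about an explicitly described simplicial complex via Alexander duality, and then to verify that assertion using Reisner's criterion. Write $N$ for the number of variables of $R$ (for the statement to be nonvacuous we have $N>n$), and regard the $I_j$ as $n$-element subsets of $[N]$. Since $K'$ is squarefree it is the Stanley--Reisner ideal $I_\Sigma$ of a complex $\Sigma$ on $[N]$, whose minimal nonfaces are exactly the $n$-subsets $G$ with $x^G\in G(K')$, i.e. all $n$-subsets except $I_1,\dots,I_r$. By the theorem of Eagon and Reiner, $K'=I_\Sigma$ has a linear minimal free resolution if and only if the Stanley--Reisner ring $k[\Sigma^\vee]$ of the Alexander dual is Cohen--Macaulay, so it suffices to establish the latter.

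First I would identify $\Sigma^\vee$ explicitly. A direct computation from the nonfaces of $\Sigma$ shows that, setting $d:=N-n-1$, the dual $\Sigma^\vee$ consists of the full $(d-1)$-skeleton of the simplex on $[N]$ together with every $d$-face (every $(d+1)$-subset) except the complements $F_j:=[N]\setminus I_j$. Thus $\Sigma^\vee$ is the $d$-skeleton of the simplex with the facets $F_1,\dots,F_r$ deleted. The hypothesis $|I_i\cap I_j|<n-1$ translates, via $|F_i\cap F_j|=N-|I_i\cup I_j|=N-2n+|I_i\cap I_j|$, into $|F_i\cap F_j|\le d-1$; that is, \emph{no two deleted facets share a ridge} (a common codimension-one face). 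A short argument using this ridge-disjointness --- any $(d-1)$-face lies in at most one $F_j$, and it has $n+1\ge 2$ coface candidates --- shows every $(d-1)$-face of $\Sigma^\vee$ still extends to a $d$-face, so $\Sigma^\vee$ is pure of dimension $d$.

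The heart of the proof, which I expect to be the main obstacle, is to check Reisner's criterion: $\widetilde H_i(\operatorname{lk}_{\Sigma^\vee}(\tau);k)=0$ for every face $\tau$ and all $i<\dim\operatorname{lk}_{\Sigma^\vee}(\tau)$. The links reduce the problem to itself: for $\tau$ of size $s$ one computes $\operatorname{lk}_{\Sigma^\vee}(\tau)$ to be the $(d-s)$-skeleton of the simplex on $[N]\setminus\tau$ with the facets $\{F_j\setminus\tau:\tau\subseteq F_j\}$ deleted, and these stay pairwise ridge-disjoint since $|(F_i\setminus\tau)\cap(F_j\setminus\tau)|=|F_i\cap F_j|-s\le(d-s)-1$. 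So it is enough to prove: deleting pairwise ridge-disjoint facets from the $d$-skeleton of a simplex yields a pure complex $\Gamma$ with $\widetilde H_i(\Gamma;k)=0$ for all $i<d$. Vanishing for $i\le d-2$ is immediate because $\Gamma$ retains the full $(d-1)$-skeleton. For $i=d-1$ I would delete facets one at a time and run the long exact sequence of the pair $(\Gamma',\Gamma'\setminus F)$; excision identifies the relative homology with that of $(\overline F,\partial\overline F)$, which is $k$ in degree $d$, so the connecting map shows $\widetilde H_{d-1}$ stays zero exactly when some $d$-cycle of $\Gamma'$ has nonzero coefficient on $F$. The boundary of any $(d+1)$-simplex $\sigma\supseteq F$ furnishes such a cycle, and ridge-disjointness guarantees that none of the previously deleted facets appears in $\partial\overline\sigma$ (a deleted $F_i$ appearing would share the ridge $F\setminus\{v\}$ with $F$), so this witnessing cycle survives every deletion. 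This is the one place where the hypothesis is genuinely used, and making the bookkeeping of surviving witnessing cycles precise is the crux.

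Finally, I would remark that an alternative, more self-contained route stays inside the machinery of this paper: writing $K'$ as obtained from the ideal of all squarefree degree-$n$ monomials by deleting the generators $x^{I_1},\dots,x^{I_r}$ one at a time, one checks that at each step the relevant colon ideal is $(x_j\mid j\notin I_k)$ --- this is precisely where $|I_i\cap I_j|<n-1$ enters, guaranteeing that $x_jx^{I_k}$ is divisible by a remaining generator --- and then feeds the resulting short exact sequence into the linear-strand criterion of Theorem \ref{thm:linstrandequiv}. The obstacle there is the same in disguise: one must show the induced map on the top-degree part of $\operatorname{Tor}$ is surjective, which is the homological shadow of the surviving-cycle argument above.
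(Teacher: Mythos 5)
Your argument is correct, but note that this paper never proves Lemma \ref{lem:linearResLem}: it is quoted without proof from \cite{vdb2020maxlideal}, where it is established by explicitly constructing the minimal free resolution of $R/K'$ via the trimming-complex machinery that reappears in Section \ref{sec:linStrands} of the present paper. That construction is essentially the ``alternative route'' you sketch in your final paragraph, and your computation of the colon ideals $(J : x^{I_k}) = (x_j \mid j \notin I_k)$ is exactly where the hypothesis $|I_i \cap I_j| < n-1$ enters in that approach as well. Your main argument --- Eagon--Reiner plus Reisner's criterion --- is a genuinely different route, and it holds up: the identification of $\Sigma^\vee$ as the $d$-skeleton of the simplex minus the pairwise ridge-disjoint facets $F_j = [N]\setminus I_j$, the preservation of ridge-disjointness under taking links, the purity check, and the inductive deletion argument are all sound; in particular the witnessing cycle $\partial\overline{\sigma}$ with $\sigma = F_j \cup \{v\}$ does avoid every other deleted facet, since a coincidence $(F_j\setminus\{w\})\cup\{v\} = F_i$ would force $|F_i \cap F_j| \geq d$, contradicting ridge-disjointness (and note the induction at step $j$ only needs this cycle to live in the complex before $F_j$ is deleted, which your argument gives). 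The trade-off between the two approaches is this: your topological proof is shorter and self-contained but yields only the \emph{existence} of a linear resolution (equivalently, Cohen--Macaulayness of the dual), whereas the construction in \cite{vdb2020maxlideal} produces the resolution and its graded Betti numbers explicitly --- data the present paper actually consumes later, since Corollary \ref{cor:Btab} imports the Betti table from \cite[Corollary 5.12]{vdb2020maxlideal} and Corollary \ref{cor:polarizationsCor} relies on the resulting regularity computation. So your proof fully suffices for the lemma as stated, but not for everything the paper cites \cite{vdb2020maxlideal} for.
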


It is worth noting that \cite{vdb2020maxlideal} not only shows that ideals as in the above statement have linear resolution, but also produces the minimal free resolution explicitly. As previously mentioned, we wish to study when the sequence of variable differences
$$\{ x_{11} - x_{21} , \dots, x_{11} - x_{n1} \} \cup \cdots \cup \{ x_{1m} - x_{2m} , \dotsc , x_{1m} - x_{nm} \}$$
forms a regular sequence on the quotient defined by a rain DFI. This problem can be easily abstracted to the following question, which we call the \emph{preservation of regularity}:
\begin{center}
    If $J \subseteq I$ and $\mfa$ is regular on $R/I$, then when is $\mfa$ regular on $R/J$?
\end{center}
The initial answer to this question, although simple, turns out to be surprisingly useful.

\begin{lemma}\label{lem:PresReg}
Let $R$ be a commutative ring and let $\mfa$ be regular on $R/ (J,g)$. Then $\mfa$ is regular on $R/J$ if and only if $\mfa$ is regular on $(J:g)$.
\end{lemma}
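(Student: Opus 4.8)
The plan is to realize the three modules $R/J$, $R/(J,g)$, and $R/(J:g)$ as the terms of a single short exact sequence and then read off the comparison of regularity from the induced long exact sequence in Koszul homology; throughout I read ``$\mfa$ regular on $(J:g)$'' as regularity on the cyclic module $R/(J:g)$, which is the version for which the statement holds. The first step is to record the short exact sequence
\[
0 \to R/(J:g) \xra{\cdot g} R/J \to R/(J,g) \to 0,
\]
where multiplication by $g$ sends $r + (J:g)$ to $rg + J$. This map is well defined and injective precisely because $rg \in J$ if and only if $r \in (J:g)$, and its image is $(J,g)/J$, so the cokernel is $R/(J,g)$. Thus the colon module sits as the submodule and $R/(J,g)$ as the quotient of $R/J$ in one exact sequence.

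The second step is to apply Koszul homology $\HH_\bullet(\mfa; -)$ with respect to $\mfa$ to this sequence. Since the Koszul complex consists of free modules, tensoring preserves exactness, and one obtains the long exact sequence
\[
\cdots \to \HH_i(\mfa; R/(J:g)) \to \HH_i(\mfa; R/J) \to \HH_i(\mfa; R/(J,g)) \to \HH_{i-1}(\mfa; R/(J:g)) \to \cdots.
\]
The hypothesis that $\mfa$ is regular on $R/(J,g)$ says exactly that $\HH_i(\mfa; R/(J,g)) = 0$ for all $i \geq 1$. Feeding this vanishing into the long exact sequence deletes every such connecting term, so the maps induced by $\cdot g$ become isomorphisms $\HH_i(\mfa; R/(J:g)) \cong \HH_i(\mfa; R/J)$ for all $i \geq 1$.

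The third step is to translate back. In the graded (equivalently, local Noetherian) setting of the paper, a homogeneous sequence is regular on a finitely generated module if and only if its positive Koszul homology vanishes; hence $\HH_i(\mfa; R/J) = 0$ for all $i \geq 1$ if and only if $\HH_i(\mfa; R/(J:g)) = 0$ for all $i \geq 1$, which is precisely the desired equivalence. When $\mfa$ is a single element this is just the snake lemma applied to multiplication by that element: the hypothesis forces the kernel of multiplication on $R/J$ to coincide with the kernel on the submodule $R/(J:g)$, so one vanishes exactly when the other does.

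The step I expect to require the most care is the first one, namely fixing the correct interpretation of the right-hand module and verifying the exactness of the displayed sequence. The point worth emphasizing is that the equivalence genuinely needs $R/(J:g)$ and fails for the naive alternatives (the ideal $(J:g)$ itself, or the subquotient $(J:g)/J$): for those the long exact sequence only yields an injection rather than an isomorphism of Koszul homologies, so the converse direction breaks down. Everything after the short exact sequence is formal, driven entirely by the vanishing coming from the hypothesis; if one wishes to avoid Koszul homology altogether, the same conclusion follows by induction on the length of $\mfa$ from the single-element snake-lemma computation, at the expense of tracking the behavior of the colon ideal upon passing to $R/a_1 R$.
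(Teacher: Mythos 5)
Your proof is correct, and it takes a genuinely different route from the paper's. The paper reduces at once to the case where $\mfa = a$ is a single element and then invokes the string of inclusions $\ass(J:g) \subseteq \ass(J) \subseteq \ass(J:g) \cup \ass(J,g)$ — which are exactly the associated-prime inclusions induced by your short exact sequence $0 \to R/(J:g) \xra{\cdot g} R/J \to R/(J,g) \to 0$ — so that regularity becomes prime avoidance; your reading of ``regular on $(J:g)$'' as regularity on the cyclic module $R/(J:g)$ matches the paper's, since $\ass(I)$ there denotes $\ass(R/I)$. Your Koszul-homology argument buys two things. First, it handles the whole sequence $\mfa$ at once: the paper's ``without loss of generality assume $\mfa = a$ has length $1$'' conceals an induction in which one must check that the hypotheses descend to $R/(a_1)$ and that the colon ideal commutes with this quotient, i.e.\ $(J+a_1):g = (J:g)+(a_1)$, which in turn requires a Tor-vanishing (equivalently, the vanishing kernel term in your snake lemma) — precisely the bookkeeping you flag at the end. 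Second, it isolates where finiteness enters: the implication ``regular sequence $\Rightarrow$ positive Koszul homology vanishes'' is completely general, while the converse needs the graded (or local) Noetherian, finitely generated setting; the paper's proof needs comparable hypotheses anyway, since ``zero divisors equal the union of associated primes'' already requires Noetherianity, even though the lemma is stated over an arbitrary commutative ring. What the paper's route buys in exchange is brevity: for a single element, the two inclusions deliver both directions of the equivalence in one line.
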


\begin{proof}
Without loss of generality assume that $\mfa = a$ has length $1$. The proof follows immediately from the string of inclusions:
$$\ass (J : g) \subseteq \ass (J) \subseteq \ass(J:g ) \cup \ass (J,g).$$


\end{proof}

If we assume furthermore that the ideal $I$ is a monomial ideal with a convenient splitting, then Lemma \ref{lem:PresReg} can be upgraded considerably.

\begin{cor}\label{cor:equiGenCor}
Let $R$ be a polynomial ring over a field. Let $I$ be a linearly presented, equigenerated monomial ideal of degree $d$. Assume that $I = J +K$, where $J$ and $K$ are monomial ideals with $\beta_{1,d+1} (K) = 0$. Let $\mfa$ be regular on $R/I$. 

Then $\mfa$ is regular on $R/J$ if and only if $\mfa$ is regular on $R / (J :g)$ for all $g \in G(K)$.
\end{cor}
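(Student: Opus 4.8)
The plan is to realise the module $I/J$ as a direct sum of the colon quotients $R/(J:g)$ and then feed the resulting short exact sequence into the two-out-of-three mechanism for regular sequences that already underlies Lemma \ref{lem:PresReg}. Write $G(K) = \{g_1, \dots, g_s\}$; since $K \subseteq I$ and $I$ is equigenerated in degree $d$, I may assume the generators of $K$ lie among $G(I)$, so each $g_i$ has degree $d$. Everything rests on the following membership statement, which is where the hypotheses on $I$ and $K$ enter and which I expect to be the main obstacle: \emph{for $i \ne j$ one has $\lcm(g_i, g_j) \in J$.}

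To prove this, set $\mu = \lcm(g_i, g_j)$. Because $\beta_{1,d+1}(K) = 0$, no two generators of $K$ admit a linear syzygy, so $\deg \mu \ge d+2$. I would then look at the Koszul syzygy $\tau = \frac{\mu}{g_i} e_{g_i} - \frac{\mu}{g_j} e_{g_j}$ of $I$ in multidegree $\mu$. Since $I$ is linearly presented, its first syzygy module is generated in degree $d+1$, that is, by the linear syzygies $\sigma_{p,p'} = \frac{\lcm(p,p')}{p} e_p - \frac{\lcm(p,p')}{p'} e_{p'}$ attached to pairs of generators with $\deg\lcm(p,p') = d+1$. Writing $\tau = \sum_\rho c_\rho \sigma_\rho$ as a homogeneous combination forces $\lcm(\sigma_\rho) \mid \mu$ for every $\rho$ with $c_\rho \ne 0$; inspecting the nonzero coefficient of $e_{g_i}$ then shows that some linear syzygy $\sigma_{g_i, h}$ occurs, with $\lcm(g_i, h) \mid \mu$ and $\deg \lcm(g_i,h) = d+1$. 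Thus $g_i$ and $h$ have a linear syzygy, so $h \notin G(K)$ (again by $\beta_{1,d+1}(K)=0$), whence $h \in G(J)$; as $h \mid \lcm(g_i,h) \mid \mu$, this gives $\mu \in (h) \subseteq J$, as claimed. I expect the only delicate point here to be the clean extraction of a \emph{linear} neighbour $h$ of $g_i$ from the degree bookkeeping of $\tau$.

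Granting the claim, I would show $I/J \cong \bigoplus_{i=1}^s R/(J:g_i)$ via $e_i \mapsto \bar g_i$. Well-definedness and surjectivity are immediate; for injectivity I work modulo $J$ with standard monomials, noting that any standard monomial occurring in both $r_i \bar g_i$ and $r_j \bar g_j$ with $i \ne j$ would be a common multiple of $g_i$ and $g_j$, hence a multiple of $\lcm(g_i,g_j) \in J$, contradicting standardness; so the supports are disjoint and $\sum_i r_i \bar g_i = 0$ forces each $r_i g_i \in J$, i.e. $r_i \in (J:g_i)$. This produces the short exact sequence
$$0 \to \bigoplus_{i=1}^s R/(J:g_i) \to R/J \to R/I \to 0.$$
Finally, using that $\mfa$ is regular on $R/I$, I apply the long exact sequence in Koszul homology (equivalently, the associated-prime bookkeeping behind Lemma \ref{lem:PresReg}), together with the fact that $\mfa$ is regular on a finite direct sum if and only if it is regular on each summand. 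If $\mfa$ is regular on $R/J$, then regularity on the middle term and on the quotient $R/I$ gives regularity on the submodule $\bigoplus_i R/(J:g_i)$, hence on each $R/(J:g_i)$; conversely, regularity on every $R/(J:g_i)$ gives regularity on the submodule, and combined with regularity on $R/I$ yields regularity on $R/J$. Both implications invoke only the permissible directions of the two-out-of-three principle, so the stated equivalence follows.
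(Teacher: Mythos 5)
Your proof is correct, and it takes a genuinely different route to the same destination. The combinatorial pivot is identical: your claim that $\lcm(g_i,g_j)\in J$ for distinct $g_i,g_j\in G(K)$ is exactly equivalent to the colon identity $(J+(g_1,\dots,g_i)):g_{i+1}=(J:g_{i+1})$ on which the paper's proof rests, since for monomial ideals the colon distributes over sums and $(g_j):g_{i+1}$ is generated by $\lcm(g_j,g_{i+1})/g_{i+1}$. But you do two things differently. First, you actually \emph{prove} this claim --- decomposing the Koszul syzygy of $g_i,g_j$ into linear syzygies (using linear presentation of $I$), extracting a linear neighbour $h$ of $g_i$, and ruling $h$ out of $G(K)$ via $\beta_{1,d+1}(K)=0$ --- whereas the paper merely asserts that ``one finds'' the identity; your syzygy argument is sound and fills in that step. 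Second, your assembly is one-shot rather than iterative: the paper filters $J\subset (J,g_1)\subset\cdots\subset I$ and applies Lemma \ref{lem:PresReg} once per generator of $K$, while you establish the decomposition $I/J\cong\bigoplus_i R/(J:g_i)$ (your disjoint-support argument modulo $J$ is correct, and is precisely where the lcm claim enters) and then run the two legitimate directions of the two-out-of-three principle on the single exact sequence $0\to\bigoplus_i R/(J:g_i)\to R/J\to R/I\to 0$. Your version buys an explicit structural description of $I/J$ and treats all generators of $K$ and both directions of the equivalence symmetrically; the paper's version is shorter given that Lemma \ref{lem:PresReg} is already in hand. Two shared caveats, which are not gaps relative to the paper: both arguments implicitly take $G(K)\subseteq G(I)$ (your justification requires the generators of $K$ to have degree $d$, which is the intended reading of the statement), and your appeal to Koszul homology detecting regularity ($H_1=0$ implies regular) needs $\mfa$ homogeneous of positive degree or contained in the Jacobson radical --- the same grading/finiteness conventions that the paper's associated-primes lemma tacitly uses to handle sequences of length greater than one.
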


\begin{proof}
Write $G(K) = \{ g_1 , \dots , g_t \}$. Since $I$ is linearly presented and $K$ has no linear relations on any of its generators (this is the assumption $\beta_{1,d+1} (K) = 0$), one finds
$$(J + (g_1 , \dots , g_{i} ) : g_{i+1} ) = (J : g_{i+1} ) \ \textrm{for all} \ i=0, \dots , t-1.$$
The result then follows by iterating Lemma \ref{lem:PresReg}.
\end{proof}

The following Proposition is a standard exercise which we include here for convenience.

\begin{prop}\label{prop:stdExercise}
    Let $M$ be a finitely generated $R$-module. Assume $\mfa$ is an $M$-regular sequence and let $F_\bullet$ be any free resolution of $M$ over $R$. Then $F_\bullet \otimes R/\mfa$ is a free resolution of $M/\mfa M$ over $R/\mfa$.
\end{prop}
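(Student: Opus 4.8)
The plan is to reduce the statement to a vanishing of higher $\operatorname{Tor}$ and then establish that vanishing by induction on the length of the regular sequence $\mfa$. Each $F_i \otimes_R R/\mfa$ is automatically a free $R/\mfa$-module, so it suffices to show that the homology of $F_\bullet \otimes_R R/\mfa$ is concentrated in degree $0$, where it equals $M/\mfa M$. Right-exactness of $-\otimes_R R/\mfa$ gives $H_0(F_\bullet \otimes_R R/\mfa) = M \otimes_R R/\mfa = M/\mfa M$ at once, so the entire content is the claim that $H_i(F_\bullet \otimes_R R/\mfa) = \operatorname{Tor}_i^R(M, R/\mfa) = 0$ for all $i > 0$.

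First I would settle the length-one case, where $\mfa = (a)$ with $a$ a nonzerodivisor on $M$. Using symmetry of $\operatorname{Tor}$, I compute $H_i(F_\bullet \otimes_R R/a) = \operatorname{Tor}_i^R(M, R/a)$ via the two-term free resolution $0 \to R \xrightarrow{a} R \to R/a \to 0$ of the second factor. The resulting complex is $0 \to M \xrightarrow{a} M \to 0$, whose only nonzero homology is $\operatorname{Tor}_0 = M/aM$ and $\operatorname{Tor}_1 = \ker(a \colon M \to M) = 0$ (the latter because $a$ is $M$-regular), with $\operatorname{Tor}_i = 0$ for $i \geq 2$ for degree reasons. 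Thus $F_\bullet \otimes_R R/a$ is a free resolution of $M/aM$ over $R/a$; note that this argument is valid over an arbitrary commutative ring.

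For the inductive step, write $\mfa = a_1 , \dots , a_c$ and $\mfa' = a_1 , \dots , a_{c-1}$. Since $\mfa'$ is an $M$-regular sequence of length $c-1$, the inductive hypothesis shows that $F_\bullet \otimes_R R/\mfa'$ is a free resolution of $M/\mfa' M$ over $R/\mfa'$. Because $\mfa$ is a regular sequence, the image $\bar{a}_c$ of $a_c$ is a nonzerodivisor on $M/\mfa' M$, so the length-one case, applied over the ring $R/\mfa'$ to the module $M/\mfa' M$ and the element $\bar{a}_c$, shows that $(F_\bullet \otimes_R R/\mfa') \otimes_{R/\mfa'} (R/\mfa')/(\bar{a}_c)$ is a free resolution of $(M/\mfa' M)/\bar{a}_c (M/\mfa' M) = M/\mfa M$ over $(R/\mfa')/(\bar{a}_c)$.

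The only step requiring genuine care, and the sole place the argument could break down if treated sloppily, is the change-of-rings identification $(F_\bullet \otimes_R R/\mfa') \otimes_{R/\mfa'} R/\mfa \cong F_\bullet \otimes_R R/\mfa$, which follows from associativity of tensor products together with $R/\mfa' \otimes_{R/\mfa'} R/\mfa = R/\mfa$ and the equality $(R/\mfa')/(\bar{a}_c) = R/\mfa$. With this identification in hand, the previous paragraph exhibits $F_\bullet \otimes_R R/\mfa$ as a free resolution of $M/\mfa M$ over $R/\mfa$, completing the induction and hence the proof.
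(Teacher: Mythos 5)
Your overall strategy is parallel to the paper's: reduce to the length-one case by induction on the length of $\mfa$, then do a homological computation for a single element. The two write-ups differ only in how that length-one case is handled. The paper uses the short exact sequence of complexes $0 \to F_\bullet \xrightarrow{a} F_\bullet \to F_\bullet/aF_\bullet \to 0$ and the long exact sequence in homology, whereas you invoke balance (symmetry) of $\operatorname{Tor}$ and compute with the two-term complex $0 \to R \xrightarrow{a} R \to 0$ resolving the second factor; these are equivalent in substance. Your inductive step, with the explicit change-of-rings bookkeeping over $R/\mfa'$, is in fact more carefully written than the paper's one-line reduction.

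There is, however, a genuine gap, and it sits exactly where you claim the argument is most robust. The complex $0 \to R \xrightarrow{a} R \to 0$ is a free resolution of $R/a$ only when $a$ is a nonzerodivisor on $R$, but the hypotheses only make $a$ a nonzerodivisor on $M$. Your parenthetical claim that the argument "is valid over an arbitrary commutative ring" is false; indeed the proposition itself fails in that generality. Take $R = k[x,y]/(xy)$, $M = R/(y) \cong k[x]$, and $a = x$: then $a$ is $M$-regular and $M/aM \neq 0$, but the minimal free resolution of $M$ is $\cdots \xrightarrow{y} R \xrightarrow{x} R \xrightarrow{y} R \to M \to 0$, and tensoring with $R/(x) \cong k[y]$ yields $\cdots \xrightarrow{y} k[y] \xrightarrow{0} k[y] \xrightarrow{y} k[y] \to 0$, whose second homology is $k[y]/(y) \neq 0$; equivalently $\operatorname{Tor}_2^R(M, R/a) \neq 0$, so $F_\bullet \otimes_R R/a$ is not a resolution. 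The same issue recurs in your inductive step, where you need $\bar{a}_c$ to be a nonzerodivisor on the \emph{ring} $R/\mfa'$, not merely on $M/\mfa' M$. To be fair, the paper's own proof carries the identical implicit assumption (injectivity of $a \colon F_i \to F_i$ on free modules forces $a$ to be $R$-regular), and in every application in the paper the sequence consists of variable differences in a polynomial ring, which do form an $R$-regular sequence, so the defect lies in the stated generality rather than in the way the result is used. But a correct version of your proof must either add the hypothesis that $\mfa$ is also an $R$-regular sequence (automatic when $R$ is a domain and the $a_i$ are suitably independent) or refrain from claiming validity over arbitrary commutative rings.
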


\begin{proof}
By induction it is of no loss of generality to assume $\mfa = a$ has length $1$. There is a short exact sequence of complexes:
$$0 \to F_\bullet \xrightarrow{a} F_\bullet \to F_\bullet / aF_\bullet \to 0.$$
The long exact sequence of homology implies $H_i (F_\bullet / a F_\bullet) = 0$ for $i \geq 2$, and $H_1 (F_\bullet / a F_\bullet) = 0$ by the assumption that $a$ is regular on $M$. 
\end{proof}

\begin{setup}\label{set:rainDFIsetup}
Let $R = k[x_{ij} \mid 1 \leq i \leq n, \ 1 \leq j \leq m]$ be a polynomial ring over an arbitrary field $k$ and $M$ be an $n \times m$ matrix of variables in $R$ where $n\leq m$. Let $\Delta$ be an $(n-1)$-dimensional pure simplicial complex and assume that $<$ is an arbitrary term order on $R$. Assume that $\Delta^\vee$ satisfies the following:
\begin{enumerate}[(*)]
    \item for all $\sigma, \ \tau \in \Delta^\vee$, $|\sigma \cap \tau | < n-1$. 
\end{enumerate}
\end{setup}

\begin{cor}\label{cor:varDiffsRegular}
Adopt notation and hypotheses as in Setup \ref{set:rainDFIsetup}. Then the sequence of variable differences
    $$\{ x_{11} - x_{21} , \dots, x_{11} - x_{n1} \} \cup \cdots \cup \{ x_{1m} - x_{2m} , \dotsc , x_{1m} - x_{nm} \}$$
    forms a regular sequence on $R / \rain_< (J_\Delta)$ if and only if $\grade (\rain_< (J_\Delta) : g) = m-n$ for all $g \in \rain_< (J_{\Delta^\vee})$.
\end{cor}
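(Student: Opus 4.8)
The plan is to deduce the statement from Corollary~\ref{cor:equiGenCor} and then analyze the resulting colon ideals through the specialization by $\mfa$. Write $I=\inn_< I_n(M)$, $J=\rain_<(J_\Delta)$, and $K=\rain_<(J_{\Delta^\vee})$, so that $I=J+K$ by the Observation that $\rain_<(J_\Delta)+\rain_<(J_{\Delta^\vee})=\inn_< I_n(M)$, and $G(I)=G(J)\sqcup G(K)$ because $\Delta$ and $\Delta^\vee$ partition the $n$-subsets of $[m]$. First I would check the hypotheses of Corollary~\ref{cor:equiGenCor} with $d=n$: the ideal $I$ is equigenerated in degree $n$ and linearly presented (the Eagon--Northcott complex is linear in homological degrees $\geq 2$, and Betti numbers are preserved by Theorem~\ref{thm:boocherBnos}), and $\mfa$ is regular on $R/I$ by Theorem~\ref{thm:boocherBnos}. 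The essential point is that $\beta_{1,n+1}(K)=0$, and this is exactly where hypothesis $(\ast)$ of Setup~\ref{set:rainDFIsetup} enters: a linear syzygy between two distinct generators $\inn_<([\sigma])$ and $\inn_<([\tau])$ of $K$ would require the two degree-$n$ squarefree monomials to share $n-1$ variables, hence $|\sigma\cap\tau|\geq n-1$ for the corresponding facets $\sigma,\tau\in\Delta^\vee$, contradicting $(\ast)$. Corollary~\ref{cor:equiGenCor} then reduces the claim to proving, for each generator $g=\inn_<([\tau])$ of $K$, the equivalence
\[
\mfa \ \text{regular on}\ R/(J:g)\iff \grade(J:g)=m-n .
\]

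For this equivalence I would pass to $S:=R/(\mfa)\cong k[y_1,\dots,y_m]$, where the variable differences identify the $n$ row-variables in each column. Since $\mfa$ is an $R$-regular sequence of $m(n-1)$ linear forms, the standard graded criterion says $\mfa$ is regular on $R/(J:g)$ if and only if $\grade((\mfa);R/(J:g))=m(n-1)$, and under $R\to S$ the quotient $R/((J:g)+(\mfa))$ becomes $S/\overline{(J:g)}$ for a squarefree monomial ideal $\overline{(J:g)}$ generated by the $y$-images of the generators of $J:g$. The key lemma is that $(\ast)$ forces $\Ht_S\overline{(J:g)}=m-n$: no generator of $\overline{(J:g)}$ is supported inside $\tau$ (each arises from a facet $\sigma\neq\tau$ of $\Delta$, hence uses a column outside $\tau$), so $\tau$ is a face and $\Ht_S\overline{(J:g)}\leq m-n$; conversely an independent set of size $n+1$ would consist of $n$-subsets all lying in $\Delta^\vee$ and pairwise meeting in $n-1$ vertices, again contradicting $(\ast)$. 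Since modding out $m(n-1)$ elements drops dimension by at most that many, one always has $\Ht_R(J:g)\geq \Ht_S\overline{(J:g)}=m-n$. The forward implication is then immediate: if $\mfa$ is regular, the inequality $\grade((\mfa);R/(J:g))\leq \dim R/(J:g)-\dim S/\overline{(J:g)}$ forces $\Ht_R(J:g)\leq m-n$, so $\grade(J:g)=\Ht_R(J:g)=m-n$.

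The converse is the step I expect to be the main obstacle. Assuming $\grade(J:g)=m-n=\Ht_S\overline{(J:g)}$, the two heights coincide and $\mfa$ drops $\dim R/(J:g)$ by its full length $m(n-1)$; I must upgrade this maximal dimension drop to genuine regularity, i.e.\ to the vanishing $\tor^R_{>0}(S,R/(J:g))=0$. I plan to obtain this by showing $R/(J:g)$ is Cohen--Macaulay, so that $\grade((\mfa);R/(J:g))$ equals the codimension difference $\dim R/(J:g)-\dim S/\overline{(J:g)}=m(n-1)$; here I would exploit the monomial structure of $J:g$ together with the description of its linear strand via the variable ideal $(x_{v'}\mid v'\in N_v)$ from the Observation preceding Setup~\ref{set:CellSetup}. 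Finally, since Corollary~\ref{cor:equiGenCor} only tests the minimal generators $g\in G(K)$, which are precisely the monomials $\inn_<([\tau])$ with $\tau\in\Delta^\vee$, the grade condition need only be imposed on these elements, matching the quantifier in the statement.
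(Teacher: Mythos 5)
Your reduction via Corollary \ref{cor:equiGenCor} is sound and matches the paper's strategy: you verify the hypotheses (in particular that condition $(\ast)$ of Setup \ref{set:rainDFIsetup} forces $\beta_{1,n+1}(\rain_<(J_{\Delta^\vee}))=0$, which the paper leaves implicit), and your forward implication --- regularity of $\mfa$ forces $\grade(\rain_<(J_\Delta):g)=\Ht_S\,\overline{(\rain_<(J_\Delta):g)}=m-n$ --- is a correct height-comparison argument, essentially the paper's "the grade after specializing must be $m-n$." The genuine gap is the converse, which you yourself flag as "the main obstacle" and then do not close: from $\grade(\rain_<(J_\Delta):g)=m-n$ you must deduce that $\mfa$ is regular on $R/(\rain_<(J_\Delta):g)$, and a maximal dimension drop does \emph{not} imply regularity unless the module is Cohen--Macaulay. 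Your proposal to prove Cohen--Macaulayness of $R/(\rain_<(J_\Delta):g)$ is stated as a plan only; nothing in the write-up identifies the structure of the colon ideal that would make such a proof go through, so the "if" direction of the Corollary remains unproved.

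The missing structural fact --- and the content of the paper's citation of Theorem \ref{thm:boocherBnos} for the bound $\grade(\rain_<(J_\Delta):g)\geq m-n$ --- is that under $(\ast)$ the colon $(J:g)$, $J=\rain_<(J_\Delta)$, $g=\inn_<([\tau])$, contains an honest \emph{variable} in every column outside $\tau$. Indeed, any facet $\sigma$ with $|\sigma\cap\tau|=n-1$ lies in $\Delta$ by $(\ast)$; and since by Boocher's theorem the minimal free resolution of $\inn_< I_n(M)$ specializes to that of the ideal of all squarefree degree-$n$ monomials, the linear syzygies in each column-multidegree $\tau\cup\{j\}$ form a spanning tree on the $n+1$ generators involved, so $g$ admits a linear syzygy with some $h_\sigma\in G(J)$, $\sigma\subset\tau\cup\{j\}$, whose colon element is a single variable $x_{i_j j}$. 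This produces a monomial prime $P=(x_{i_j j}\mid j\notin\tau)$ of height $m-n$ inside $(J:g)$, which already gives the unconditional bound $\grade(J:g)\geq m-n$. Now if $\grade(J:g)=m-n$, then $(J:g)=P$: any monomial $u\in (J:g)\setminus P$ has support disjoint from the generators of $P$, so every prime containing $P+(u)\subseteq (J:g)$ has height at least $m-n+1$, a contradiction. Hence $R/(J:g)$ is a polynomial ring, in particular Cohen--Macaulay, and only then does your dimension count upgrade to the required vanishing of higher Koszul homology, i.e.\ to regularity of $\mfa$. Without establishing the containment $P\subseteq (J:g)$ and the resulting identification $(J:g)=P$, the converse direction of your argument does not stand.
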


\begin{proof}
Observe that $\grade (\rain_< (J_\Delta) : g) \geq m-n$ for all $g \in \rain_< (J_{\Delta^\vee})$ by Theorem \ref{thm:boocherBnos}. The grade after specializing must be $m-n$, so the variable differences are regular if and only if $\grade (\rain_< (J_\Delta) : g) = m-n$ for all $g \in \rain_< (J_{\Delta^\vee})$. The result then follows by Corollary \ref{cor:equiGenCor}.
\end{proof}

The next definition introduces the notion of a free sequence of vertices in a CW complex. This definition will end up being the ``correct" condition for characterizing rainbow DFIs with linear resolution.

\begin{definition}\label{def:freeSeq}
    Let $\cP$ be a CW complex. A vertex is \emph{free} if it is contained in a unique facet of $\cP$. A \emph{free sequence} is a sequence of vertices $\{ v_1 , \dots , v_k \} \subseteq \cP$ such that $v_i$ is a free vertex of $\cP \backslash \{ v_1 , \dots , v_{i-1} \}$ for each $1 = 1, \dots , k$. 
\end{definition}

\begin{lemma}[{\cite{NR09}}]\label{lem:NRlemma}
Let $C$ be a polytopal complex and $v$ a vertex in $C$ that lies in a unique facet $P$, with $\dim P > 0$. Then the subcomplex induced by deleting the vertex $v$ is homotopy equivalent to $C$. 
\end{lemma}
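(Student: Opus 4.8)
The plan is to exhibit a strong deformation retraction of the geometric realization $|C|$ onto the realization of the deletion, from which homotopy equivalence is immediate. First I would record the structural consequence of the uniqueness hypothesis: since $P$ is the \emph{only} facet containing $v$, any face $F$ of $C$ with $v \in F$ is contained in some facet, and that facet must be $P$; hence $F$ is a face of $P$. Consequently the faces removed upon deleting $v$ are precisely the faces of $P$ containing $v$ (the star of $v$ in $P$). Writing $C'$ for the induced subcomplex on $V(C) \setminus \{v\}$ and $D$ for the \emph{antistar} — the subcomplex of faces of $P$ not containing $v$ — a short carrier argument shows that the geometric realizations satisfy $|C| = |C'| \cup |P|$ with $|C'| \cap |P| = |D|$.

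The heart of the argument is to show that $|D|$ is a strong deformation retract of $|P|$. Here I would combine three observations: $|P|$ is convex, hence contractible; the inclusion $|D| \hookrightarrow |P|$ is the inclusion of a subcomplex, hence a cofibration; and $|D|$ is itself contractible. For the last point, radial projection from $v$ carries $|D|$ homeomorphically onto the vertex figure $P \cap S$, where $S$ is a sufficiently small sphere centered at $v$; since $\dim P > 0$, this vertex figure is a ball and hence contractible. A cofibration between contractible spaces is the inclusion of a strong deformation retract, so there is a homotopy $H \colon |P| \times [0,1] \to |P|$ with $H_0 = \mathrm{id}$, $H_1(|P|) \subseteq |D|$, and $H_t$ fixing $|D|$ pointwise for every $t$.

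Finally I would glue. Because each $H_t$ restricts to the identity on $|D| = |C'| \cap |P|$, the maps $H_t$ on $|P|$ and the identity on $|C'|$ agree on the overlap and patch, by the pasting lemma, to a continuous homotopy $\widehat{H} \colon |C| \times [0,1] \to |C|$. By construction $\widehat{H}$ is a strong deformation retraction of $|C|$ onto $|C'|$, so $C$ and its deletion are homotopy equivalent, as claimed.

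I expect the main obstacle to be exactly the contractibility of the antistar $|D|$. The realization $|P|$ is a cone over $|D|$ with apex $v$, and a cone deformation retracts onto its base \emph{only} when the base is contractible (for instance $\mathrm{Cone}(S^1) = D^2$ does not retract onto $S^1$), so this point cannot be skipped. Identifying $|D|$ with the vertex figure via radial projection is the cleanest route to contractibility, and it is precisely the step that fails when $\dim P = 0$, which is why that degenerate case must be excluded in the hypothesis.
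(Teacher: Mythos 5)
Your proof is correct, and there is in fact nothing in the paper to compare it against: the paper states this lemma purely as a citation to Nagel and Reiner \cite{NR09} and supplies no proof of its own, so your write-up is a reconstruction of the cited result, and it follows the standard route behind it. The decomposition step is sound: uniqueness of the facet $P$ forces every face containing $v$ to be a face of $P$, whence $|C| = |C'| \cup |P|$; the intersection axiom for polytopal complexes gives the nontrivial inclusion $|C'| \cap |P| \subseteq |D|$, since for $F \in C'$ the set $F \cap P$ is a face of $P$ avoiding $v$. You also correctly isolate the crux: $|P|$ is the geometric cone over $|D|$ with apex $v$, and a cone deformation retracts onto its base only when the base is contractible, so contractibility of the antistar cannot be waved away. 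Your radial-projection argument for it is sound -- each ray from $v$ meeting $P \setminus \{v\}$ exits $P$ through a unique point of $|D|$ (existence because the minimal face of the exit point cannot contain $v$, uniqueness by a supporting-hyperplane argument), so the projection is a continuous bijection from the compact set $|D|$ onto the vertex figure, hence a homeomorphism; the vertex figure is a ball precisely because $v$ is a vertex, so its tangent cone is pointed, and this is also exactly where $\dim P > 0$ enters. The remaining ingredients you invoke -- that $(|P|,|D|)$ is a CW pair, so the inclusion is a cofibration, and that a cofibration which is a homotopy equivalence is the inclusion of a strong deformation retract -- are standard (see, e.g., \cite{hatcher2005algebraic}), and the final pasting step is immediate since $|C'|$ and $|P|$ are closed in $|C|$ and each stage of the homotopy fixes the overlap $|D|$ pointwise. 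So the proposal is a complete and correct proof of the lemma, indeed of the stronger statement that the deletion is a strong deformation retract of $C$.
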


Given an $(n-1)$-dimensional pure simplicial complex $\Delta$, the facets of $\Delta$ have a one-to-one correspondence with vertices of $\po (E_\bullet^<)$. We will tacitly employ this association for the remainder of the paper. 

\begin{cor}\label{cor:freeSeqisLinear}
Adopt notation and hypotheses as in Corollary \ref{cor:initisCellular}. Let $\Delta$ be an $(n-1)$-dimensional pure simplicial complex. If the facets of $\Delta^\vee$ admit an ordering for which they are a free sequence in $\po (E_\bullet^<)$, then $\rain_< (J_\Delta)$ has linear minimal free resolution supported on a CW complex.
\end{cor}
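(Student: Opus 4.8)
The plan is to prove the two assertions — that $\rain_<(J_\Delta)$ has a \emph{linear} minimal free resolution and that this resolution is \emph{cellular} — essentially in one stroke, by exhibiting the induced subcomplex of $\cP := \po(E_\bullet^<)$ on the vertices indexed by facets of $\Delta$ as the entire minimal free resolution. Since the set of maximal minors is a universal Gr\"obner basis, the vertices of $\cP$ are partitioned into the facets of $\Delta$ and the facets of $\Delta^\vee$, and $G(\rain_<(J_\Delta))$ is obtained from $G(\inn_< I_n(M))$ by deleting the generators indexed by $\Delta^\vee$. By Corollary \ref{cor:Bnos0or1} the multidegrees of $E_\bullet^<$ are unique, so every $1$-dimensional face of $\cP$ carries a unique multidegree and Theorem \ref{thm:disBnosCell} applies: the linear strand of $\rain_<(J_\Delta)$ is precisely the subcomplex $\cP'$ induced on the $\Delta$-vertices. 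Thus $\rain_<(J_\Delta)$ has a linear resolution if and only if $\cP'$ supports a resolution of $R/\rain_<(J_\Delta)$, i.e. if and only if $\cP'$ is acyclic, and in that case the CW conclusion is automatic from Corollary \ref{cor:rainbowisCellular}.

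First I would organize the deletion as an induction along the free sequence. Let $v_1, \dots, v_k$ be the ordering of the facets of $\Delta^\vee$ furnished by the hypothesis, put $\cP_0 := \cP$ and $\cP_i := \cP_{i-1}\setminus v_i$, so that $\cP_k = \cP'$; by Theorem \ref{thm:disBnosCell} each $\cP_i$ supports the linear strand of the ideal obtained after deleting $m_{v_1}, \dots, m_{v_i}$. The base complex $\cP_0$ is acyclic by Corollary \ref{cor:initisCellular}. The inductive step is to show that removing the free vertex $v_i$ from the acyclic complex $\cP_{i-1}$ leaves an acyclic complex $\cP_i$; when the unique facet $P_i$ of $v_i$ is $0$-dimensional, $v_i$ is an isolated vertex and the deletion trivially removes a free summand, so we may assume $\dim P_i > 0$ and invoke Lemma \ref{lem:NRlemma}.

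The main obstacle is that Lemma \ref{lem:NRlemma} only produces a homotopy equivalence of the underlying complexes $\cP_i \simeq \cP_{i-1}$, whereas the resolution property must be verified in every multidegree $\mathbf m$ by checking that the restricted subcomplex $(\cP_i)_{\leq \mathbf m}$ is acyclic (the standard Bayer--Sturmfels criterion). A Mayer--Vietoris computation on $(\cP_{i-1})_{\leq\mathbf m} = \big((\cP_{i-1})_{\leq\mathbf m}\setminus v_i\big)\cup \overline{\mathrm{st}}(v_i)$ shows that the deletion preserves acyclicity of the graded piece exactly when the link of $v_i$ inside $(\cP_{i-1})_{\leq\mathbf m}$ is acyclic — and a vertex that is free in $\cP_{i-1}$ need not remain free after restricting to multidegrees dividing $\mathbf m$. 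I would dispatch this by identifying the relevant link: since $v_i$ lies in a unique facet $P_i$, Lemmas \ref{lem:injCorr} and \ref{lem:maxlSupp} show that every face containing $v_i$ is a subface of $P_i$ and that $v_i$ has exactly $\dim P_i$ neighbors, all of which are vertices of $P_i$; consequently the link of $v_i$ in $(\cP_{i-1})_{\leq\mathbf m}$ is the link of $v_i$ in the subcomplex of $P_i$ induced on those vertices whose multidegree divides $\mathbf m$. Using the explicit form of the admissible multidegrees from Corollary \ref{cor:theBasisElts}, I expect this restricted link to be a ball, hence contractible, for every $\mathbf m$, so that acyclicity is preserved at each multidegree and $\cP' = \cP_k$ is acyclic. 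This completes the argument: $\cP'$ is simultaneously the linear strand and the full minimal free resolution of $\rain_<(J_\Delta)$, yielding linearity, while its CW structure is inherited from $\cP$.
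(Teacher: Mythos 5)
Your architecture is the same as the paper's: the paper's entire proof consists of citing Corollary \ref{cor:rainbowisCellular} for CW-ness and then deleting the facets of $\Delta^\vee$ one at a time along the free sequence, invoking Lemma \ref{lem:NRlemma} to say that ``removing a free vertex at each step leaves the homology unchanged.'' Your first two paragraphs reproduce exactly this. Where you go beyond the paper is your third paragraph, and your diagnosis there is correct and valuable: Lemma \ref{lem:NRlemma} is a statement about the homotopy type of the total complex, whereas ``$\cP'$ supports a resolution'' means, via the Bayer--Sturmfels criterion, that $(\cP_i)_{\leq \mathbf{m}}$ is acyclic for \emph{every} multidegree $\mathbf{m}$, and a vertex that is free in $\cP_{i-1}$ need not a priori remain free in the restriction $(\cP_{i-1})_{\leq\mathbf{m}}$. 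This is precisely the verification that the paper's two-line proof leaves implicit.

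However, your proposal stops at the decisive point: ``I expect this restricted link to be a ball'' is an expectation, not a proof, and without it your induction does not close. The gap can be filled with the paper's own tools. If $m_{v_i} \nmid \mathbf{m}$, then no face containing $v_i$ survives the restriction, so $(\cP_i)_{\leq\mathbf{m}} = (\cP_{i-1})_{\leq\mathbf{m}}$ and there is nothing to prove. If $m_{v_i} \mid \mathbf{m}$, then by freeness every face of $\cP_{i-1}$ containing $v_i$ is a subface of $P_i$, hence by Corollary \ref{cor:theBasisElts} corresponds to a divisor of $\mdeg(e_{P_i})$ divisible by $m_{v_i}$; conversely, any such divisor is again a valid multidegree (its row blocks are nonempty because they contain the entries of $m_{v_i}$, and its transversals are among those of $\mdeg(e_{P_i})$, hence generators). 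It follows that the faces of $(\cP_{i-1})_{\leq\mathbf{m}}$ containing $v_i$ are closed under taking $\lcm$s, so they have a unique maximal element $F$; the closed star of $v_i$ in the restriction is therefore the full face complex of the single cell $F$, and your intersection $A \cap B$ is the complex of faces of $F$ not containing $v_i$ --- a shellable ball, by the same fact about polytopes that underlies Lemma \ref{lem:NRlemma}. With this, your Mayer--Vietoris step goes through in every multidegree, the intermediate complexes $\cP_i$ all support resolutions, and the argument is complete; as a bonus, this is exactly the justification one would want to graft onto the paper's own terse proof.
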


\begin{proof}
The fact that the minimal free resolution is CW is Corollary \ref{cor:rainbowisCellular}. The linearity follows from Lemma \ref{lem:NRlemma}; since $\inn_< I_n (M)$ has linear resolution, removing a free vertex at each step leaves the homology unchanged. This implies that $\rain_< (J_\Delta)$ has linear resolution.
\end{proof}

\begin{theorem}\label{thm:linearResCriterion}
Adopt notation and hypotheses as in Setup \ref{set:rainDFIsetup}. Then $\rain_< (J_\Delta)$ has a linear minimal free resolution if and only if $\grade (\rain_< (J_\Delta) : g) = m-n$ for all $g \in G(\rain_< (J_{\Delta^\vee}) )$ (that is, each element of $\Delta^\vee$ is a free vertex of $\po (E_\bullet^<)$).
\end{theorem}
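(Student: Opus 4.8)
The plan is to reduce the stated equivalence, by way of Corollary \ref{cor:varDiffsRegular}, to the single assertion that $\rain_<(J_\Delta)$ has a linear minimal free resolution if and only if the sequence $\mfa$ of variable differences is regular on $R/\rain_<(J_\Delta)$. Write $\ol R := R/\mfa \cong k[y_1,\dots,y_m]$, where $y_j$ is the common image of $x_{1j},\dots,x_{nj}$. Specialization sends the lead term of each minor $[\sigma]$ to the squarefree monomial $\prod_{j\in\sigma}y_j$, so $\rain_<(J_\Delta)$ reduces modulo $\mfa$ to the squarefree ideal $K'$ generated by $\{\prod_{j\in\sigma}y_j \mid \sigma \text{ a facet of }\Delta\}$, whose complementary ideal is generated by $\{\prod_{j\in\tau}y_j \mid \tau\in\Delta^\vee\}$. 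Since hypothesis $(*)$ of Setup \ref{set:rainDFIsetup} is exactly the overlap bound $|\sigma\cap\tau|<n-1$ appearing in Lemma \ref{lem:linearResLem}, the quotient $\ol R/K'$ always has a linear minimal free resolution; this is the fixed combinatorial object both directions will be compared against.

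For the implication ($\Leftarrow$), I would argue that the grade hypothesis yields, via Corollary \ref{cor:varDiffsRegular}, that $\mfa$ is a regular sequence on $R/\rain_<(J_\Delta)$. Proposition \ref{prop:stdExercise} then shows that tensoring the minimal free resolution $F_\bullet$ of $R/\rain_<(J_\Delta)$ with $\ol R$ produces a free resolution of $\ol R/K'$; this remains minimal, because the homogeneous differentials have entries of positive degree and these stay of positive degree after reduction. Hence the graded Betti numbers of $R/\rain_<(J_\Delta)$ coincide with those of $\ol R/K'$, and the latter is linear by the first paragraph, so $\rain_<(J_\Delta)$ is linear.

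The implication ($\Rightarrow$) is where I expect the real work, and it amounts to recovering regularity from linearity, i.e.\ to proving $\Tor_i^R(R/\rain_<(J_\Delta),\ol R)=0$ for $i>0$. The plan is to fix the frame via Theorem \ref{thm:disBnosCell}: by Corollaries \ref{cor:initisCellular}, \ref{cor:rainbowisCellular}, and \ref{cor:Bnos0or1}, every $1$-cell of $\po(E_\bullet^<)$ carries a distinct multidegree, so the linear strand of $R/\rain_<(J_\Delta)$ is supported on the induced subcomplex $\cP_\Delta\subseteq\po(E_\bullet^<)$ on the vertices indexing facets of $\Delta$. When $\rain_<(J_\Delta)$ is linear, $F_\bullet$ equals this strand, so $F_\bullet\otimes\ol R$ is a minimal \emph{linear} complex over $\ol R$ with $H_0=\ol R/K'$ carried by the $y_\sigma$-homogenization of $\cP_\Delta$. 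The crux is to show this complex is acyclic: I would invoke Theorem \ref{thm:linstrandequiv}, verifying the homology-vanishing criterion $H_i(F_\bullet\otimes\ol R)_{i+n+j}=0$ for $i>0$, $j=0,1$, so that $F_\bullet\otimes\ol R$ is forced to be the linear strand of $\ol R/K'$; since $\ol R/K'$ is linear (Lemma \ref{lem:linearResLem}), that linear strand is its full minimal resolution and hence acyclic, giving the desired $\Tor$-vanishing and thus regularity of $\mfa$ via Corollary \ref{cor:varDiffsRegular}. The main obstacle is precisely this low-degree homology vanishing: one cannot appeal to a one-sided comparison of Betti numbers before and after killing $\mfa$ (that comparison has no fixed direction when $\mfa$ fails to be regular on the module), so the argument must genuinely produce the vanishing of the relevant Koszul homology of $\mfa$ on $R/\rain_<(J_\Delta)$ from the linearity of both $R/\rain_<(J_\Delta)$ and $\ol R/K'$.

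Finally, the parenthetical reformulation ``each element of $\Delta^\vee$ is a free vertex of $\po(E_\bullet^<)$'' is the combinatorial reading of $\grade(\rain_<(J_\Delta):g)=m-n$: a facet $g=g_\tau$ of $\Delta^\vee$ realizes the minimal possible grade $m-n$ on its colon ideal exactly when it extends to a unique top-dimensional multidegree, that is, lies in a single facet of $\po(E_\bullet^<)$, a fact one reads off directly from the description of valid multidegrees in Corollary \ref{cor:theBasisElts}. Combined with the observation that the generators of $\rain_<(J_{\Delta^\vee})$ are pairwise non-adjacent in $\po(E_\bullet^<)$ under $(*)$, this also reconciles the statement with the free-sequence criterion of Corollary \ref{cor:freeSeqisLinear}.
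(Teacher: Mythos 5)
Your backward direction ($\Leftarrow$) is correct and is a genuinely different route from the paper's: the paper deduces from the grade hypothesis that each facet of $\Delta^\vee$ is a free vertex of $\po(E_\bullet^<)$ and then applies Corollary \ref{cor:freeSeqisLinear} (deletion of free vertices via Lemma \ref{lem:NRlemma}), whereas you specialize by the regular sequence of variable differences (Corollary \ref{cor:varDiffsRegular} together with Proposition \ref{prop:stdExercise}) and quote Lemma \ref{lem:linearResLem} for the specialized ideal $K'$. Both are legitimate; yours trades the combinatorial deletion argument for the algebraic specialization that the paper itself exploits later in Corollary \ref{cor:Btab}.

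The forward direction ($\Rightarrow$), however, has a genuine gap, and you concede it yourself: your reduction requires showing that linearity of the resolution of $R/\rain_<(J_\Delta)$ forces $\Tor_i^R(R/\rain_<(J_\Delta),\ol{R})=0$ for all $i>0$, and the only tool you propose for this is Theorem \ref{thm:linstrandequiv}, whose hypothesis (vanishing of $H_i(F_\bullet\otimes\ol{R})$ in internal degrees $i+n$ and $i+n+1$ for $i>0$) is essentially the conclusion you are after; no mechanism is offered to produce that vanishing. There is also a secondary wrinkle: even granting the vanishing, Theorem \ref{thm:linstrandequiv} only yields that $F_\bullet\otimes\ol{R}$ is the linear strand of \emph{some} module with initial degree $n$, and identifying it with the linear strand (hence, by Lemma \ref{lem:linearResLem}, the full minimal resolution) of $\ol{R}/K'$ needs a further argument that you do not supply. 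The paper sidesteps this analytic problem entirely by arguing by contraposition through the iterated trimming complex of Theorem \ref{itres}: a free resolution of $R/\rain_<(J_\Delta)$ is the mapping cone of a morphism $Q_\bullet$ from (a truncation of) the length-$(m-n+1)$ resolution of $R/\inn_< I_n(M)$ to the direct sum of minimal free resolutions of the colon ideals $(\rain_<(J_\Delta):g)$, $g\in G(\rain_<(J_{\Delta^\vee}))$; linearity of the minimal resolution forces each comparison map $Q_i$ to be surjective, but if $\grade(\rain_<(J_\Delta):g)>m-n$ for some $g$ (and $\geq m-n$ always holds), then the corresponding colon resolution is nonzero in homological degree $m-n+1$ while $F_{m-n+2}=0$, so $Q_{m-n+1}$ cannot be surjective. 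To complete your write-up you would need either to prove the Koszul homology vanishing directly or to replace this direction with an obstruction argument of that kind.
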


\begin{proof}
$\implies:$ Argue by contraposition. Recall that a free resolution of $R / \rain_< (J_\Delta)$ may be obtained by an iterated trimming complex taking the form of a mapping cone of a morphism of complexes:
\begin{equation*}
\xymatrix{\cdots \ar[r]^{d_{k+1}} &  F_{k} \ar[d]^{Q_{k-1}}\ar[r]^{d_{k}} & \cdots \ar[r]^{d_3} & F_2 \ar[rrrr]^{d_2'} \ar[d]^{Q_1} &&&& F_1' \ar[d]^{d_1} \\
\cdots \ar[r]^-{\bigoplus m^i_k} & \bigoplus_{i=1}^t K^i_{k-1} \ar[r]^-{\bigoplus m^i_{k-1}} & \cdots \ar[r]^-{\bigoplus m^i_2} & \bigoplus_{i=1}^t K^i_1 \ar[rrrr]^-{-\sum_{i=1}^t m^i_1(-)\cdot d_1(e^i_0)} &&&& R. \\}\end{equation*}
In the above, the complex on the top row is built from the minimal free resolution of $R / \inn_< I_n (M)$, which has length $m-n+1$. The bottom row is a direct sum of the minimal free resolutions of $(\rain_< (J_\Delta) : g)$ for each $g \in \rain_< (J_{\Delta^\vee})$; for complete details on this construction see \cite{vandebogert2020trimming}. The minimal free resolution will be linear if and only if the comparison maps $Q_i$ are surjective in each homological degree. If $\grade (\rain_< (J_\Delta) : g) > m-n$ for some $g \in \rain_< (J_{\Delta^\vee})$, however, then $Q_{m-n+1}$ will not be surjective because $F_{m-n+2} = 0$.

$\impliedby:$ The assumption $\grade (\rain_< (J_\Delta) : g) = m-n$ for all $g \in \rain_< (J_{\Delta^\vee})$ implies that each $g \in \Delta^\vee$ is a free vertex in $\po (E_\bullet^<)$. The conclusion follows from Corollary \ref{cor:freeSeqisLinear}.
\end{proof}

\begin{cor}\label{cor:Btab}
    Adopt notation and hypotheses as in Setup \ref{set:rainDFIsetup}. Then $R / \rain_< (J_\Delta)$ has Betti table
    \begin{equation*}\begin{tabular}{C|C C C C C C C C C}
     & 0 & 1 & \cdots &  \ell & \cdots & m-n+1  \\
     \hline 
   0  & 1 & \cdot & \cdots &  \cdot &\cdots &  \cdot \\
   
   \vdots & \cdot & \cdot & \cdots & \cdot & \cdots & \cdot \\
   
   n-1 & \cdot & \binom{m}{n}-r & \cdots & \binom{n+\ell-2}{\ell-1} \binom{m}{n+\ell-1} -r\binom{m-n}{\ell-1} & \cdots &  \binom{m-1}{m-n} - r .\\
\end{tabular}
\end{equation*}
\end{cor}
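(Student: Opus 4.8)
The plan is to turn the statement into a face-counting problem on the regular CW complex carrying the minimal free resolution of $R/\rain_<(J_\Delta)$. First I would record the structural input supplied by the earlier results: under Setup \ref{set:rainDFIsetup}, condition $(*)$ makes each facet of $\Delta^\vee$ a free vertex of $\po(E_\bullet^<)$ with $\grade(\rain_<(J_\Delta):g)=m-n$, so by Corollary \ref{cor:freeSeqisLinear} the resolution is linear and by Theorem \ref{thm:disBnosCell} (equivalently, by iterating the free-vertex deletion of Theorem \ref{thm:cellularityMainRes}) it is supported on the induced subcomplex $\cP'$ of $\po(E_\bullet^<)$ obtained by deleting the $r:=|\Delta^\vee|$ vertices corresponding to the facets of $\Delta^\vee$. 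Linearity forces $\beta_{\ell,\,n+\ell-1}(R/\rain_<(J_\Delta))=f_{\ell-1}(\cP')$ for $\ell\geq 1$ and kills all off-diagonal entries; this already accounts for the single unit in position $(0,0)$ and the concentration of the table in row $n-1$.

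Next I would compute the two face counts whose difference produces each entry. The ambient count comes from the Eagon--Northcott complex directly: the term of $E_\bullet^<$ in homological degree $\ell$ is $D_{\ell-1}((R^n)^*)\otimes\wedge^{n+\ell-1}R^m$, so $f_{\ell-1}(\po(E_\bullet^<))=\beta_\ell(R/\inn_< I_n(M))=\binom{n+\ell-2}{\ell-1}\binom{m}{n+\ell-1}$. It then remains to count the $(\ell-1)$-faces destroyed by deleting the $r$ free vertices, i.e. the faces of $\po(E_\bullet^<)$ containing at least one facet of $\Delta^\vee$.

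The heart of the argument is the local structure at a free vertex $v_i$. Since $v_i$ lies in a unique facet $P_i$, every face through $v_i$ is a subface of $P_i$; as $v_i$ has exactly $m-n$ neighbours (from $\grade=m-n$) and every linear syzygy at $v_i$ is an edge of $\cP$ lying inside $P_i$, all of $N_{v_i}$ occurs in $P_i$, whence $\dim P_i=m-n$ by Lemma \ref{lem:maxlSupp}. The map $Q_{\ell-1}$ of Lemma \ref{lem:theQmap} then identifies the $(\ell-1)$-faces supported on $v_i$ with the $(\ell-1)$-subsets of $N_{v_i}$: the star of $v_i$ is the simplex on $\{v_i\}\cup N_{v_i}$ resolved by the Koszul complex on $(x_{v'})_{v'\in N_{v_i}}$, so exactly $\binom{m-n}{\ell-1}$ faces of dimension $\ell-1$ are removed. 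Finally, hypothesis $(*)$ forces these stars to be pairwise disjoint: a face $F$ containing two distinct facets $v_i\neq v_j$ of $\Delta^\vee$ would lie in $P_i$, and by the simplicial structure of the star $v_j$ would have to be a neighbour of $v_i$, i.e. $\inn_<([\sigma_i])$ and $\inn_<([\sigma_j])$ would share $n-1$ variables, forcing $|\sigma_i\cap\sigma_j|=n-1$ and contradicting $(*)$. Hence precisely $r\binom{m-n}{\ell-1}$ faces of dimension $\ell-1$ disappear, giving $\beta_{\ell,\,n+\ell-1}=\binom{n+\ell-2}{\ell-1}\binom{m}{n+\ell-1}-r\binom{m-n}{\ell-1}$, and substituting $\ell=1$ and $\ell=m-n+1$ recovers the boundary entries $\binom{m}{n}-r$ and $\binom{m-1}{m-n}-r$.

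I expect the main obstacle to be making rigorous the local simplicial description of the star of a free vertex --- that the faces through $v_i$ biject with subsets of $N_{v_i}$ and that membership of a second $\Delta^\vee$-vertex in a common face forces genuine adjacency --- since this is exactly the step that converts the combinatorial overlap hypothesis $(*)$ into disjointness of the deleted stars. Once this is in hand, the face bookkeeping and the endpoint verifications are routine.
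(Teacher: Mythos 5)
Your route---computing the Betti numbers by counting cells of $\po(E_\bullet^<)$ that survive deletion of the vertices indexed by $\Delta^\vee$---is genuinely different from the paper's, which never counts faces: the paper uses Corollary \ref{cor:varDiffsRegular} and Proposition \ref{prop:stdExercise} to specialize along Boocher's regular sequence of variable differences and then quotes the Betti table of the resulting squarefree ideal from \cite[Corollary 5.12]{vdb2020maxlideal}. However, your argument has a genuine gap at its first step: the claim that condition $(*)$ of Setup \ref{set:rainDFIsetup} alone forces each facet of $\Delta^\vee$ to be a free vertex of $\po(E_\bullet^<)$ with $\grade(\rain_<(J_\Delta):g)=m-n$. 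Freeness depends on the term order, not only on the combinatorics of $\Delta^\vee$. Take $n=2$, $m=4$, the term order given by the weights $(0,6,2,5)$ on the first row and $(6,0,2,4)$ on the second, so that $\inn_< I_2(M)=(x_{12}x_{21},\,x_{13}x_{21},\,x_{14}x_{21},\,x_{12}x_{23},\,x_{12}x_{24},\,x_{14}x_{23})$ (this is the left-hand complex pictured in Section \ref{sec:latticesAndCell}), and let $\Delta^\vee=\{\{1,2\},\{3,4\}\}$, which satisfies $(*)$. Then $\rain_<(J_\Delta)=x_{21}(x_{13},x_{14})+x_{12}(x_{23},x_{24})$, the colon $(\rain_<(J_\Delta):x_{12}x_{21})=(x_{13},x_{14},x_{23},x_{24})$ has grade $4\neq 2=m-n$, and indeed the vertex $x_{12}x_{21}$ lies in all three $2$-cells of that complex. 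Since the two summands of $\rain_<(J_\Delta)$ involve disjoint sets of variables, its Betti numbers are $(1,4,6,4,1)$ with $\beta_{2,4}=4\neq 0$: the resolution is not even linear, so the asserted table fails here. To be fair, the Corollary as printed carries the same defect---the paper's one-line proof invokes the \emph{equivalence} of Corollary \ref{cor:varDiffsRegular} and hence tacitly assumes the grade condition of Theorem \ref{thm:linearResCriterion}, which the worked example of Section \ref{sec:polarizationsandConc} verifies by hand---but where the paper has an implicit standing hypothesis, you \emph{derive} that hypothesis from $(*)$, and the derivation is false; without it, your linearity claim, the identification of the supporting subcomplex, and the entire face count collapse.

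Even granting the free-vertex hypothesis, your local analysis at a free vertex has the hole you yourself flag. Injectivity of $P\mapsto V(P)\cap N_{v_i}$ on faces through $v_i$ does follow from Lemma \ref{lem:maxlSupp} via the intersection argument in the proof of Theorem \ref{thm:cellularityMainRes}, but surjectivity---that \emph{every} $(\ell-1)$-subset of $N_{v_i}$ is the vertex support of an actual face through $v_i$, so that exactly $\binom{m-n}{\ell-1}$ cells are removed---is nowhere proved; cells of $\po(E_\bullet^<)$ need not be simplices (these complexes contain quadrilateral $2$-cells), so ``the star of $v_i$ is the simplex on $\{v_i\}\cup N_{v_i}$'' cannot be read off from regularity of the CW structure, and your disjointness-of-stars argument leans on the same unproved description. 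Both issues disappear if you trade the face count for homological bookkeeping: taking the free-vertex hypothesis in the form the paper actually uses it, namely that $(J:g)=(x_{v'}\mid v'\in N_v)$ is a complete intersection of $m-n$ distinct variables (distinctness is Lemma \ref{lem:linConn}), the graded long exact sequence of $\operatorname{Tor}$ associated to $0\to (R/(J:g))(-n)\to R/J\to R/(J+(g))\to 0$, combined with linearity of $R/J$ from Corollary \ref{cor:freeSeqisLinear} and the Koszul resolution of $R/(J:g)$, yields $\beta_\ell(R/J)=\beta_\ell(R/(J+(g)))-\binom{m-n}{\ell-1}$; induction over the $r$ deleted vertices (pairwise non-adjacent by $(*)$, so the colons are unaffected along the way) then produces the table. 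With that repair your argument becomes correct under the corrected hypotheses, and it remains an intrinsic CW/colon-ideal proof, distinct from the paper's specialization argument.
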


\begin{proof}
This follows from Corollary \ref{cor:varDiffsRegular} and Proposition \ref{prop:stdExercise} combined with the Betti tables given in \cite[Corollary 5.12]{vdb2020maxlideal}.
\end{proof}

\section{Applications to Polarizations}\label{sec:polarizationsandConc}

In this section, we consider applications of the machinery and results produced in the previous sections to polarizations of Artinian monomial ideals. We recall the result of Almousa, Fl\o ystad, and Lohne that shows Alexander duality provides an equivalence between rainbow monomial ideals and polarizations of Artinian monomial ideals. This immediately implies Proposition \ref{prop:polarizationProp} and Corollary \ref{cor:polarizationsCor}. Furthermore, Corollary \ref{cor:polarizationsCor} shows that any rainbow DFI $J_\Delta$ with $\Delta^\vee$ consisting only of free vertices is a polarization of a power of the maximal ideal if and only if $\rain_< J_\Delta = \inn_< I_n (M)$. We conclude with a variety of questions related to the results of this paper. 

We begin with the definition of an Alexander dual of a monomial ideal.

\begin{definition}
    Let $I$ be a monomial ideal. The Alexander dual $I^\vee$ is the ideal with generators consisting of all monomials with nontrivial common divisor with every generator of $I$.
    \end{definition}
    
    \begin{example}
    Let $I = \left({x}_{1}{x}_{2},{x}_{1}{x}_{3},{x}_{2}{x}_{3},{x}_{1}{x}_{4},{x}_{2}{
       x}_{4},{x}_{3}{x}_{4}\right)$. Then one computes:
       $$I^ \vee = \left({x}_{1}{x}_{2}{x}_{3},{x}_{1}{x}_{2}{x}_{4},{x}_{1}{x}_{3}{x
       }_{4},{x}_{2}{x}_{3}{x}_{4}\right).$$
    \end{example}
    
Next, we give the definition of a polarization. As previously mentioned, the intuition behind polarization is to replace an ideal with a squarefree ideal that is homologically indistinguishable. The existence of polarizations translates the problem of minimal free resolutions of arbitrary monomial ideals into the problem of minimal free resolutions of \emph{squarefree} monomial ideals.

\begin{definition}
    Let $I$ be an Artinian monomial ideal in the polynomial ring $S = k[x_1,\ldots, x_n]$, such that for every index $i$, $x_i^{m_i}$ is a minimal generator. Let $\check{X}_i = \{x_{i1},x_{i2},\ldots, x_{i m_i}\}$ be a set of variables, and let $\tilde S = k[\check{X}_1,\ldots \check{X}_n]$ be the polynomial ring in the union of these variables. An ideal $\tilde I\subset \tilde S$ is a \emph{polarization} of $I$ if
\begin{align*}
\sigma = \{x_{11}-x_{12}, x_{11}-x_{13},\ldots,x_{11}-x_{1m_1}\} \cup \{x_{21}-x_{22},\ldots,x_{21}-x_{2m_2}\}\cup \ldots \\
\cup \{x_{n1}-x_{n2},\ldots, x_{n1}-x_{n,m_n}\}
\end{align*}
is a regular sequence in $\tilde S / \tilde I$ and $\tilde I \otimes \tilde S / \sigma \cong I$.
    \end{definition}

\begin{prop}[{\cite{polarizations}}]\label{prop:polarizationProp}
    Let $J$ be an ideal generated by rainbow monomials with linear resolution (with all variables in the ambient ring occurring in some generator of $J$). Then $J^\vee$ is a polarization of an Artinian monomial ideal.
    \end{prop}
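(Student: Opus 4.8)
The plan is to exhibit $J^\vee$ as the polarization of a concrete Artinian ideal and verify the two defining properties directly. Grouping the variables of the ambient ring of $J$ into color classes $\check X_i = \{x_{i1},\dots,x_{im_i}\}$ ($i=1,\dots,n$), each rainbow generator of $J$ uses exactly one variable from each color, so $J$ is equigenerated in degree $n$. Let $\sigma$ be the sequence of variable differences $x_{i1}-x_{ij}$, and let $I$ be the image of $J^\vee$ under the de-polarization homomorphism $x_{ij}\mapsto x_i$ into $S=k[x_1,\dots,x_n]$, so that $\tilde S/(J^\vee+\sigma)\cong S/I$ by construction. To conclude that $J^\vee$ is a polarization of $I$ it then suffices to show that $I$ is Artinian with each $x_i^{m_i}$ a minimal generator, and that $\sigma$ is a regular sequence on $\tilde S/J^\vee$.

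First I would check that $I$ is Artinian. For each color $i$, the monomial $\prod_{j=1}^{m_i}x_{ij}$ shares a variable with every generator $m$ of $J$ --- namely the unique color-$i$ variable occurring in the rainbow monomial $m$ --- and hence lies in $J^\vee$. The de-polarization map sends this monomial to $x_i^{m_i}$, so $x_i^{m_i}\in I$ for every $i$; thus $\sqrt I=(x_1,\dots,x_n)$ and $S/I$ has dimension $0$.

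Next I would prove that $\sigma$ is regular. Since $J$ has a linear minimal free resolution, the Eagon--Reiner theorem gives that $\tilde S/J^\vee$ is Cohen--Macaulay. The minimal primes of $J^\vee$ are the ideals generated by the variables of the generators of $J$, each of height $n$, so $\dim \tilde S/J^\vee = \sum_i m_i - n = |\sigma|$. Combined with the Artinian-ness of $\tilde S/(J^\vee+\sigma)\cong S/I$ established above, this shows that the linear forms of $\sigma$ form a homogeneous system of parameters. Because every system of parameters in a Cohen--Macaulay graded ring is a regular sequence, $\sigma$ is regular on $\tilde S/J^\vee$, which is the remaining defining property of a polarization.

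Finally, minimality of the generators $x_i^{m_i}$ follows once regularity of $\sigma$ is known: by Proposition \ref{prop:stdExercise} the minimal free resolution of $S/I$ is obtained by tensoring that of $\tilde S/J^\vee$ with $\tilde S/\sigma$, so minimal generators are preserved, and $\prod_j x_{ij}$ is itself a minimal generator of $J^\vee$ because the hypothesis that every variable occurs in some generator prevents dropping any $x_{ik}$ (the resulting monomial, supported only in color $i$, shares no variable with a generator using $x_{ik}$, since that generator uses no other color-$i$ variable). The main obstacle is the regularity of the very specific, non-generic sequence $\sigma$, for which one cannot simply appeal to generic reduction; the resolution is to decouple it into a purely combinatorial claim that $\sigma$ is a system of parameters and the homological input from Eagon--Reiner that in a Cohen--Macaulay ring any system of parameters is regular. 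A secondary subtlety is confirming that the rainbow condition forces degree exactly $n$, so that the height of $J^\vee$ equals the length of $\sigma$ and $\sigma$ is a maximal rather than a partial regular sequence.
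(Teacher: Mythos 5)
Your proof is correct. Note that the paper itself gives no argument for this statement; it is quoted directly from the cited work of Almousa, Fl\o ystad, and Lohne, and your reconstruction follows essentially the same route as that source: Eagon--Reiner converts the linear resolution hypothesis into Cohen--Macaulayness of $\tilde S/J^\vee$, the height count ($\operatorname{ht} J^\vee = n$ since every minimal prime comes from a rainbow generator) shows the variable differences form a homogeneous system of parameters once the Artinian quotient is exhibited via the monomials $\prod_j x_{ij}$, and regularity then follows from the fact that an h.s.o.p.\ in a graded Cohen--Macaulay ring is a regular sequence. Your closing argument that full support of $J$ forces each $\prod_j x_{ij}$ to be a \emph{minimal} generator of $J^\vee$ (so that $x_i^{m_i}$ is a minimal generator of the depolarized ideal, as the definition of polarization requires) is exactly the point where the full-support hypothesis enters, and you have handled it correctly.
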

    
\begin{cor}
    Let $\Delta$ be an $(n-1)$-dimensional simplicial complex such that $\Delta^\vee$ admits an ordering for which it is a free sequence of $\po (E_\bullet^<)$. Then $(\rain_< (J_\Delta))^\vee$ is the polarization of an Artinian monomial ideal.
\end{cor}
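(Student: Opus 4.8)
The plan is to apply Proposition \ref{prop:polarizationProp} to the rainbow monomial ideal $J := \rain_< (J_\Delta)$, after verifying its hypotheses. The free-sequence hypothesis in the statement is exactly the one appearing in Corollary \ref{cor:freeSeqisLinear}, so that result immediately gives that $J$ has a linear minimal free resolution (in fact a CW one). It then remains only to check the two standing assumptions of Proposition \ref{prop:polarizationProp}: that $J$ is generated by rainbow monomials, and that every variable of the ambient ring divides some minimal generator of $J$.

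For the first assumption I would invoke the definition of the rainbow DFI (Definition \ref{def: DFI}): every minimal generator of $J$ has the form $\inn_< ([\aa])$ for a facet $\aa$ of $\Delta$, and since $[\aa]$ is the determinant of the $n \times n$ submatrix on all rows and the columns $\aa$, its leading term is $\pm \prod_{i=1}^n x_{i\tau(i)}$ for a bijection $\tau$ from the rows to the columns of $\aa$. Each generator is therefore squarefree and uses exactly one variable from each row, so partitioning the variables $\{x_{ij}\}$ into the $n$ color classes indexed by rows realizes every generator of $J$ as a rainbow monomial, as required.

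The delicate point, and what I expect to be the main obstacle, is the occurrence hypothesis, which fails verbatim: for instance, if $\Delta$ has a single facet then $J$ is principal and only $n$ of the $nm$ variables $x_{ij}$ appear. I would resolve this by passing to the subring $\tilde R \subseteq R$ generated by precisely those $x_{ij}$ that divide some minimal generator of $J$. Since $J$ is a monomial ideal supported on these variables, its minimal free resolution over $R$ is obtained from its minimal free resolution over $\tilde R$ by base change along $R = \tilde R[\dots]$; hence the graded Betti numbers, and in particular linearity, are unchanged, and the row partition of $\tilde R$ still presents every generator as a rainbow monomial (now with possibly fewer columns per class). Over $\tilde R$, every variable divides a generator by construction, so all hypotheses of Proposition \ref{prop:polarizationProp} now hold.

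Finally, applying Proposition \ref{prop:polarizationProp} to $J$ viewed inside $\tilde R$ yields that $J^\vee = (\rain_< (J_\Delta))^\vee$, with Alexander dual taken in $\tilde R$, is a polarization of an Artinian monomial ideal. This is the desired conclusion.
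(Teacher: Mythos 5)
Your proof is correct and follows essentially the same route as the paper, whose entire proof is the single line ``Combine Proposition \ref{prop:polarizationProp} with Corollary \ref{cor:freeSeqisLinear}.'' Your extra care with the parenthetical hypothesis of Proposition \ref{prop:polarizationProp} that every ambient variable occur in some generator---which indeed fails verbatim in $R$ (for instance $x_{1m}$ and $x_{n1}$ divide no generator of $\inn_< I_n(M)$ for the diagonal order, so the issue arises even when $\Delta^\vee = \emptyset$)---addresses a point the paper silently glosses over, and your reduction to the subring $\tilde R$ of variables actually occurring, with linearity preserved under the flat extension $\tilde R \subseteq R$ and the Alexander dual having the same minimal generators in either ring, handles it correctly.
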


\begin{proof}
Combine Proposition \ref{prop:polarizationProp} with Corollary \ref{cor:freeSeqisLinear}.
\end{proof}
    
\begin{cor}\label{cor:polarizationsCor}
    Adopt notation and hypotheses as in Setup \ref{set:rainDFIsetup}. Then: 
    \begin{enumerate}
        \item $(\rain_< (J_\Delta))^\vee$ is the polarization of an Artinian monomial ideal if and only if $\grade \big( \rain_< (J_\Delta) : g \big) = m-n$ for all $g \in G(\rain_< (J_{\Delta^\vee}))$, and
        \item in the situation of $(1)$, $(\rain_< (J_\Delta))^\vee$ is a polarization of a power of the graded maximal ideal if and only if $|\Delta^\vee| = 0$ (that is, $\rain_< (J_\Delta) = \inn_< I_n (M)$).
    \end{enumerate}
    \end{cor}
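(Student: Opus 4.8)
The plan is to collapse all the conditions in play onto the single property that $\rain_<(J_\Delta)$ has a linear minimal free resolution, and then to read off part (2) from the explicit structure of the ``full'' case $\rain_<(J_\Delta) = \inn_< I_n(M)$. Throughout I note that $\rain_<(J_\Delta)$ is, by construction, generated by rainbow monomials: each generator $\inn_<([\aa])$ is a product of one variable from each row, hence rainbow with respect to the $n$ row-colors.

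For part (1) I would first establish ``$\Leftarrow$'': passing to the polynomial subring on the variables that actually occur, the hypotheses of Proposition \ref{prop:polarizationProp} are met as soon as $\rain_<(J_\Delta)$ has a linear resolution; and the grade condition forces linearity by Theorem \ref{thm:linearResCriterion}, so $(\rain_<(J_\Delta))^\vee$ is a polarization of an Artinian monomial ideal. For the converse ``$\Rightarrow$'' I would run the Alexander-duality equivalence of \cite{polarizations} in reverse: if $(\rain_<(J_\Delta))^\vee$ is a polarization of an Artinian ideal $I$, then modding out by the regular sequence $\sigma$ recovers the Artinian (hence Cohen--Macaulay) quotient, so the polarized quotient is itself Cohen--Macaulay; by the Eagon--Reiner theorem its Alexander dual $\rain_<(J_\Delta)$ has a linear resolution, and Theorem \ref{thm:linearResCriterion} returns the grade condition. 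This yields the chain: grade condition $\iff$ linear resolution $\iff$ dual is a polarization.

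For part (2) I may now assume the equivalent conditions of (1), so $(\rain_<(J_\Delta))^\vee$ is a polarization of some Artinian $I$ in $k[y_1,\dots,y_n]$, where $I$ is the image under the \emph{degree-preserving} de-polarization $x_{ij}\mapsto y_i$. The ``$\Leftarrow$'' direction is the full case: when $|\Delta^\vee|=0$ the Observation that $\rain_<(J_\Delta)+\rain_<(J_{\Delta^\vee})=\inn_< I_n(M)$ gives $\rain_<(J_\Delta)=\inn_< I_n(M)$, and the Almousa--Fl\o ystad--Lohne description of polarizations of powers of the maximal ideal \cite{polarizations} identifies $(\inn_< I_n(M))^\vee$ as a polarization of $\m^{m-n+1}$; one verifies directly, exactly as in the $2\times 4$ example, that de-polarizing $(\inn_< I_n(M))^\vee$ returns all degree-$(m-n+1)$ monomials in the $y_i$. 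For ``$\Rightarrow$'' I would argue by contrapositive: if $|\Delta^\vee|>0$ then $\rain_<(J_\Delta)\subsetneq \inn_< I_n(M)$ has strictly fewer than $\binom{m}{n}$ generators. Since every polarization of a power of $\m$ is, by \cite{polarizations}, the Alexander dual of a full $\inn_{<'}I_n(M)$ (which has exactly $\binom m n$ rainbow generators), and since Alexander duality is injective, $I=\m^d$ would force $\rain_<(J_\Delta)=\inn_{<'}I_n(M)$ and hence $\binom m n$ generators, a contradiction. Equivalently, since de-polarization preserves generator degrees, a proper subideal $\rain_<(J_\Delta)$ yields a dual with a minimal generator of degree $<m-n+1$, so $I$ fails to be generated in a single degree and cannot be a pure power.

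The main obstacle I anticipate is the ``$\Leftarrow$'' half of part (2): pinning down that the de-polarization of $(\inn_< I_n(M))^\vee$ is \emph{exactly} $\m^{m-n+1}$ (rather than merely contained in it) for an arbitrary term order $<$, which requires the precise combinatorial description of polarizations of powers of $\m$ from \cite{polarizations} together with careful tracking of which variables survive in the support. By comparison, the converse half of part (1) is routine once Cohen--Macaulayness of a polarization and the Eagon--Reiner theorem are invoked.
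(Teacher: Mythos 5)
Your part (1) is correct and is essentially the paper's argument: the paper compresses it to ``immediate from Theorem \ref{thm:linearResCriterion},'' with the Almousa--Fl{\o}ystad--Lohne duality playing the role of your Cohen--Macaulayness-of-polarizations plus Eagon--Reiner step. Your ``$\Leftarrow$'' half of part (2) is also at the same level of rigor as the paper, which likewise treats the identification of $(\inn_< I_n(M))^\vee$ as a polarization of $\m^{m-n+1}$ as known from \cite{polarizations}.

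The genuine gap is in your ``$\Rightarrow$'' half of part (2), in both routes you offer. First, the assertion that \emph{every} polarization of a power of $\m$ is the Alexander dual of a full initial ideal $\inn_{<'} I_n(M)$ is not a result of \cite{polarizations}, and you do not prove it. What \cite{polarizations} gives is that duals of polarizations of Artinian ideals are rainbow monomial ideals with linear resolution; for a power of $\m$ this is a rainbow ideal with one generator per $n$-subset of columns, but such an ideal need not be an initial ideal of $I_n(M)$ --- that is a coherence condition in the sense of \cite{SturmfelsZelevinsky93}, and nothing cited excludes non-coherent examples (indeed the paper's closing questions treat the relationship between polarizations and initial ideals as open). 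So the duality-injectivity/generator-count argument has no foundation. Second, the fallback argument is gapped twice: (i) proper containment $\rain_<(J_\Delta) \subsetneq \inn_< I_n(M)$ does \emph{not} by itself force $(\rain_<(J_\Delta))^\vee$ to have a minimal generator of degree $<m-n+1$; dualizing reverses inclusions, but a strictly larger squarefree ideal can be generated entirely in degree $m-n+1$ (there are degree-$(m-n+1)$ squarefree monomials outside $(\inn_< I_n(M))^\vee$), so you must actually exhibit a minimal prime of $\rain_<(J_\Delta)$ of height $\leq m-n$; and (ii) even granting such a generator, ``fails to be generated in a single degree'' does not follow: a priori $I$ could equal $\m^d$ for some $d\leq m-n$, which \emph{is} equigenerated, so ``pure power'' is not yet excluded.

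The paper closes exactly these gaps by a different reduction. If $I$ were any power of $\m$, then $I$ has linear resolution, hence so does $(\rain_<(J_\Delta))^\vee$ (polarization preserves graded Betti numbers), hence $R/\rain_<(J_\Delta)$ is Cohen--Macaulay by Eagon--Reiner, hence so is the quotient by the specialized ideal $K$ with $G(\overline{K}) = \{x_\aa \mid \aa \in \Delta^\vee\}$, since the variable differences form a regular sequence in the situation of (1) (Corollary \ref{cor:varDiffsRegular}). The paper then shows $K$ is \emph{not} Cohen--Macaulay when $|\Delta^\vee|>0$: for $\aa \in \Delta^\vee$ the monomial $x_{[m]\setminus \aa}$ is a minimal generator of $K^\vee$ of degree $m-n$, while $\operatorname{reg}(K^\vee) = \pd (S/K) = m-n+1$ by Terai's formula and Corollary \ref{cor:Btab}, so $K^\vee$ has no linear resolution, i.e.\ $K$ is not Cohen--Macaulay. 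Some version of this regularity computation (equivalently, of the fact that an Artinian ideal with linear resolution must be a power of the maximal ideal) is the missing ingredient in your proposal.
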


\begin{proof}
The statement of $(1)$ is immediate from Theorem \ref{thm:linearResCriterion}. To prove $(2)$, it suffices to show that if $|\Delta^\vee| >0$, then the monomial ideal $K$ with $G(\overline{K}) = \{ x_\aa \mid \aa \in \Delta^\vee \}$ is not Cohen-Macaulay. That is, $K^\vee$ does not have linear resolution. Let $x_\aa = x_{a_1} \cdots x_{a_n} \in G(\overline{K})$ and write $[m] \backslash \aa = \{ b_1 < \cdots < b_{m-n} \}$. It is clear that $x_{b_1} \cdots x_{b_{m-n}} \in K^\vee$. By Corollary \ref{cor:Btab}, $R/K^\vee$ has regularity $m-n+1$, whence $R/K^\vee$ does not have linear resolution.
\end{proof}

The following illustrates many of the results in this paper in a simple example.

\begin{example}
Let $<$ denote the standard diagonal term order, with $n=3$ and $m=5$. Assume that $\Delta^\vee = \{ (1,2,3) , (3,4,5) \}$. The associated cellular resolution is precisely the complex of boxes introduced by Nagel and Reiner. One can verify:
$$\rain_< (J_\Delta) : x_{11} x_{22} x_{33} = (x_{34} , x_{35}),   \quad \rain_< (J_\Delta) : x_{13} x_{24} x_{35} = (x_{11} , x_{12});$$
this shows that $(1,2,3)$ and $(3,4,5)$ are free vertices in the complex of boxes. As guaranteed by Corollary \ref{cor:Btab}, $R/\rain_< (J_\Delta)$ has Betti table
$$\begin{matrix}
       &0&1&2&3\\\text{total:}&1&8&11&4\\\text{0:}&1&\text{.}&\text{.}&\text{.}\\\text{1:}&\text
       {.}&\text{.}&\text{.}&\text{.}\\\text{2:}&\text{.}&8&11&4\\\end{matrix}$$
and one can verify using Macaulay2 \cite{M2} that $(\rain_< (J_\Delta))^\vee$ is a polarization of
$$(x_1^2 , x_1x_2^2 , x_2^3 , x_1 x_2 x_3 , x_2^2 x_3 , x_3^2).$$
\end{example}    

\begin{question}\label{question:euqivForFreeSeq}
Is Corollary \ref{cor:freeSeqisLinear} an equivalence? That is, if a rainbow DFI $\rain_< (J_\Delta)$ has linear resolution, then do the elements of $\Delta^\vee$ admit an ordering for which they are a free sequence in $\po (E_\bullet^<)$?
\end{question}

Question \ref{question:euqivForFreeSeq}, if answered in the affirmative, yields an interesting connection between polarizations and free sequences on the CW complex $\po (E_\bullet^<)$. Moreover, using the combinatorics of the initial ideals provided by Sturmfels and Zelevinsky \cite{SturmfelsZelevinsky93}, it may be possible to explicitly enumerate all free sequences on $\po (E_\bullet^<)$, thus constructing all possible polarizations with Alexander dual contained in $\inn_< I_n (M)$ for a given term order.

\begin{question}\label{question:sqfreeCellular}
Let $\Delta$ be an $(n-1)$-dimensional simplicial complex with the property that $|\sigma \cap \tau | < n-1$ for all $\sigma, \tau \in \Delta^\vee$. Does there exist a term order $<$ on $R$ such that every facet of $\Delta^\vee$ corresponds to a free vertex of $\po (E_\bullet^<)$?
\end{question}

A positive answer to Question \ref{question:sqfreeCellular} would immediately yield that a large class of equigenerated monomial ideals (considered in \cite{vdb2020maxlideal}) has CW resolution, since one can employ Corollary \ref{cor:varDiffsRegular} to specialize. More precisely, to the authors knowledge, it is not known if every equigenerated squarefree monomial ideal whose squarefree complement has no linear relations must have CW resolution.

\begin{question}\label{question:isitDG}
Let $<$ be any term order. Then does the sparse Eagon-Northcott complex admit the structure of an associative DG-algebra? If so, then how does this product compare to the product on the squarefree Eliahou-Kervaire resolution constructed by Peeva \cite{peeva19960} upon specialization?
\end{question}

The first part of Question \ref{question:isitDG} is likely not as difficult to answer as the second part. It would be interesting to see how multigraded DG-products on $E_\bullet^<$ specialize to give DG-products on the resolution of all squarefree monomial ideals of a given degree. It would be even \emph{more} interesting if these products were all distinct. If so, this would also yield a fascinating correspondence between choices of initial ideals of $\inn_< (M)$ and choices of algebra structures on the minimal free resolution of all squarefree monomials of degree $n$ in $m$ variables. Up to taking linear combinations, this may provide a method of parametrizing all such multigraded products.

\section*{Acknowledgements}

Thanks to Ayah Almousa for very helpful discussions related to this work. Thanks to Paolo Mantero for helpful comments and suggestions on an earlier draft of this paper.

\bibliographystyle{amsplain}
\bibliography{biblio}
\addcontentsline{toc}{section}{Bibliography}

\end{document}